\newcommand\N{\mathbb{N}}
\newcommand\R{\mathbb{R}}
\newcommand\Sc{\mathcal{S}}
\newcommand\E{\mathbb{E}}
\newcommand\X{\mathcal{D}}
\newcommand\Lc{\mathcal{L}}
\newcommand\dagg{^{\dagger}}
\newcommand\Cinfty{\mathcal{C}^{\infty}}
\newcommand\intX{\int_{\X}}
\newcommand\A{\mathcal{A}}
\newcommand\At{\mathcal{A}^W}
\renewcommand\O{\mathrm{O}}
\renewcommand\dag{^\dagger}
\renewcommand\S{\mathcal{S}}
\newcommand\Sw{\S_W}
\newcommand\Rm{\mathcal{R}}
\newcommand\Qdt{Q_{\Dt}}
\newcommand\Tdt{T_{\Dt}}
\newcommand\Udt{U_{\Dt}}
\newcommand\Qtdt{\widetilde{Q}_{\Dt}}
\newcommand\Id{\mathrm{Id}}
\newcommand\K{\mathcal{K}}
\newcommand\nuw{\nu_W}
\newcommand\hw{h_W}
\newcommand\nuwdt{\nu_{W,\Dt}}
\newcommand\nuwtdt{\tilde{\nu}_{W,\Dt}}
\newcommand\Dt{\Delta t}
\newcommand\e{\mathrm{e}}
\newcommand\PX{\mathcal{P}(\X)}
\newcommand\muinfty{\mu_{\infty}}
\newcommand\lambdadt{\lambda_{\Dt}}
\newcommand\lambdatdt{\tilde{\lambda}_{\Dt}}
\newcommand\C{C} %\mathcal{C}} change to homogenize with C^0
\newcommand\Linfty{B^{\infty}(\X)}
\newcommand\hwdt{\hat{h}_{W,\Dt}}
\newcommand\TV{_{\mathrm{TV}}}
\newcommand\MX{\mathcal{M}(\X)}
\newcommand\Mz{\mathcal{M}_0(\X)}
\newcommand\Nit{N_{\mathrm{iter}}}
\newcommand\hh{\hat{h}_W}
\newcommand\nuhw{\hat{\nu}_W}
\newcommand\Shw{\hat{\mathcal{S}}_W}
\newcommand\ic{\mathrm{i}}
\newcommand\chidt{\chi_{\Dt}}
\newcommand{\vertiii}[1]{{\left\vert\kern-0.25ex\left\vert\kern-0.25ex\left\vert #1 
    \right\vert\kern-0.25ex\right\vert\kern-0.25ex\right\vert}}
\DeclarePairedDelimiter\floor{\lfloor}{\rfloor}
\renewcommand{\leq}{\leqslant}
\renewcommand{\geq}{\geqslant}
\newcommand{\dps}{\displaystyle}
\newtheorem{theorem}{Theorem}
\newtheorem{lemma}{Lemma}
\newtheorem{assumption}{Assumption}
\newtheorem{prop}{Proposition}
\newtheorem{corollary}{Corollary}
\newtheorem{remark}{Remark}
\begin{document}

\title{Error estimates on ergodic properties of discretized Feynman--Kac semigroups}
\author{Grégoire Ferré and Gabriel Stoltz\\
\small Université Paris-Est, CERMICS (ENPC), Inria, F-77455 Marne-la-Vallée, France \\
}

\date{\today}

\maketitle

%journal (attention aux limitations de pages...) : BIT, Ima J Numer Anal, SIAM J Numer Anal, Numer Math, Math Comput, Found Comput Math 

\abstract{
We consider the numerical analysis of the time discretization of Feynman--Kac semigroups associated with diffusion processes. These semigroups naturally appear in several fields, such as large deviation theory, Diffusion Monte Carlo or non-linear filtering. We present error estimates à la Talay--Tubaro on their invariant measures when the underlying continuous stochastic differential equation is discretized; as well as on the leading eigenvalue of the generator of the dynamics, which corresponds to the rate of creation of probability. This provides criteria to construct efficient integration schemes of Feynman--Kac dynamics, as well as a mathematical justification of numerical results already observed in the Diffusion Monte Carlo community. Our analysis is illustrated by numerical simulations.
}

\section{Introduction}

The study of Feynman--Kac semigroups for stochastic differential equations (SDEs) has been a topic of growing importance in the past two decades, since these dynamics are related to several theoretical and applied areas of mathematics. They can be seen as standard SDEs whose paths are reweighted according to the exponential of the time integral of some weight function. 

Feynman--Kac semigroups naturally appear in large deviation theory, where they can be used to enhance the likelihood of observing rare fluctuations and henceforth computing cumulant generating functions~\cite{varadhan1984large,den2008large}. They also have important practical applications, such as in the Diffusion Monte Carlo (DMC) method~\cite{foulkes2001quantum}, which is a probabilistic way of estimating the ground state energy of Schr\"odinger operators; or in computational statistics, in particular in (non-linear) filtering~\cite{del2004feynman,douc2007limit}, where relevant trajectories are selected from observations. 

We focus in this paper on the bias arising from the time discretization of the underlying 
continuous stochastic dynamics and of the time integrated weight. Our interest resides in the 
ergodic properties of the discretization, namely the invariant measure as well as the average 
rate of creation of probability. Let us briefly present our setting and results. We study a 
system $q_t\in \X$ evolving in a $d$-dimensional space, assumed to be compact (the extension to 
unbounded spaces poses non-trivial issues, as discussed at various places later on). Typically, 
$\X = \mathbb{T}^d$ (with $\mathbb{T} = \mathbb{R}\backslash\mathbb{Z}$) is a $d$-dimensional
torus.  For convenience, we consider that the evolution is dictated by a stochastic differential
equation with additive noise:
\begin{equation}
\label{eq:overdamped}
d q_t = b(q_t) \, dt+ \sigma \, d B_t,
\end{equation}
where $b:\X\to \R^{d}$ is a $\Cinfty(\X)$ vector field, $\sigma > 0$ and $B_t$ is a standard 
$d$-dimensional Brownian motion. Note that the dynamics~\eqref{eq:overdamped} may be non-reversible. 
Our results can be extended to dynamics with multiplicative noise upon appropriate modifications. 
The infinitesimal generator of the dynamics~\eqref{eq:overdamped}, defined on the 
core $\S = \Cinfty(\X)$, reads
\begin{equation}
\label{eq:generatoroverdamped}
\Lc =  b \cdot \nabla + \frac{\sigma^2}{2} \Delta,
\end{equation}
and we denote by $\Lc\dag$ the adjoint of $\Lc$ on $L^2(\X)$ endowed with the Lebesgue measure. 
Since $\X$ is compact and $b$ is smooth, \eqref{eq:overdamped} admits a unique invariant distribution, denoted by $\nu$, which is solution to the Fokker--Planck equation 
\[
\Lc\dag \nu =0,
\]
see \textit{e.g.}~\cite{bellet2006ergodic,karatzas2012brownian,lelievre2016partial}. Denoting
by $\PX$ the set of probability measures over $\X$, Feynman--Kac type semigroups associated
with a given weight function $W:\X\to\R$ evolve an initial probability measure $\mu\in\PX$ as
follows: for any test function $\varphi\in\S$,
\begin{equation}
\label{eq:feynmankac}
\Phi_t^W(\mu)(\varphi)=\frac{ \E_{\mu}\left[ \varphi(q_t) \, \e^{ \int_0^t W(q_s)\, ds} \right]}
{\dps \E_{\mu}\left[\e^{ \int_0^t W(q_s)\, ds} \right]},
\end{equation}
where the expectations run over initial conditions~$q_0$ distributed according to~$\mu$ and
all realizations of~\eqref{eq:overdamped}.
The family of mappings $\{\Phi_t^W\}_{t \geq 0}$ is a measure-valued non-linear semigroup in the sense that $\Phi_t^W: \PX\to\PX$ depends non-linearly on the initial condition
and, for all $\mu\in\PX$ and $t,s\in\R_+$, 
$\Phi_t^W(\Phi_s^W(\mu))=\Phi_{t+s}^W(\mu)$. Such semigroups have been studied for
a long time in the context of Diffusion Monte Carlo 
(DMC)~\cite{grimm1971monte,anderson1975random,ceperley1980ground,umrigar1993diffusion,foulkes2001quantum} 
in order to estimate the principal eigenvalue of Schrödinger type operators $-\Delta + W$, 
which correspond in our case to $b\equiv 0$. They also appear in the
large deviation community~\cite{giardina2006direct,lecomte2007numerical, tailleur2008simulation,touchette2009large,nemoto2016population,nemoto2017finite} 
where they are related to the principal eigenvalue of $\Lc +W$, which is the dual of the 
rate function -- a result known as the Donsker-Varadhan 
formula~\cite{donsker1975variational,varadhan1984large,den2008large,touchette2009large,dembo2010large}. 
Other fields such as non-linear filtering, Hidden Markov Models~\cite{jasra2015behaviour,douc2007limit,douc2014long} 
and free energy computation~\cite{Jarzynski97PRL,Jarzynski97PRE,rousset2006equilibrium,lelievre2010free}
also motivate the study of such semigroups. 

As discussed in Section~\ref{sec:continuous}, the semigroup~\eqref{eq:feynmankac} converges in general to the average of $\varphi$ with respect to a tilted measure $\nuw$. More precisely, the operator $\Lc\dagg +W$ has a largest eigenvalue $\lambda$ which is isolated from the remainder of the spectrum and non-degenerate, with associated eigenfunction $\nuw$, and 
\[
\Phi_t^W(\mu)(\varphi)\xrightarrow[t\to\infty]{}\, \intX \varphi\, d\nuw
\]
exponentially fast. We address in this work the time discretization of the semigroup~\eqref{eq:feynmankac} using a finite timestep~$\Dt$. The underlying continuous evolution~\eqref{eq:overdamped} is discretized by a Markov chain $(q^n)_{n \in \N}$ and~\eqref{eq:feynmankac} is approximated as (using a simple quadrature rule for the time integral)
\begin{equation}
\label{eq:typicaldiscr}
\Phi_{\Dt,n}^W(\mu)(\varphi)=
\frac{\dps \E_{\mu}\left[ \varphi(q^n) \, \e^{ \Dt \sum_{i=0}^{n-1}W(q^i)} \right] }
{\dps \E_{\mu}\left[\e^{ \Dt \sum_{i=0}^{n-1}W(q^i)} \right] }.
\end{equation}
Under mild assumptions on the discretization scheme (made precise in Section~\ref{sec:discrete}), the discrete semigroup~\eqref{eq:typicaldiscr} converges to an invariant measure $\nuwdt$ in the following sense: for any test function $\varphi \in \S$,
\[
\Phi_{\Dt,n}^W(\mu)(\varphi)\xrightarrow[n \to +\infty]{}\, \intX \varphi \, d\nuwdt.
\]
The core of our work consists in making precise the difference between $\nuw$ and $\nuwdt$. 
We aim in particular at designing numerical schemes leading to the smallest possible biases. 
Although a series of papers study the statistical error of 
estimators such as~\eqref{eq:typicaldiscr} 
(see~\cite{del2001stability,del2003particle,del2004feynman,rousset2006continuous,rousset2006control}), 
there are, to our knowledge, no available estimates on the bias of the limiting measure with 
respect to~$\Dt$. However, in the context of DMC (where we recall $b=0$), it was numerically 
observed that some discretizations provide first, second or fourth order of convergence 
in~$\Dt$ for the largest eigenvalue~$\lambda$ of $\Lc+W$, see for 
example~\cite{anderson1975random,umrigar1993diffusion,mella2000time,sarsa2002quadratic},
and~\cite{el2007diffusion} for the numerical analysis in a simple case. 
The results presented in this paper provide a mathematical justification of such convergences, 
while extending them to the case $b\neq 0$. Let us also mention that Hairer and Weare have 
studied in~\cite{hairer2014improved,hairer2015brownian} the convergence with respect to the timestep of
discretized dynamics similar to the one we consider, over a finite time and for a finite population of replicas. 
They obtain in the limit $\Dt\to 0$ a limiting process, the so-called Brownian fan.

We rely on the techniques developped since the works of Talay and 
Tubaro~\cite{talay1990expansion,talay1990second}, taking advantage of the analytical tools developed 
in a series of papers~\cite{mattingly2010convergence,debussche2012weak,abdulle2012high,abdulle2014high,bou2010long,leimkuhler2016computation,lelievre2016partial}, in order to 
provide a systematic framework to study the bias in the timestep. More precisely, we show 
in Theorem~\ref{theo:numana} that there exist an integer~$p\geq 1$ and a function~$f$ solution 
to a Poisson equation (both depending on the numerical scheme at hand and the quadrature rule 
for the integral), such that, for all~$\varphi \in \S$,
\begin{equation}
\label{eq:typicalresult}
\intX \varphi\, d\nuwdt = \intX \varphi\, d\nuw + \Dt^p\intX \varphi f\, d\nuw +\O(\Dt^{p+1}).
\end{equation}
This result is very similar to those of weak backward error analysis on invariant probability measures of ergodic processes, see for example~\cite{debussche2012weak,bou2010long,leimkuhler2016computation}. Moreover, as the computation of the principal eigenvalue $\lambda$ of the operator $\Lc +W$ is one of the main concerns in Feynman--Kac techniques, we provide in Theorem~\ref{theo:average} the following error estimate:
\begin{equation}
\label{eq:typicalestimate}
\lambdadt:= \frac{1}{\Dt}\log\left[\intX \Qdt^W\mathds{1}\, d\nuwdt \right]
=\lambda + C\Dt^p + \O(\Dt^{p+1}),
\end{equation}
where $\Qdt^W$ is the evolution operator of the discretized dynamics with weight 
function~$W$. This result is interesting since it allows to justify the use of population 
dynamics methods for discretizations of diffusion processes, 
see~\cite{giardina2006direct,tailleur2008simulation,nemoto2016population} for rare events 
simulations, and~\cite{foulkes2001quantum} for DMC. Let us mention that, while the proof 
of~\eqref{eq:typicalresult} relies on previous works concerning error estimates on the invariant 
measure~\cite{talay1990expansion,talay1990second,bou2010long,debussche2012weak,leimkuhler2016computation}, 
the novelty of this work lies in taking into account the non-probability conserving feature of the dynamics. With this point of view and, odd as it may seem, formula~\eqref{eq:typicalresult} appears as a \emph{consequence} of~\eqref{eq:typicalestimate}, and not conversely. An interpretation of this fact is that, in order to prove an error estimate on the invariant probability measure of this non probability-conserving dynamics, we must first show that the discretized process creates or destroys probability at a rate correct up to terms small in $\Dt$.

\medskip  

The paper is organized as follows. Section~\ref{sec:convergence} is devoted to general properties 
of Feynman--Kac semigroups and their discretizations. We then present in Section~\ref{sec:numana}
our main results concerning the numerical analysis of the error on the invariant probability measure, 
depending on the choice of the discretization scheme, before providing numerical 
applications in Section~\ref{sec:application}. Finally, Section~\ref{sec:extensions} proposes 
possible extensions to this work. The proofs of the most technical results are gathered in Section~\ref{sec:proofs}.

%-------------------------------- Continuous time --------------------------------------
\section{Convergence properties of Feynman--Kac semigroups}
\label{sec:convergence}

We present in this section the setting of our study. In particular, we remind convergence 
results and some useful properties of continuous Feynman--Kac semigroups in 
Section~\ref{sec:continuous}, as well as convergence results for their discretizations in 
Section~\ref{sec:discrete}. Although these results are known, we believe that it is useful 
to gather them here to allow for a self-contained presentation of the numerical analysis
framework developped in Section~\ref{sec:numana}. 

%------------------------------
\subsection{Continuous dynamics}
\label{sec:continuous}

We denote by $P_t$ the evolution operator associated with the process $(q_t)_{t\geq 0}$
in~\eqref{eq:overdamped}: for all $\mu\in\PX$ and $\varphi\in\S$,
\[
P_t(\mu)(\varphi) = \E_{\mu}\left[ \varphi(q_t) \right].
\]
Its weighted counterpart is
\[
P_t^W(\mu)(\varphi) = \E_{\mu}\left[ \varphi(q_t) \, \e^{\int_0^t W(q_s) \, ds} \right].
\]
The infinitesimal generators of $P_t$ and $P_t^W$ are respectively $\Lc$ and $\Lc +W$, where we
denote with some abuse of notation by~$W$ the multiplication operator by the function~$W$.
Whether a statement corresponds to the function $W$ or the associated
multiplication operator should be clear from the context. We assume in the sequel that
the function $W$ is smooth, so that the associated multiplication operator
stabilizes the core~$\S$.

The existence of a spectral gap for the generator $\Lc +W$ and its adjoint is a key ingredient for our study. Here and in the sequel, and otherwise explicitly mentioned, all operators are considered on the Hilbert space 
\[
L^2(\nu) = \left\{ \varphi \textrm{ measurable} \, \left| \, \intX |\varphi|^2  d\nu < + \infty \right. \right\}.
\]
For a given closed operator~$T$ on~$L^2(\nu)$, we denote by $T^*$ the adjoint of~$T$ 
in~$L^2(\nu)$. In particular,
\[
\forall\, (\varphi,\psi) \in \S^2, \qquad \intX (T\varphi)\psi \, d\nu = \intX \varphi \left(T^*\psi\right) \, d\nu.
\]
In this functional framework, the reversibility of the dynamics is equivalent to the
self-adjointness of~$\Lc$ on $L^2(\nu)$. We however do not assume that this is the case, and
this is why we need to distinguish between eigenelements of $\Lc$ and $\Lc^*$. We can then state
the following.

\begin{prop}
  \label{as:eigen}
  The operator $\Lc + W$, considered on $L^2(\nu)$, has a real isolated principal eigenvalue $\lambda$ with associated eigenfunction $\hh \in \S$ normalized as
  \begin{equation}
    \label{eq:righteigen}
    (\Lc +W) \hh = \lambda \, \hh, \qquad \intX \hh \, d\nu=1.
  \end{equation}
  The operator $\Lc^*+W$ then also admits $\lambda$ as a real isolated principal eigenvalue, with associated eigenfunction $h_W \in \S$ normalized as 
  \begin{equation}
    \label{eq:eigenrelation}
    (\Lc^* +W) h_W = \lambda \, h_W, \qquad \intX h_W \, d\nu = 1. 
  \end{equation}
  Moreover, the functions $\hh$ and $h_W$ are positive.
\end{prop}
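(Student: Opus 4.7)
The plan is to reduce the claim to an infinite-dimensional Perron--Frobenius theorem by applying the Krein--Rutman theorem to the Feynman--Kac semigroup $P_t^W = \e^{t(\Lc+W)}$ at a fixed time $t>0$. Since $\X$ is compact, $b,\sigma,W$ are smooth and $\sigma > 0$, the unweighted heat semigroup $\e^{t\Lc}$ admits a smooth, strictly positive transition density. The Feynman--Kac representation $(P_t^W\varphi)(q) = \E_q[\varphi(q_t)\,\e^{\int_0^t W(q_s)\,ds}]$, combined with the uniform boundedness of $W$ on $\X$, shows that $P_t^W$ is an integral operator with a smooth, strictly positive kernel on $\X \times \X$; in particular it is compact and strongly positive on $L^2(\nu)$ and on $C(\X)$.

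Krein--Rutman then delivers a simple isolated principal eigenvalue $r(t) > 0$ of $P_t^W$, equal to its spectral radius, together with a strictly positive eigenfunction $\hh$. The semigroup identity $P_{t+s}^W = P_t^W P_s^W$ forces $r(t) = \e^{\lambda t}$ for some $\lambda \in \R$, so $\hh$ solves $(\Lc + W)\hh = \lambda \hh$. Elliptic regularity for this equation, whose coefficients are all smooth and which is uniformly elliptic, yields $\hh \in \Cinfty(\X) = \S$; rescaling enforces $\intX \hh \, d\nu = 1$, while the strict positivity of the Krein--Rutman eigenfunction keeps $\hh > 0$ pointwise.

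An identical argument applies to $\Lc^* + W$. The $L^2(\nu)$-adjoint of $\Lc$ can be written as $\Lc^* \psi = \rho^{-1} \Lc\dagg(\rho\psi)$, where $\rho$ is the smooth positive density of $\nu$, and is again a second-order uniformly elliptic operator with smooth coefficients on the compact set $\X$. Running the same Krein--Rutman argument produces a smooth positive eigenfunction $h_W \in \S$ with some eigenvalue $\tilde\lambda$. Since the spectrum of the $L^2(\nu)$-adjoint of a closed operator is the complex conjugate of the original spectrum, and both principal eigenvalues are real, $\tilde\lambda = \lambda$; normalization yields~\eqref{eq:eigenrelation}.

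The main technical obstacle is cleanly verifying the hypotheses of Krein--Rutman in the chosen functional setting. Strong positivity is most naturally phrased on $C(\X)$ (where the positive cone has nonempty interior), whereas the proposition is stated in $L^2(\nu)$; the identification of the principal eigenelements across the two spaces relies on the smoothing effect of $P_t^W$, which maps $L^2(\nu)$ into $\Cinfty(\X)$. One also has to argue that $\lambda$ is isolated in the spectrum of $\Lc + W$ (not merely that $r(t)$ is isolated in that of $P_t^W$), which follows from a spectral mapping argument exploiting the compactness of $P_t^W$ (or, equivalently, the analyticity of the semigroup generated by $\Lc+W$).
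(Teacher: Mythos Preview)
Your proposal is correct and follows essentially the same route as the paper: both rely on the Krein--Rutman theorem (the paper invokes it via~\cite{gartner1977large} on $C^0(\X)$, you apply it to the compact positive integral operator $P_t^W$), combined with elliptic regularity to obtain smoothness and with the strict positivity of the Feynman--Kac kernel to obtain positivity of the eigenfunctions. The only organizational difference is that the paper first establishes the discrete spectrum of $\Lc+W$ on $L^2(\nu)$ from standard elliptic theory and then transfers the $C^0$ Krein--Rutman conclusion via regularity, whereas you work directly with the semigroup and pass to the generator by spectral mapping; both arguments deliver the same ingredients.
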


The fact that $\hh, h_W \in \S$ is a consequence of elliptic regularity. Let us emphasize that, as a consequence of~\eqref{eq:eigenrelation}, the measure 
\[
\nu_W = h_W \, \nu
\]
is the only invariant probability measure for the evolution encoded by $P_t^{W-\lambda}$. Moreover, when 
the underlying diffusion is reversible, \textit{i.e.} $b = -\nabla V$ and $\nu(dq) = Z^{-1} \, 
\e^{-2 V(q)/\sigma^2} \, dq$, the operator $\Lc$ is self-adjoint ($\Lc^* = \Lc$) so that 
$\hh = h_W$. When $W=0$, it simply holds $h_W = \mathds{1}$ whatever~$b$.

\begin{proof}
It is shown in~\cite{gartner1977large} that the operator $\Lc+W$ has a 
real isolated principal eigenvalue when considered as an operator on $C^0(\X)$, the space of 
continuous functions over~$\X$. This can be proved using the Krein--Rutman
theorem~\cite{du2006order}.  On the other hand, standard results of spectral 
theory of elliptic operators on bounded domains show that $\Lc+W$ on~$L^2(\nu)$ has a discrete 
spectrum, which is bounded above~\cite{reed1978iv}. The first eigenvalue cannot be degenerate 
since the associated eigenvectors are smooth by elliptic regularity and are therefore also 
eigenvectors of $\Lc+W$ 
considered as an operator on $C^0(\X)$. Finally, the positivity of $\hh$ and $h_W$ follows 
from the fact that the evolution semigroup $P_t^W$ and its adjoint are operators with smooth 
and positive transition kernels (since the noise is non-degenerate), together with the 
equalities $P_t^W \hh = \e^{\lambda t} \hh$ and $(P_t^W)^* h_W = \e^{\lambda t} h_W$.
%{\bf RESULTAT DE GARTNER (cite par ReyBellet) D'UNICITE SUR $C^0(\X)$ : si on avait deux vect pp sur $L^2(\nu)$ ils seraient aussi dans $C^0$ et donc on peut conclure à l'unicité sur $L^2(\nu)$ ; dire avant que spectre discret, cf. resolvante compacte}
%{\bf ALTERNATIVEMENT : semigroup compact sur $L^\infty$ par ReyBellet (approche Lyapunov, Thm 8.9 item ii mettre tout dans $b_n$) puis Krein-Rutman (on verifie conservation positivite) ; MIEUX : faire avec resolvante sur $L^2(\nu)$, qui est a resolvante compacte ; juste montrer positivite, pour ce faire shifter $\Lc+W+\lambda$ afin que $W+\lambda\geq 1$, ecrire resolvante comme integrale semigroupe, et utiliser semigroupe strongly positive aux temps finis et positif dans tous les cas ; on a unicité sur $L^\infty$ et on en déduit unicité sur $L^2(\nu)$ comme ci-dessus}----------------> SAUF QUE L'INTERIEUR DU CONE DANS $L^p$ EST VIDE (au moins si p < +\infty) et donc pas bien de travailler sur L^2... c'est d'ailleurs pour ça que les gens font du C^0 and co'
\end{proof}

In what follows, we use the subspaces $L_W^2(\nu)$ and $\Sw$ of functions of average $0$ 
with respect to $\nuw$:
\[
L_W^2(\nu)=\left\{ 
\varphi\in L^2(\nu) \ \middle| \  \intX \varphi\,  d\nuw =0 
\right\},
\qquad
\Sw=\left\{ 
\varphi\in\S \ \middle| \ \intX \varphi\,  d\nuw =0
\right\}.
\]
We also introduce the measure $\nuhw=\hh \,\nu$, the space
\[
\label{eq:spaces2}
\Shw=\left\{ 
\varphi\in\S \ \middle| \ \intX \varphi\,  d\nuhw =0
\right\},
\]
and we denote by
\begin{equation}
  \label{eq:deltaW}
  \delta_W=\inf \Big\{ \lambda-\mathrm{Re}(z),\, z\in\sigma(\Lc + W)\setminus \{\lambda\} \Big\}>0
\end{equation}
the spectral gap of $\Lc +W$ in $L^2(\nu)$.
The fact that the largest eigenvalue $\lambda$ is a priori non-zero corresponds to a possible 
creation ($\lambda>0$) or destruction ($\lambda <0$) of probability induced by the source term 
$W$, which plays the role of an importance sampling function. The 
statement about the spectral gap in Proposition~\ref{as:eigen} implies the convergence of 
the Feynman--Kac semigroup~\eqref{eq:feynmankac}, as stated in the following result. 

\begin{prop}
\label{theo:continuousconvergence}
There exists $C>0$ such that, for all $\mu\in\PX$ and $\varphi\in L^2(\nu)$,
\begin{equation}
\label{eq:feynmankaccv}
\forall \, t \geq 1, 
\qquad 
\left| \Phi_t^{W}(\mu)(\varphi) - \intX\varphi\, d \nuw \right|\leq C \left\|\varphi\right\|_{L^2(\nu)} \e^{-\delta_W t},
\end{equation}
where~$\delta_W$ is defined in~\eqref{eq:deltaW}.
\end{prop}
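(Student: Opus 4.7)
The plan is to exploit the isolated character of the principal eigenvalue $\lambda$ granted by Proposition~\ref{as:eigen} to decompose $P_t^W = e^{t(\Lc+W)}$ spectrally, then substitute this decomposition into the ratio that defines $\Phi_t^W(\mu)(\varphi)$. Concretely, I introduce the rank-one spectral projector built from the right and left eigenfunctions $\hh$ and $h_W$ of~\eqref{eq:righteigen}--\eqref{eq:eigenrelation},
$$\Pi\varphi = \frac{\hh}{\intX \hh\, h_W\, d\nu}\,\intX \varphi\, h_W\, d\nu,$$
which commutes with $P_t^W$ and satisfies $P_t^W \Pi = e^{\lambda t}\Pi$. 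Since the rest of $\sigma(\Lc+W)$ lies in $\{\mathrm{Re}(z) \leq \lambda-\delta_W\}$, a standard Riesz functional calculus on $L^2(\nu)$ gives the remainder estimate
$$\big\|(I-\Pi)\,P_t^W\big\|_{L^2(\nu)\to L^2(\nu)} \leq C\,e^{(\lambda-\delta_W) t},\qquad t\geq 0.$$

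The main obstacle I anticipate is that $\mu\in\PX$ need not admit an $L^2(\nu)$ density, so an $L^2$ remainder bound cannot be directly paired against $\mu$. To bypass this I use the smoothing effect of the non-degenerate diffusion on the compact manifold $\X$: the transition kernel of $P_1^W$ is smooth and bounded, hence $P_1^W\colon L^2(\nu)\to L^\infty(\X)$ is continuous. Writing $(I-\Pi)P_t^W = P_1^W\,(I-\Pi)P_{t-1}^W$ and composing with the previous $L^2$ bound yields, for every $t\geq 1$,
$$\big\|(I-\Pi)P_t^W \varphi\big\|_{L^\infty(\X)} \leq C\,e^{(\lambda-\delta_W) t}\,\|\varphi\|_{L^2(\nu)},$$
an estimate that can now be integrated against an arbitrary probability measure.

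I then rewrite the Feynman--Kac ratio as
$$\Phi_t^W(\mu)(\varphi) = \frac{\mu(P_t^W\varphi)}{\mu(P_t^W\mathds{1})} = \frac{\mu(\Pi\varphi) + e^{-\lambda t}\mu\big((I-\Pi)P_t^W\varphi\big)}{\mu(\Pi\mathds{1}) + e^{-\lambda t}\mu\big((I-\Pi)P_t^W\mathds{1}\big)},$$
and a direct computation using the definitions of $\Pi$ and $\nuw = h_W\,\nu$ identifies $\mu(\Pi\varphi)/\mu(\Pi\mathds{1})$ with $\intX \varphi\,d\nuw$, which is exactly the target limit. The denominator stays bounded below uniformly in $\mu$: indeed, $\mu(\Pi\mathds{1}) = \mu(\hh)/\intX \hh\,h_W\,d\nu \geq (\inf_\X \hh)/\intX \hh\,h_W\,d\nu > 0$, using that $\hh$ is continuous and strictly positive on the compact $\X$. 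Combining the uniform lower bound on the main part of the denominator with the $L^\infty$ remainder estimate, a standard computation on the difference of the two ratios gives~\eqref{eq:feynmankaccv} for all $t$ large enough that the remainder in the denominator is smaller than half the main part.

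The remaining range $t\in[1,T_\star]$ is handled directly: the denominator satisfies $\mu(P_t^W\mathds{1})\geq e^{-t\|W\|_\infty}$, the numerator is bounded by $C(t)\|\varphi\|_{L^2(\nu)}$ thanks to the same smoothing argument applied once (at time $1$), and $|\intX\varphi\,d\nuw|\leq \|h_W\|_{L^2(\nu)}\|\varphi\|_{L^2(\nu)}$ by Cauchy--Schwarz; since $e^{-\delta_W t}$ is bounded away from $0$ on $[1,T_\star]$, the constant $C$ in~\eqref{eq:feynmankaccv} can simply be enlarged to absorb this bounded-time regime, completing the proof.
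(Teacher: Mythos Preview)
Your proof is correct and follows essentially the same strategy as the paper: decompose $P_t^W$ via the rank-one spectral projector onto $\hh$ (the paper writes the complementary projector $\widehat{\Pi}_W = I-\Pi$), obtain the $L^2(\nu)$ decay $\|(I-\Pi)P_t^W\|\leq C e^{(\lambda-\delta_W)t}$ from the spectral gap, and then use one unit of smoothing to pass from $L^2$ to a space that can be paired with an arbitrary $\mu\in\PX$. The only notable technical difference is \emph{where} the smoothing is applied: the paper regularizes the measure side, invoking a lemma that $\Phi_1(\mu)$ has a density bounded above and below, and then works exclusively with such measures; you instead regularize the observable side, using $P_1^W\colon L^2(\nu)\to L^\infty(\X)$ to upgrade the remainder bound to a sup-norm estimate that integrates against any probability measure. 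These are dual manoeuvres and yield the same conclusion; your version is slightly more direct in that it avoids introducing the auxiliary class $\mathcal{P}_\alpha(\X)$, while the paper's version makes the uniform positivity of the denominator a bit more transparent.
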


As made clear in the proof of this result (see Section~\ref{sec:proof_continuous}), it is possible to consider any observable $\varphi \in L^2(\nu)$ even if $\mu$ is singular. This is due to the regularizing properties of the underlying diffusion for positive times, and explains why the convergence result is stated only for times $t \geq 1$. The next proposition will be frequently used in this work. 

\begin{prop}
  \label{prop:intlambda}
  It holds
  \begin{equation}
    \intX W \,d\nuw = \lambda.
  \end{equation}
\end{prop}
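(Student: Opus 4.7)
The plan is to exploit the eigenvalue equation~\eqref{eq:eigenrelation} for $h_W$ together with the fact that the generator $\Lc$ annihilates constants. The key observation is that $\nuw = h_W \, \nu$ by definition, so $\int_\X W \, d\nuw = \int_\X W h_W \, d\nu$, and on the right hand side I can substitute $W h_W$ using the eigenrelation.

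More concretely, I would proceed as follows. First, rewrite $\int_\X W \, d\nuw = \int_\X W h_W \, d\nu$. Second, use $(\Lc^* + W) h_W = \lambda h_W$ to replace $W h_W = \lambda h_W - \Lc^* h_W$, giving
\[
\int_\X W \, d\nuw = \lambda \int_\X h_W \, d\nu - \int_\X \Lc^* h_W \, d\nu = \lambda - \int_\X \Lc^* h_W \, d\nu,
\]
where the normalization $\int_\X h_W \, d\nu = 1$ from~\eqref{eq:eigenrelation} has been used. Third, apply the defining adjoint relation in $L^2(\nu)$ with the test pair $(\varphi, \psi) = (\mathds{1}, h_W)$, which is legitimate since $h_W \in \S$ and the constant function $\mathds{1}$ lies in $\S$: this yields
\[
\int_\X \Lc^* h_W \, d\nu = \int_\X (\Lc \mathds{1}) h_W \, d\nu = 0,
\]
because $\Lc = b \cdot \nabla + \frac{\sigma^2}{2} \Delta$ manifestly kills constants. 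Combining these three steps gives $\int_\X W \, d\nuw = \lambda$.

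There is no real obstacle here; the only point requiring a moment of care is making sure one is entitled to pair $\mathds{1}$ against $h_W$ through the adjoint, but compactness of $\X$ (so that $\mathds{1} \in L^2(\nu) \cap \S$) and smoothness of $h_W$ obtained from elliptic regularity in Proposition~\ref{as:eigen} make this immediate. Alternatively one could view the same computation as integrating the eigenvalue equation $(\Lc^* + W) h_W = \lambda h_W$ against the Lebesgue measure (or $\nu$) and using $\Lc^{\dagger} \nu = 0$ to discard the generator term, which is really the same argument in another guise.
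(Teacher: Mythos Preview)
Your proof is correct and follows essentially the same approach as the paper: integrate the eigenrelation $(\Lc^* + W)h_W = \lambda h_W$ against $\nu$, use the adjoint relation to convert $\int_\X \Lc^* h_W \, d\nu$ into $\int_\X (\Lc\mathds{1}) h_W \, d\nu = 0$, and invoke the normalization $\int_\X h_W \, d\nu = 1$. The paper's version is simply a more compressed one-line presentation of exactly these steps.
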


\begin{proof}
Integrating both sides of~\eqref{eq:eigenrelation} on~$\X$,
\[
\intX W \, d\nuw = \intX \lambda\hw \, d\nu - \intX \Lc^* h_W \, d\nu
= \lambda \intX h_W \, d\nu - \intX \Lc\mathds{1} \, d\nu_W = \lambda,
\]
since $\Lc\mathds{1}=0$.
\end{proof}

A natural corollary of Propositions~\ref{theo:continuousconvergence} and~\ref{prop:intlambda} is that the largest eigenvalue of $\Lc+W$ can be obtained by a long time average of $W$ using the Feynman--Kac semigroup~\eqref{eq:feynmankac}.

\begin{corollary}
\label{cor:eigencv}
There exists $C>0$ such that, for any initial distribution $\mu \in \PX$,
\[
\left| \Phi_t^W(\mu)(W) -\lambda \right| \leq C \e^{- \delta_W t}.
\]
\end{corollary}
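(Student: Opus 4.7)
The plan is to apply Proposition~\ref{theo:continuousconvergence} directly with the specific choice $\varphi = W$, then identify the limit via Proposition~\ref{prop:intlambda}. Since $\X$ is compact and $W$ is smooth, we have $W \in \Cinfty(\X) \subset L^2(\nu)$, so the hypotheses are met.

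First I would write, for any $t \geq 1$,
\[
\left| \Phi_t^W(\mu)(W) - \intX W \, d\nuw \right| \leq C \|W\|_{L^2(\nu)} \e^{-\delta_W t},
\]
which is simply~\eqref{eq:feynmankaccv} applied to the observable~$W$. Then I would invoke Proposition~\ref{prop:intlambda} to rewrite $\intX W \, d\nuw = \lambda$, which gives the conclusion on $[1,\infty)$ with constant $C \|W\|_{L^2(\nu)}$ (which is finite by smoothness of~$W$ and compactness of~$\X$).

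It remains to handle the small-time window $t \in [0,1)$. Since~$W$ is bounded on the compact set~$\X$, say $|W| \leq M$, the quantity $\Phi_t^W(\mu)(W)$ is bounded in absolute value by~$M$ uniformly in $t$ and $\mu$, and $|\lambda| \leq M$ by Proposition~\ref{prop:intlambda}. Hence $|\Phi_t^W(\mu)(W) - \lambda| \leq 2M \leq 2M \e^{\delta_W} \e^{-\delta_W t}$ for $t \in [0,1)$. Taking the maximum of the two prefactors yields a uniform constant~$C$ valid for all $t \geq 0$.

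There is no real obstacle here: the result is a direct specialization of the convergence of the Feynman--Kac semigroup combined with the integral identity for $\lambda$. The only mild subtlety is that~\eqref{eq:feynmankaccv} is stated for $t \geq 1$ (because one exploits the regularizing effect of the diffusion to handle arbitrary initial measures~$\mu$), but the boundedness of~$W$ on the compact torus trivially absorbs the short-time regime into the exponential bound.
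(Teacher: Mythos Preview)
Your proof is correct and follows exactly the approach the paper intends: the corollary is stated without proof as a direct consequence of Propositions~\ref{theo:continuousconvergence} and~\ref{prop:intlambda}. Your explicit treatment of the small-time window $t\in[0,1)$ is a welcome addition, since~\eqref{eq:feynmankaccv} is only stated for $t\geq 1$ while the corollary is stated for all $t$.
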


Another important consequence of Proposition~\ref{as:eigen} is the invertibility of the 
generator and its adjoint over suitable functional spaces. 

\begin{prop}
\label{prop:inversibility}
The operator $\Lc + W - \lambda$ is invertible on $\Sw$, in the sense that, for any $g \in \Sw$,
the Poisson equation
\[
(\Lc + W - \lambda)u = g
\]
admits a unique solution $u \in \Sw$, which is denoted by $(\Lc + W - \lambda)^{-1}g$. Similarly, 
$\Lc^* + W - \lambda$ is invertible on~$\Shw$. 
\end{prop}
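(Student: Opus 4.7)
The plan is to combine the standard spectral decomposition coming from the isolated simple eigenvalue $\lambda$ of $\Lc+W$ (guaranteed by Proposition~\ref{as:eigen}) with elliptic regularity: first invert $\Lc+W-\lambda$ on a closed $L^2(\nu)$--subspace, then upgrade the solution to $\Cinfty$, and finally argue uniqueness. The key step is to identify the invariant complement of $\mathrm{span}(\hh)$ with $L_W^2(\nu)$ via duality with the adjoint eigenfunction $\hw$.

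To carry this out, I would first introduce the Riesz spectral projector $\Pi$ of $\Lc+W$ associated with $\lambda$, well defined because $\lambda$ is isolated with spectral gap $\delta_W > 0$, and of rank one because $\lambda$ is simple, with range $\mathrm{span}(\hh)$. Writing $\Pi \varphi = \alpha(\varphi)\, \hh$ and pairing against $\hw$ using $\Pi^* \hw = \hw$ (which holds because $\Pi^*$ is the spectral projector of $\Lc^*+W$ at $\lambda$), one finds $\alpha(\varphi) \intX \hh \hw \, d\nu = \intX \varphi \hw \, d\nu$, hence
\[
\Pi \varphi = \frac{1}{\intX \hh \, d\nuw}\left(\intX \varphi \, d\nuw\right) \hh.
\]
In particular $\ker \Pi = L_W^2(\nu)$, giving the $(\Lc+W)$--invariant decomposition $L^2(\nu) = \mathrm{span}(\hh) \oplus L_W^2(\nu)$. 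On the invariant subspace $L_W^2(\nu)$, the spectrum of $\Lc+W$ is $\sigma(\Lc+W)\setminus\{\lambda\}$, whose distance from $\lambda$ is exactly $\delta_W > 0$ by~\eqref{eq:deltaW}, so $\Lc+W-\lambda$ is boundedly invertible on $L_W^2(\nu)$. For any $g \in \Sw \subset L_W^2(\nu)$ there is therefore a unique $u \in L_W^2(\nu)$ solving $(\Lc+W-\lambda)u = g$.

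It remains to show $u \in \S$. Since $\Lc+W-\lambda$ is a second-order uniformly elliptic operator with $\Cinfty$ coefficients on the smooth compact manifold $\X$, and the source $g$ is smooth, standard elliptic regularity yields $u \in \Cinfty(\X) = \S$, so $u \in \Sw$. Uniqueness on $\Sw$ follows from $\ker(\Lc+W-\lambda) = \mathrm{span}(\hh)$ together with $\intX \hh \, d\nuw = \intX \hh \hw \, d\nu > 0$, which rules out any nontrivial multiple of $\hh$ lying in $\Sw$. The proof for $\Lc^*+W-\lambda$ on $\Shw$ is entirely symmetric, swapping the roles of $\hh$ and $\hw$: the Riesz projector of $\Lc^*+W$ at $\lambda$ has kernel $\{\varphi \in L^2(\nu) : \intX \varphi \, d\nuhw = 0\}$, and $\Lc^*$ is again a smooth-coefficient elliptic operator, so the same invertibility-plus-regularity scheme gives solvability on $\Shw$. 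The only nonroutine point is verifying the duality between the Riesz projectors of $\Lc+W$ and $\Lc^*+W$ needed to pin down $\ker \Pi$; the rest reduces to standard spectral and elliptic theory.
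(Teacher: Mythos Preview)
Your argument is correct and close in spirit to the paper's, but the mechanism for establishing invertibility on the complementary subspace differs. The paper works with the projector $\widehat{\Pi}_W\varphi=\varphi-\hh\,\dfrac{\int_\X\varphi\,d\nuw}{\int_\X\hh\,d\nuw}$ (which is precisely your $I-\Pi$), and instead of invoking the abstract resolvent bound coming from the spectral gap, it recycles the Hille--Yosida decay estimate already obtained in the proof of Proposition~\ref{theo:continuousconvergence} to write down the inverse explicitly as $-\int_0^{+\infty}P_t^{W-\lambda}\widehat{\Pi}_W\,dt$; elliptic regularity is then used exactly as you do. Your Riesz-projector route is slightly more self-contained (it does not rely on the semigroup estimate) and makes the identification $\ker\Pi=L_W^2(\nu)$ more transparent via the duality with $\hw$, while the paper's approach has the advantage of providing a concrete integral formula for $(\Lc+W-\lambda)^{-1}$ on $\Sw$. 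The paper also remarks that the Fredholm alternative would give yet another path to the same conclusion.
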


The proof of this result can be read in Section~\ref{sec:proof_continuous}. Let us emphasize that the smoothness of $W$ is crucial for this proposition to be true. Note also that the stability of the core of the operator $\Lc +W$ would be harder to prove for non-compact state spaces, as this is already a non-trivial statement for the Poisson equation with $W=0$, see~\cite{kopec2014weak,kopec2015weak}.

%-------------------------------- Discrete time --------------------------------------

\subsection{Discretization}
\label{sec:discrete}
We now turn to the discretization of the Feynman--Kac semigroup~\eqref{eq:feynmankac}.
We first define discretization schemes, and show that they are ergodic for some limiting
measure  under mild assumptions. We  also recall the stationarity equation satisfied by this
invariant probability measure, which proves crucial for the numerical analysis developped in 
Section~\ref{sec:numana}.

The properties of discretized Feynman--Kac semigroups are related to the properties of the 
underlying discrete dynamics. The approximation of the continuous 
dynamics~\eqref{eq:overdamped} is given, for a time time $\Dt$, by a Markov chain 
$(q^n)_{n\in\N}$ such that $q^n \simeq q_{n\Dt}$. This Markov chain is characterized by the evolution operator $\Qdt$ defined as
\begin{equation}
\label{eq:Qdt}
(\Qdt\varphi)(q)=\E \left[ \left. \varphi(q^{n+1}) \, \right| \, q^n = q \right].
\end{equation}
A typical example is the Euler Maruyama scheme defined by:
\begin{equation}
  \label{eq:euler}
  q^{n+1} = q^n + b(q^n)\Dt + \sigma \sqrt{\Dt}\, G^n,
\end{equation}
where $(G^n)_{n \geq 0}$ is a familly of independent and identically distributed standard $d$-dimensional Gaussian random variables. In order to perform our analysis in Section~\ref{sec:numana}, it is convenient to rephrase discretizations of~\eqref{eq:feynmankac} such as~\eqref{eq:typicaldiscr} in terms of an evolution operator. For instance, we see that, defining
\begin{equation}
\label{eq:QW1}
(\Qdt^W\varphi)(q)=\e^{\Dt W(q)} (\Qdt\varphi)(q),
\end{equation}
the discretization~\eqref{eq:typicaldiscr} reads, for an initial measure $\mu$ and a
test function $\varphi$,
\begin{equation}
  \label{eq:generaldiscr} 
  \Phi_{\Dt,n}^W(\mu)(\varphi) 
  =\frac{\mu\left( (\Qdt^W)^{n}\varphi \right)}{\mu\left( (\Qdt^W)^{n}\mathds{1} \right)}.
\end{equation}

We use the definition~\eqref{eq:generaldiscr} for more general discretizations of~\eqref{eq:feynmankac} characterized by an evolution operator $\Qdt^W$. Consistency requirements on $\Qdt^W$ are made precise in Assumption~\ref{as:dlsemigroupW} below. This allows us to take into account various integration rules, both for the underlying dynamics and the exponential weights. For instance, the choice
\begin{equation}
\label{eq:QW2}
(\Qdt^W\varphi)(q)=\e^{\frac{\Dt}{2} W(q)} \left[\Qdt\left(\e^{\frac{\Dt}{2} W}\varphi
\right)\right](q),
\end{equation}
well-known in the diffusion Monte Carlo community~\cite{sarsa2002quadratic,mella2000time,makri1989exponential,umrigar1993diffusion}, defines the following semigroup:
\[
\Phi_{\Dt,n}^W(\mu)(\varphi)=
\frac{\dps \E_{\mu}\left[ \varphi(q^n) \, \e^{ \Dt \sum_{i=0}^{n-1}\frac{W(q^i)+W(q^{i+1})}{2}} \right] }
{\dps \E_{\mu}\left[\e^{ \Dt \sum_{i=0}^{n-1}\frac{W(q^i)+W(q^{i+1})}{2}} \right] }.
\]

\begin{remark}
  The weighted evolution on the position $q_t$ can be equivalently formulated as the 
  unweighted evolution for the augmented system~$(q_t,z_t)_{t\geq 0}$, where $z_t\geq 0$ is
  solution to
  \[
  dz_t = z_t W(q_t)\,dt,\quad z_0 =1.
  \]
  However, $z_t$ is unbounded and may diverge to $+\infty$. The augmented
  dynamics~$(q_t,z_t)_{t\geq 0}$ therefore does not have an invariant measure in general,
  which complicates the analysis of the long time limit. Moreover,
  a naive discretization like the Euler-Maruyama scheme applied to~$(q_t,z_t)_{t\geq 0}$ reads
  \[
  \left\{
  \begin{aligned}
    q^{n+1} & =  q^n + b(q^n)\Dt + \sigma \sqrt{\Dt}\, G^n,
    \\ z^{n+1} & =  z^n + z^n W(q^n)\Dt.
    \end{aligned}
  \right.
  \]
  Observe that the positivity of~$z_t$ may not be preserved during the dynamics if~$\Dt$ is
  too large, which is crucial for the numerical scheme to be well-defined. This issue persists
  in general for other schemes.  On the other hand, if $q^n$ is fixed, the process $z_t$  solving 
  \[
  dz_t = z_t W(q^n)\,dt, \quad z_n = z
  \]
  over a time step~$\Dt$ admits the exact solution
  \[
  z^{n+1} = z\, \e^{ W(q^n)\Dt}.
  \]
  Therefore, a first order splitting between $q_t$ and $z_t$ leads to the first order
  integrator~\eqref{eq:QW1}. If we perform a second order
  splitting between $q_t$ and $z_t$, we are back to the second order integration rule
  prescribed by~\eqref{eq:QW2}. As a result, although considering an extended system
  $(q_t,z_t)_{t\geq 0}$ of course makes sense, we see that, in order for the positivity of
  $z_t$ to be unconditionally preserved, we are naturally led to the same schemes as for the
  usual Feynman--Kac dynamics. There is finally a technical restriction with the reformulation
  of the Feynman--Kac dynamics using the augmented process~$(q_t,z_t)_{t\geq 0}$. The
  generator $\Lc_{\mathrm{aug}}$ of $(q_t,z_t)_{t\geq 0}$ is defined, for a test function~$\varphi$,
  through $\Lc_{\mathrm{aug}}\varphi(q,z) = \Lc \varphi(q,z) + zW(q)\partial_z\varphi(q,z)$.
  However, the numerical analysis presented in Section~\ref{sec:numana} uses stability properties of
  the inverse of the generator of the dynamics (see Assumption~\ref{as:stability} below).
  While~$\Lc$ is invertible as an operator acting on functions of~$q$, it is
  much more  difficult to define the inverse of~$\Lc_{\mathrm{aug}}$ in a general way (think of
  the case $W= 0$).

\end{remark}

In what follows, given that the discrete semigroup defines a measure-valued dynamics, we write 
for simplicity $\mu_n=\Phi_{\Dt,n}^W(\mu)$, and we denote by
$\Linfty=\{\varphi\,\mathrm{measurable}\,|\,\sup_{q\in\X}|\varphi(q)|<+\infty\}$ the space of bounded measurable
functions. For a given bounded operator $Q$ on $\Linfty$ and a probability measure
$\mu\in\PX$, we also denote by $\mu Q$ the probability measure defined as
\begin{equation}
  \label{eq:def_mu_Q}
  \forall\, \varphi \in \S, \qquad (\mu Q)(\varphi) = \mu(Q\varphi). 
\end{equation}
We start by recalling a one-step formulation of the non-linear dynamics $(\mu_n)_{n \geq 0}$, as
suggested \textit{e.g.} in~\cite{del2003particle}. This formulation is the basis for a stationarity 
property fundamental in our numerical analysis.

\begin{lemma}
\label{lem:measuredynamics}
The sequence of probability measures $\mu_n=\Phi_{\Dt,n}^W(\mu)$ satisfies the following dynamics:
\[
\mu_{n+1}=\K\mu_n,
\]
where
\begin{equation}
\label{eq:etadynamics}
\forall\, \mu \in\PX,\quad \forall\, \varphi\in\S, 
\qquad 
\K\mu(\varphi) = \frac{\mu\left( \Qdt^W \varphi \right)}{\mu\left(\Qdt^W \mathds{1} \right)}.
\end{equation}
\end{lemma}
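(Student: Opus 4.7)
The plan is to verify the recursion directly from the closed-form definition of $\Phi_{\Dt,n}^W$ given in~\eqref{eq:generaldiscr}. Starting from
\[
\Phi_{\Dt,n+1}^W(\mu)(\varphi) = \frac{\mu\bigl((\Qdt^W)^{n+1}\varphi\bigr)}{\mu\bigl((\Qdt^W)^{n+1}\mathds{1}\bigr)},
\]
I would rewrite $(\Qdt^W)^{n+1} = (\Qdt^W)^n \circ \Qdt^W$ both in the numerator and the denominator, so as to expose the test functions $\Qdt^W\varphi$ and $\Qdt^W\mathds{1}$ inside a common application of $(\Qdt^W)^n$. Then I would multiply and divide by the scalar normalization $\mu((\Qdt^W)^n\mathds{1})$. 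The numerator divided by this scalar is exactly $\mu_n(\Qdt^W\varphi)$ by the defining formula~\eqref{eq:generaldiscr}, and similarly the denominator divided by the same scalar is $\mu_n(\Qdt^W\mathds{1})$. Matching this with~\eqref{eq:etadynamics} gives $\mu_{n+1}(\varphi) = \K\mu_n(\varphi)$, valid for every $\varphi \in \S$, hence the claimed identity at the level of probability measures.

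The only nontrivial point to check is that the normalization constant $\mu\bigl((\Qdt^W)^n\mathds{1}\bigr)$ we use to divide is strictly positive, so that both $\K\mu_n$ and $\Phi_{\Dt,n+1}^W(\mu)$ are well-defined probability measures. This is where I would rely on the pointwise structure of $\Qdt^W$: from~\eqref{eq:QW1} (or the splitting in~\eqref{eq:QW2}) and the compactness of~$\X$, the function $q \mapsto \e^{\Dt W(q)}$ is bounded below by a positive constant, and since $\Qdt$ is a Markov operator preserving positivity, an immediate induction shows $(\Qdt^W)^n\mathds{1} \geq \e^{n\Dt\,\min_\X W} > 0$. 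Integrating against any probability measure~$\mu$ gives the required positivity.

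Finally, although the statement is phrased on the core $\S$, the recursion automatically extends to any bounded measurable test function by the same computation, which is the form in which the lemma will be invoked in Section~\ref{sec:numana}. I do not anticipate any real obstacle here: the content of the lemma is essentially the cancellation of the normalization factor $\mu((\Qdt^W)^n\mathds{1})$ between the $(n+1)$-step and one-step expressions, and the whole proof is a short algebraic manipulation once positivity has been secured.
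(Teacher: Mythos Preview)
Your proof is correct and follows essentially the same approach as the paper: both start from the definition~\eqref{eq:generaldiscr}, factor $(\Qdt^W)^{n+1} = (\Qdt^W)^n \Qdt^W$, and multiply and divide by $\mu((\Qdt^W)^n\mathds{1})$ to recognize the ratio $\mu_n(\Qdt^W\varphi)/\mu_n(\Qdt^W\mathds{1})$. Your additional remarks on positivity of the normalization and extension to bounded measurable test functions are reasonable but not present in the paper's proof, which treats the computation as a pure algebraic rewriting.
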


\begin{proof}
The proof relies on a simple rewriting: for all $\varphi\in\S$,
\[
\mu_{n+1}(\varphi) 
= \frac{\mu\left( (\Qdt^W)^{n+1}\varphi \right) }{ \mu\left( (\Qdt^W)^{n+1}\mathds{1}\right)} 
= \frac{\mu\left( (\Qdt^W)^{n}(\Qdt^W\varphi) \right) }{\mu\left((\Qdt^W)^{n}\mathds{1}\right)}
\times \frac{\mu\left((\Qdt^W)^{n}\mathds{1}\right)}{\mu\left( (\Qdt^W)^{n}(\Qdt^W\mathds{1})
  \right)} =\frac{\mu_n\left(\Qdt^W\varphi\right)}{\mu_n\left( \Qdt^W\mathds{1} \right)}, 
\]
which gives the result.
\end{proof}

Let us now prove that the measure-valued dynamical process~\eqref{eq:etadynamics} admits a limit measure $\mu_{\infty}$ independent of the initial distribution $\mu$, and that the long time average~\eqref{eq:generaldiscr} converges to the average with respect to this measure. We follow the strategy of Del Moral and collaborators~\cite{del2001stability,del2003particle,del2000branching,del2004feynman}, which relies on the Dobrushin ergodic coefficient of a relevant operator (see Appendix~\ref{sec:dobrushin}). For this, we use the following assumption, which is typically satisfied for discretizations associated with the continuous dynamics~\eqref{eq:feynmankac} on the torus.

\begin{assumption}
  \label{as:minorization}
  The operator $\Qdt^W$ satisfies a minorization and boundedness condition: there exist $\varepsilon\in(0,1)$ and $\eta\in\PX$ such that, for all non-negative bounded measurable function $\varphi$,
  \begin{equation}
    \label{eq:minorization}
    \forall\,q\in\X, \quad
    \varepsilon 
    \eta(\varphi) \leq \left(\Qdt^W \varphi\right)(q) \leq \varepsilon^{-1}\eta(\varphi). 
  \end{equation}
\end{assumption}

The condition~\eqref{eq:minorization} is satisfied for the evolution operator~\eqref{eq:QW1} as 
soon as a condition similar to~\eqref{eq:minorization} is satisfied for the evolution 
operator~$\Qdt$. The latter condition is, in turn, easily seen to be true for the numerical 
scheme~\eqref{eq:euler}, with $\eta(dq) = |\X|^{-1} dq$ the normalized Lebesgue measure 
on~$\X$, see~\cite[Section 3.3.2]{lelievre2016partial}. Similar considerations allow 
to prove that~\eqref{eq:minorization} holds for 
more complicated discretization strategies~\cite{hairer2011yet,lelievre2016partial}.

We can now recall an important result which ensures the existence and uniquess of the 
limiting measure for the discretized Feynman--Kac dynamics. Its proof, taken 
from~\cite{del2001stability}, is recalled in Section~\ref{sec:feynmankacdiscr}. To state the 
result, we introduce the total variation distance between two measures $\mu$, $\nu\in\PX$:
\[
\label{eq:deftotalvariation}
\|\mu - \nu \|_{\mathrm{TV}} = \underset{A\subset \X}{\sup}\ \, | \mu(A) - \nu(A)|, 
\]
where the supremum runs over measurable subsets of~$\X$. Recall that $\PX$ is complete for this distance. 

\begin{theorem}
\label{theo:feynmankacdiscr}
Suppose that Assumption~\ref{as:minorization} holds true. Then the non-linear dynamics~\eqref{eq:etadynamics} admits a unique stationary probability measure $\muinfty$ which is independent of the initial measure and which is a fixed point of $\K$: 
\begin{equation}
\label{eq:fixedpoint}
\muinfty = \K \muinfty.
\end{equation}
Moreover, for any initial distribution $\mu\in\PX$, 
\begin{equation}
\label{eq:TVconvergence}
\| \mu_n - \muinfty \|_{\mathrm{TV}} \leq 2 \left(1 - \varepsilon^2\right)^n.
\end{equation} 

\end{theorem}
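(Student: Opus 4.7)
The plan is to show that the nonlinear operator $\K$ defined in~\eqref{eq:etadynamics} is a strict contraction on the complete metric space $(\PX, \|\cdot\|_{\mathrm{TV}})$ with contraction constant $1-\varepsilon^2$, and then to invoke the Banach fixed point theorem. The crucial input is the Dobrushin ergodic coefficient framework recalled in Appendix~\ref{sec:dobrushin}, whose applicability here is guaranteed by the Doeblin-type minorization in Assumption~\ref{as:minorization}.

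First I would observe that the minorization immediately controls the denominator appearing in $\K\mu$: for any $\mu \in \PX$, one has $\mu(\Qdt^W\mathds{1}) \in [\varepsilon,\varepsilon^{-1}]$, so $\K\mu$ is well defined. Next, for a test function $\varphi \in \S$ and measures $\mu, \nu \in \PX$, the centering substitution $\psi = \varphi - \K\mu(\varphi)\mathds{1}$ yields $\mu(\Qdt^W \psi) = \mu(\Qdt^W \varphi) - \K\mu(\varphi)\,\mu(\Qdt^W\mathds{1}) = 0$ by the very definition of $\K$, and hence the exact identity
\[
\K\nu(\varphi) - \K\mu(\varphi) = \K\nu(\psi) = \frac{(\nu - \mu)(\Qdt^W \psi)}{\nu(\Qdt^W\mathds{1})}.
\]
This reduces the difference of two nonlinear outputs to a linear functional of the signed measure $\nu - \mu$ of total mass zero.

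Second I would exploit the minorization by writing $\Qdt^W(q,dy) = \varepsilon\,\eta(dy) + R(q,dy)$ with $R(q,\cdot)$ a positive kernel of total mass $\Qdt^W\mathds{1}(q) - \varepsilon \leq \varepsilon^{-1}-\varepsilon$. Since the term $\varepsilon\,\eta(\psi)$ is constant in $q$, it contributes zero when integrated against the centered signed measure $\nu - \mu$, so only the remainder $R$ survives. Combining this cancellation with the lower bound $\nu(\Qdt^W\mathds{1}) \geq \varepsilon$, the dual characterization of total variation, and the Dobrushin estimate from Appendix~\ref{sec:dobrushin} applied to the normalized kernel $\Qdt^W/\Qdt^W\mathds{1}$ (whose Doeblin minorization constant is $\varepsilon^2$ by the combined upper and lower bounds), one obtains the contraction
\[
\|\K\mu - \K\nu\|_{\mathrm{TV}} \leq (1-\varepsilon^2)\,\|\mu - \nu\|_{\mathrm{TV}}.
\]

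Finally, since $(\PX,\|\cdot\|_{\mathrm{TV}})$ is complete, the Banach fixed point theorem provides a unique $\muinfty \in \PX$ with $\K\muinfty = \muinfty$, which is automatically independent of the initial distribution. Iterating the contraction together with the trivial bound $\|\mu - \muinfty\|_{\mathrm{TV}} \leq 2$ yields the announced estimate $\|\mu_n - \muinfty\|_{\mathrm{TV}} \leq 2(1-\varepsilon^2)^n$. The main conceptual obstacle is the nonlinearity of $\K$ caused by the normalization $\mu(\Qdt^W\mathds{1})$, which prevents a direct invocation of the classical Dobrushin estimate available for linear Markov kernels; the centering substitution $\psi = \varphi - \K\mu(\varphi)\mathds{1}$ is precisely the algebraic device that linearizes the relevant difference so that the Doeblin minorization can be applied cleanly.
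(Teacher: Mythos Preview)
Your proof rests on the claim that the one–step map $\K$ is a strict contraction on $(\PX,\|\cdot\|_{\mathrm{TV}})$ with constant $1-\varepsilon^2$. This claim is false in general, and the argument you sketch does not establish it. The centering identity
\[
\K\nu(\varphi)-\K\mu(\varphi)=\frac{(\nu-\mu)(\Qdt^W\psi)}{\nu(\Qdt^W\mathds{1})},\qquad \psi=\varphi-\K\mu(\varphi),
\]
is correct, but when you bound the right–hand side you have to divide by $\nu(\Qdt^W\mathds{1})\geq\varepsilon$ while the oscillation of $\Qdt^W\psi$ is only controlled by $\varepsilon^{-1}-\varepsilon$; this yields at best a Lipschitz constant of order $\varepsilon^{-2}-1$, not $1-\varepsilon^2$. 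Invoking the Dobrushin coefficient of the normalized kernel $P(q,\cdot)=\Qdt^W(q,\cdot)/\Qdt^W\mathds{1}(q)$ does not help either: writing $\K\mu=(\Psi_g\mu)P$ with $g=\Qdt^W\mathds{1}$, the Markov part $P$ contracts by $1-\varepsilon^2$, but the Boltzmann--Gibbs reweighting $\Psi_g$ can \emph{expand} the total variation distance by a factor as large as $\varepsilon^{-2}$. A concrete two–point counterexample: take $\X=\{0,1\}$, $\eta=(\tfrac12,\tfrac12)$, $\Qdt^W(0,\cdot)=(\tfrac{1}{2\varepsilon},\tfrac{\varepsilon}{2})$ and $\Qdt^W(1,\cdot)=(\tfrac{\varepsilon}{2},\tfrac{\varepsilon}{2})$; then Assumption~\ref{as:minorization} holds, but the derivative of $p\mapsto\K((p,1-p))(\{0\})$ at $p=0$ equals $\tfrac{1-\varepsilon^2}{4\varepsilon^2}$, which exceeds $1$ for any $\varepsilon<\tfrac12$. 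So $\K$ is not a contraction and Banach's fixed point theorem cannot be applied directly.

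The paper circumvents this obstruction by a genuinely different route. Instead of iterating the nonlinear one–step map, it introduces the weights $g_n=(\Qdt^W)^n\mathds{1}$ and rewrites the $n$–step evolution as $\mu_n=(\mu T_n)K_n$, where $T_n$ is a (nonlinear) Boltzmann--Gibbs transform and $K_n=S_{n-1}\cdots S_0$ is a product of $n$ \emph{linear} Markov kernels $S_k(q,dy)=\Qdt^W(g_k\cdot)(q,dy)/\Qdt^W g_k(q)$. The whole nonlinearity is absorbed into the single factor $T_n$, which is bounded crudely by $\|\mu T_n-\nu T_n\|_{\mathrm{TV}}\leq 2$; all the geometric decay comes from the Dobrushin contraction of each $S_k$, whose ergodic coefficient is shown to be at least $\varepsilon^2$ using Assumption~\ref{as:minorization}. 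This backward decomposition is the key idea you are missing: it produces the bound $\|\Phi_{\Dt,n}^W(\mu)-\Phi_{\Dt,n}^W(\nu)\|_{\mathrm{TV}}\leq 2(1-\varepsilon^2)^n$ directly, and from there a Cauchy–sequence argument (rather than Banach's theorem) yields existence, uniqueness and~\eqref{eq:TVconvergence}.
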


\begin{remark}
\label{rem:minorization}
Let us emphasize that the prefactor $\varepsilon$ in~\eqref{eq:minorization} 
typically scales as $\Dt^{-\frac{d}{2}} \exp(-C_L/\Dt)$ for some constant $C_L > 0$.
Indeed, consider for instance the first order discretization~\eqref{eq:euler}.
Its transition kernel between $q$ and $q'$ reads
\[
\Qdt(q,dq')=\frac{1}{(2\pi \sigma^2\Dt)^{\frac{d}{2}}} \mathrm{exp}\left( -\frac{ (q' - q - b(q)\Dt)^2}
{2 \sigma^2 \Dt} \right)dq'.
\]
We then see that $\varepsilon$ scales at dominant order in~$\Dt$ as
$\Dt^{-\frac{d}{2}} \exp(-C_L/\Dt)$ for some constant $C_L > 0$ depending on~$\sigma$
and $\X$, independently on the drift $b$.
Thus, the choice of integrator should not affect significantly the value of~$\varepsilon$.
Note also that, if~$\Qdt^W$ satisfies a uniform version of~\eqref{eq:minorization} with an
additional strong Feller condition, it is possible to derive uniform in~$\Dt$ convergence estimates,
see~\cite[Section~3.3]{ferre2018more}.
\end{remark}

As a consequence of Theorem~\ref{theo:feynmankacdiscr}, if we define a discretization of the 
Feynman--Kac semigroup~\eqref{eq:feynmankac} satisfying Assumption~\ref{as:minorization}, the 
discrete dynamics~\eqref{eq:generaldiscr} admits an invariant probability measure solution to the fixed 
point equation~\eqref{eq:fixedpoint}. We denote by $\nuwdt$ this invariant probability measure to emphasize 
its dependence on both $W$ and the timestep $\Dt$. In view of~\eqref{eq:fixedpoint} and~\eqref{eq:etadynamics}, this measure satisfies the following stationarity equation:
\begin{equation}
\label{eq:stationarity}
\forall\, \varphi \in\S, \quad \intX \Qdt^W\varphi \, d\nuwdt =
\left( \intX \Qdt^W\mathds{1}\, d\nuwdt\right)\left( \intX \varphi\, d\nuwdt\right).
\end{equation}
In particular, if we define the approximate eigenvalue $\lambdadt$ by
\begin{equation}
\label{eq:lambdadt}
\e^{\Dt \lambdadt}= \intX \Qdt^W\mathds{1} \, d\nuwdt,
\end{equation}
then~\eqref{eq:stationarity} can be rewritten as:
\begin{equation}
\label{eq:shortstationarity}
\forall\, \varphi \in\S, \quad \intX\left[ \left(\frac{\Qdt^W -\e^{\Dt \lambdadt}}{\Dt}\right) 
\varphi\right] d\nuwdt = 0.
\end{equation}
This is the stationarity equation of the discretized process upon which the analysis in Section~\ref{sec:numana} is built. Let us emphasize that it involves the approximate eigenvalue $\lambdadt$ accounting for the rate of creation of probability of the discretized process, which differs in general from the largest eigenvalue $\lambda$ of the operator $\Lc+W$ (which accounts for the rate of creation of probability for the continuous process). The numerical analysis of the approximation $\lambdadt$ of $\lambda$ plays an important role in Section~\ref{sec:numana}.

\begin{remark}
In the case $W\equiv 0$, the measure $\nuwdt=\nu_{\Dt}$ is the invariant probability measure of the discretized process without reweighting, and the evolution operator $\Qdt$ conserves probability.
This also implies that $\lambdadt=0$. Therefore~\eqref{eq:shortstationarity} simplifies as
\[
\forall\, \varphi \in\S, 
\qquad 
\intX \left[\left(\frac{\Qdt -1}{\Dt}\right) \varphi \right]d\nu_{\Dt} = 0,
\]
which is the standard stationarity equation of the invariant probability measure for discretizations
of SDEs~\cite{leimkuhler2016computation,lelievre2016partial}. This is because the largest eigenvalue of the discretized evolution operator~$\Qdt$ is~$1$, as for the continuous semigroup~$P_t$. 
\end{remark}

%-------------------------------- Numerical Analysis --------------------------------------
\section{Numerical analysis of the discretization}
\label{sec:numana}

We now turn to the main section of the paper, where we quantify how close $\nuwdt$, the ergodic 
measure for the discrete Feynman--Kac dynamics, is from $\nuw$, the ergodic measure for its 
continuous counterpart. We also make precise the difference at leading order in~$\Dt$. Following
a general strategy to study the error on the invariant probability measure of discretizations
of stochastic processes dating back to~\cite{talay1990expansion} (see 
also~\cite{debussche2012weak,leimkuhler2016computation} as well as the 
review~\cite{lelievre2016partial} for recent accounts), we compare the evolution operator 
$\Qdt^W$ with the Feynman--Kac semigroup $\e^{\Dt(\Lc +W)}$. Although the non 
probability-conserving feature of the dynamics is an additional difficulty, we obtain in 
Section~\ref{sec:mainresults} results similar to those 
of~\cite{talay1990expansion,debussche2012weak,abdulle2014high,lelievre2016partial} concerning 
the error on the invariant probability measure. Moreover, we provide in
Section~\ref{sec:alternative} error bounds for estimators of the eigenvalue~$\lambda$. Finally,
we show how to relate the invariant probability measures of different schemes in
Section~\ref{sec:TU} and discuss in Section~\ref{sec:secondorder} how the Feynman--Kac
discretization essentially inherits the properties of the discretization of the underlying
unweighted dynamics.

%---------------- General theorem
\subsection{Error estimates on the invariant probability measure}
\label{sec:mainresults}

\subsubsection{Expansions of the discrete evolution operators $Q_{\Dt}^W$}
\label{sec:expdiscr}

For unweighted dynamics ($W=0$), consistency assumptions on the evolution operator $\Qdt$ characterizing the discretization rely on an expansion of~$\Qdt$ in powers of~$\Dt$ (see the presentation in~\cite{lelievre2016partial}). More precisely, it is assumed that there exist an integer $p\geq 1$ and differential operators $(\A_k)_{k=1,\hdots,p+1}$ such that the evolution operator $\Qdt$ of the discrete dynamics admits the following expansion: for all $\varphi\in\S$,
\begin{equation}
\label{eq:dlsemigroup}
Q_{\Dt}\varphi = \varphi + \Dt \A_1\varphi + \Dt^2\A_2\varphi +\hdots + \Dt^{p+1}\A_{p+1}\varphi +\Dt^{p+2}\Rm_{\Dt}\varphi.
\end{equation}
The differential operators $\A_k$ have finite order and smooth coefficients: for any 
$k \in \{1,\dots,p+1\}$, there exist $m_k \in \mathbb{N}$ and a familly of smooth functions 
$(a_{\alpha})_{|\alpha|\leq m_k}$ (with $\alpha=(\alpha_1,\hdots,\alpha_d)\in\N^d$) such that 
\begin{equation}
  \label{eq:differential_operators}
  \A_k = \sum_{|\alpha|\leq m_k} a_{\alpha} \partial^\alpha,
\end{equation}
where $\partial^\alpha = \partial_{q_1}^{\alpha_1} \dots \partial_{q_d}^{\alpha_d}$. 
Moreover, $\Rm_{\Dt}$ is an operator uniformly bounded in $\Dt$ in the following sense: there exist $\Dt^*>0$, $c>0$ and $m\in\N$ such that
\begin{equation}
\label{eq:uniformbound}
\forall\, \Dt \in (0,\Dt^*], \quad \forall\, \varphi\in\S,
\qquad 
\|\Rm_{\Dt} \varphi \|_{C^0} \leq c\, \|\varphi\|_{\C^m}, 
\end{equation} 
where
\begin{equation}
\label{eq:Cq}
\|\varphi\|_{\C^m} = \underset{|\alpha|\leq m} {\sup}\ \ \underset{q\in\X}{\sup}\ 
\left|\partial^\alpha \varphi(q)\right|.
\end{equation}
The assumptions~\eqref{eq:dlsemigroup} and~\eqref{eq:uniformbound} are standard for the numerical analysis of ergodic measures of SDEs~\cite{talay1990expansion,kopec2014weak,debussche2012weak,abdulle2012high,leimkuhler2016computation,lelievre2016partial}, and are satisfied for a wide range of explicit and implicit schemes defined on compact domains. A scheme is of weak order~$p$ when~\eqref{eq:dlsemigroup} holds with 
\[
\forall\, k \in \{1,\dots,p\}, \qquad \A_k = \frac{\Lc^k}{k!},
\]
see for instance~\cite{MT04}. Typically, $\A_1 = \Lc$ for any reasonable discretization scheme. 

Besides weak and strong errors, 
another notion of consistency is the error arising on the invariant probability measure, in situations 
when the Markov chain associated with~$\Qdt$ admits an invariant probability 
measure~$\nu_{\Dt}$. The error between averages with respect to~$\nu$ and~$\nu_{\Dt}$ are of 
order at least $\Dt^p$ when the scheme is weakly consistent of order~$p$. It can however be of 
higher order~$\Dt^{p'}$ (with $p' \geq p+1$) when  
\begin{equation}
  \label{eq:condition_avg_inv_meas_higher_order}
  \forall\, k \in \{1,\dots,p'\}, \quad \forall\, \varphi \in \S, \qquad \intX \A_k\varphi \, d\nu = 0. 
\end{equation}
This condition is satisfied by operators which are proportional to powers of $\Lc$. See however~\cite{AVZ15,leimkuhler2016computation} for examples of situations where $\A_k$ is not a power of~$\Lc$ but the above condition is met.

In the context of Feynman--Kac averages~\eqref{eq:feynmankac} where we consider approximations $\Qdt^W$ of $\e^{\Dt(\Lc+W)}$, we generalize the conditions~\eqref{eq:dlsemigroup} and~\eqref{eq:uniformbound} as follows. 

\begin{assumption}
\label{as:dlsemigroupW}
There exist an integer $p\geq 1$ and differential operators $(\At_k)_{k=1,\hdots,p+1}$ of the form~\eqref{eq:differential_operators} such that the evolution operator $\Qdt^W$ of the Feynman--Kac dynamics admits the following expansion: for all $\varphi\in\S$,
\begin{equation}
  \label{eq:dlsemigroupgeneral}
  Q_{\Dt}^W\varphi = \varphi + \Dt \At_1\varphi + \Dt^2\At_2\varphi
  +\hdots + \Dt^{p+1}\At_{p+1}\varphi +\Dt^{p+2}\Rm_{W,\Dt}\varphi,
\end{equation}
where $\Rm_{W,\Dt}$ is a uniformly bounded remainder in the sense of~\eqref{eq:uniformbound}. We also assume that $\At_1$ is such that
\begin{equation}
  \label{eq:consistentintegration}
  \At_1 = \A_1 + W, \qquad \A_1 \mathds{1} = 0,
\end{equation}
where $\A_1$ is a differential operator. In particular, $\At_1 \mathds{1} = W$.
\end{assumption}

Let us provide an example of such an expansion when $\Qdt^W$ is defined by~\eqref{eq:QW1}. 

\begin{lemma}
\label{prop:dlfeynmankac}
Assume that~\eqref{eq:dlsemigroup} and~\eqref{eq:uniformbound} hold with $\A_1 \mathds{1} = 0$, and define $\Qdt^W=\e^{\Dt W}\Qdt$. Then Assumption~\ref{as:dlsemigroupW} holds with, for all $k\in\{1,\hdots,p+1\}$,
\begin{equation}
  \label{eq:atilde}
  \At_k\varphi  = \sum_{m=0}^k \frac{W^m}{m!} \A_{k-m} \varphi.
\end{equation}
\end{lemma}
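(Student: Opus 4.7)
The plan is to Taylor-expand the multiplication operator $\e^{\Dt W}$ in powers of $\Dt$, multiply the result by the expansion~\eqref{eq:dlsemigroup} of $\Qdt$, and collect terms of equal order. Since $W \in \Cinfty(\X)$ and $\X$ is compact, $W$ and all its derivatives are bounded, so for $\Dt \in (0,\Dt^*]$ the scalar Taylor formula at order $p+1$ yields, pointwise in $q$,
\begin{equation*}
  \e^{\Dt W} = \sum_{m=0}^{p+1} \frac{\Dt^m W^m}{m!} + \Dt^{p+2} \rho_{W,\Dt},
\end{equation*}
where $\rho_{W,\Dt}$ is the multiplication operator by a smooth function whose $\C^k$ norm is uniformly bounded in $\Dt \in (0,\Dt^*]$ for every $k \in \N$.

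Next, I would substitute this expansion into $\Qdt^W \varphi = \e^{\Dt W}\Qdt\varphi$ using~\eqref{eq:dlsemigroup}, with the convention $\A_0 = \Id$. The Cauchy product of the two polynomial parts extracts the coefficient of $\Dt^k$, and for $0 \leq k \leq p+1$ this coefficient is exactly
\begin{equation*}
  \sum_{m=0}^{k} \frac{W^m}{m!}\, \A_{k-m}\varphi = \At_k \varphi,
\end{equation*}
which is the announced formula~\eqref{eq:atilde}. All remaining cross-terms in the product carry at least a factor $\Dt^{p+2}$ and can be regrouped into a single operator $\Rm_{W,\Dt}$, producing the expansion~\eqref{eq:dlsemigroupgeneral}.

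The main technical point is to check that this $\Rm_{W,\Dt}$ satisfies the uniform bound~\eqref{eq:uniformbound}. Each summand in the remainder has the form $W^m \Rm_\Dt \varphi$ with $m \leq p+1$, $\rho_{W,\Dt} \A_j \varphi$ with $0 \leq j \leq p+1$, or $\rho_{W,\Dt} \Rm_\Dt \varphi$, multiplied by a power of $\Dt$ at least $p+2$. Multiplication by a smooth bounded function is continuous from $\C^r$ to $\C^r$ by the Leibniz rule; each $\A_j$ is a differential operator of finite order with smooth coefficients; and $\Rm_\Dt$ satisfies~\eqref{eq:uniformbound}. Combining these facts with the compactness of $\X$ gives $\|\Rm_{W,\Dt}\varphi\|_{\C^0} \leq c'\,\|\varphi\|_{\C^{m'}}$ uniformly in $\Dt \in (0,\Dt^*]$, for some integer $m'$ depending on $p$ and on the orders of the $\A_k$. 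This is the only nontrivial piece of the argument, and it is mild.

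It remains to verify the consistency conditions~\eqref{eq:consistentintegration}. Reading off $k=1$ in the explicit formula gives $\At_1 = \frac{W^0}{0!}\A_1 + \frac{W}{1!}\A_0 = \A_1 + W$; the identity $\A_1 \mathds{1} = 0$ is assumed, hence $\At_1 \mathds{1} = W$, as required. The only obstacle in the argument is bookkeeping for the remainder term; no delicate analytic estimate enters beyond the smoothness of $W$ and the compactness of $\X$.
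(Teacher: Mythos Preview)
Your proof is correct and follows essentially the same approach as the paper: expand $\e^{\Dt W}$ as a Taylor polynomial with remainder, multiply by the expansion~\eqref{eq:dlsemigroup} of $\Qdt$, and collect terms via a Cauchy product. One minor bookkeeping omission: your list of remainder summands misses the cross-terms $\frac{W^m}{m!}\A_j\varphi$ with $m+j \geq p+2$ and $m,j \leq p+1$, but these are finite-order differential operators with smooth coefficients and thus bounded even more easily than the terms you do list.
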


\begin{proof}
The equality follows by expanding the exponential and taking the product with the semigroup 
expansion: there exist $\Dt^*$ and $K>0$ such that
\[
\begin{aligned}
  Q_{\Dt}^W\varphi & = \left( 1 + \Dt W + \frac{\Dt^2}{2} W^2 + \hdots + \frac{\Dt^{p+1}}{(p+1)!}W^{p+1}
+\Dt^{p+2} r_{\Dt,W} \right)
\\ & \qquad \times \left( \varphi + \Dt \A_1\varphi + \Dt^2\A_2\varphi +\hdots + \Dt^{p+1}\A_{p+1}\varphi +\Dt^{p+2}\Rm_{\Dt}\varphi \right),
\end{aligned}
\]
with $\|r_{\Dt,W}\|_{C^0} \leq K$ for $0 < \Dt \leq \Dt^*$. Gathering the terms of order $\Dt^k$ leads to~\eqref{eq:atilde} plus a uniformly bounded remainder, which proves the result. 
\end{proof}

Note that, in~\eqref{eq:atilde}, we obtain~$\At_1=\A_1+W$ where $\A_1$ is defined  
in~\eqref{eq:dlsemigroup}. However, there are other ways to construct Feynman--Kac schemes $\Qdt^W$, 
using for instance a splitting strategy. Let us give an example. Assume for instance that the operator $\Lc$ can be split in two parts: $\Lc=L_1+L_2$. We can then define a splitting scheme as $\Qdt=\e^{\Dt L_2}\e^{\Dt L_1}$, and, by discretizing the time integral of $W$ in three parts (using Simpson's rule) intertwinned with $\e^{\Dt L_2}$ and $\e^{\Dt L_1}$, 
\[
\Qdt^W= \e^{\frac{\Dt}{6} W}\e^{\Dt L_2}\e^{\frac{2\Dt}{3} W}\e^{\Dt L_1}\e^{\frac{\Dt}{6}W}.
\]
In this case, we see that the expansion of $\Qdt^W$ cannot be derived from the one for~$\Qdt$ (by a statement similar to~\eqref{eq:atilde}). The evolution operator $\Qdt^W$ nonetheless satisfies Assumption~\ref{as:dlsemigroupW}. 

\subsubsection{Statement of the main result}

Before stating our main theorem, we need to introduce the following technical assumptions.
\begin{assumption}[Stability]
\label{as:stability}
The operators $\A_1 +W -\lambda$ and $\A_1^* +W -\lambda$ are invertible on $\Sw$ and $\Shw$ respectively (in the sense made precise in Proposition~\ref{prop:inversibility}). 
\end{assumption}

In our setting, a crucial step of the proof consists in building an approximation of the eigenvector $\hh$ to solve an approximate eigenvalue problem for the operator $\Qdt^W$. This is an important difference compared to the case $W\equiv 0$, and requires the following assumption.

\begin{assumption}[Spectral consistency]
\label{as:spectralconsistency}
The operator $\A_1+W$, considered on $L^2(\nu)$, admits $\lambda$ as its largest eigenvalue, 
with associated eigenvector $\hh$: 
\[
(\A_1+W)\hh = \lambda \hh.
\]
\end{assumption}

Note that Assumptions~\ref{as:stability} and~\ref{as:spectralconsistency} are immediately met when the 
schemes are weakly consistent, \textit{i.e.} $\A_1=\Lc$, since 
Assumption~\ref{as:stability} is equivalent to Proposition~\ref{prop:inversibility} while 
Assumption~\ref{as:spectralconsistency} follows from Proposition~\ref{as:eigen}. However, it is possible in principle to construct numerical schemes for which $\A_1\neq \Lc$, in which case Assumptions~\ref{as:stability} and~\ref{as:spectralconsistency} should be checked directly. 

We are now in position to state our main result on the numerical discretization of 
Feynman--Kac semigroups, which makes precise error estimates à la Talay-Tubaro in the ergodic
setting.

\begin{theorem}
\label{theo:numana}
Suppose that Assumptions~\ref{as:dlsemigroupW},~\ref{as:stability} and~\ref{as:spectralconsistency} hold. Assume also that the operators $\At_k$ in~\eqref{eq:dlsemigroupgeneral} are such that, for all $k\in\{1,\hdots,p\}$, there exists $a_k\in\R$ with
\begin{equation}
\label{eq:invariance}
\forall\, \varphi \in \S, 
\qquad 
\int_{\X} \left(\At_k \varphi\right)\, d\nuw = a_k \intX \varphi\, d \nuw.
\end{equation} 
Define also $f\in \Sw$ as
\begin{equation}
\label{eq:leading}
f=f_0 - \intX f_0 \, d\nuw, \qquad
\left\{
\renewcommand{\arraystretch}{2.5} 
\begin{array}{l}
\dps (\A_1^*+W - \lambda)(\hw f_0) = \tilde{g}, \\ 
\dps \tilde{g} = -(\At_{p+1})^* \hw +  \hw \frac{\dps \intX \At_{p+1} \hh \, d\nuw}
{\dps \intX\hh d\nuw}\in\Shw.
\end{array}
\right.
\end{equation}
Then, there exist a timestep $\Dt^*>0$ and an operator $R_{W,\Dt}$ (uniformly bounded in $\Dt$ in the sense of~\eqref{eq:uniformbound}) such that, for any $0<\Dt\leq \Dt^*$ and any $\varphi\in\S$,
\begin{equation}
\label{eq:errordt}
\int_{\X} \varphi\, d\nuwdt = \int_{\X} \varphi\, d\nu_W + \Dt^p \int_{\X} \varphi f\, d\nuw
+\Dt^{p+1}R_{W,\Dt}\varphi.
\end{equation}
\end{theorem}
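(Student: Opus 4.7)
The plan is to exploit that the density $\psi_\Dt := d\nuwdt/d\nu$ is, by the stationarity equation~\eqref{eq:shortstationarity}, a right eigenvector of $(\Qdt^W)^*$ (the $L^2(\nu)$-adjoint) with eigenvalue $\e^{\Dt\lambdadt}$. I would construct an \emph{approximate} eigenpair $(\hat\psi_\Dt,\hat\lambdadt)$ that satisfies the eigenvalue equation up to an $\O(\Dt^{p+2})$ residual, then invoke perturbation theory to conclude that $\psi_\Dt=\hat\psi_\Dt+\O(\Dt^{p+1})$.

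First I would translate the invariance~\eqref{eq:invariance} into the adjoint statement $\At_k^* h_W=a_k h_W$ for $k=1,\dots,p$. For $k=1$ this reads $(\A_1^*+W-\lambda)h_W=0$ since $a_1=\lambda$ by Proposition~\ref{prop:intlambda}; by Assumption~\ref{as:stability}, $h_W$ spans the one-dimensional kernel of $\A_1^*+W-\lambda$ on $L^2(\nu)$. Combining these identities with the expansion of Assumption~\ref{as:dlsemigroupW} applied to $h_W$ gives
\[
(\Qdt^W)^*h_W = A_\Dt h_W + \Dt^{p+1}\At_{p+1}^*h_W + \O(\Dt^{p+2}),
\qquad A_\Dt := 1+\sum_{k=1}^{p}\Dt^k a_k,
\]
so the pair $(h_W,\mu_\Dt)$ with $\e^{\Dt\mu_\Dt}=A_\Dt$ already solves the eigenequation up to an $\O(\Dt^{p+1})$ defect.

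Second, I would refine this by the ansatz $\hat\psi_\Dt = h_W+\Dt^p f\,h_W$ and $\e^{\Dt\hat\lambdadt}=A_\Dt+\Dt^{p+1}c$. Substituting into $(\Qdt^W)^*\hat\psi_\Dt=\e^{\Dt\hat\lambdadt}\hat\psi_\Dt$ and collecting the terms of order $\Dt^{p+1}$ (noting that $\At_1^*(f h_W)=(\A_1^*+W)(f h_W)$ and $a_1=\lambda$) leads exactly to the Poisson equation
\[
(\A_1^*+W-\lambda)(f h_W) = c\, h_W - \At_{p+1}^* h_W.
\]
By Assumption~\ref{as:stability}, the right-hand side must lie in $\Shw$; this Fredholm-type compatibility condition pins down the constant $c$ as $c=\intX\At_{p+1}\hh\,d\nuw\big/\intX\hh\,d\nuw$ after moving the adjoint against $\hh$ and using $d\nuw = h_W\,d\nu$, which matches the expression for $\tilde g$ in~\eqref{eq:leading}. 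Assumption~\ref{as:stability} then produces $f_0$ with $(\A_1^*+W-\lambda)(f_0 h_W)=\tilde g$; the solution is unique up to the kernel, and renormalising $f:=f_0-\intX f_0\,d\nuw\in\Sw$ corresponds to enforcing $\intX \hat\psi_\Dt\,d\nu=1$ (using that $h_W$ is in the kernel so that $f h_W$ still solves the equation).

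Finally, I would close the argument by stability. By construction the residual $(\Qdt^W)^*\hat\psi_\Dt-\e^{\Dt\hat\lambdadt}\hat\psi_\Dt$ is of order $\Dt^{p+2}$, uniformly in the $\|\cdot\|_{\C^m}$ sense of~\eqref{eq:uniformbound}. Combined with the normalisation $\intX\psi_\Dt\,d\nu=1$ and the invertibility of $\A_1^*+W-\lambda$ on $\Shw$, a standard perturbation-of-isolated-eigenvalue argument (writing $\psi_\Dt=\hat\psi_\Dt+\Dt^{p+1}r_\Dt$, projecting the eigenequation onto the complement of $\mathrm{Span}(h_W)$ in $L^2(\nu)$, and inverting using Assumption~\ref{as:stability}) yields a bound on $r_\Dt$ uniform in $\Dt$. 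Pairing with $\varphi\in\S$ and rewriting $\intX\varphi\,\psi_\Dt\,d\nu = \intX\varphi\,d\nuw+\Dt^p\intX\varphi f\,d\nuw+\Dt^{p+1}R_{W,\Dt}\varphi$ gives~\eqref{eq:errordt}. The main obstacle is this last step: the minorisation of Assumption~\ref{as:minorization} only gives contraction rates of order $1-\O(\e^{-C/\Dt})$, far too weak for a clean perturbation argument, so one must instead lift the continuous spectral gap~\eqref{eq:deltaW} of $\Lc+W$ to a $\Dt$-uniform spectral gap for $\Qdt^W/\e^{\Dt\mu_\Dt}$, using the consistency statement of Assumption~\ref{as:dlsemigroupW} together with Kato-type perturbation theory for isolated eigenvalues; this is what allows one to convert the pointwise residual into an $L^\infty$-controlled correction $R_{W,\Dt}$ on test functions.
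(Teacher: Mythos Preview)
Your construction of the correction $f$ is correct and coincides with the paper's: the adjoint reformulation $\At_k^* h_W = a_k h_W$ for $k\le p$, the ansatz $(1+\Dt^p f)h_W$, the Fredholm condition fixing the constant $c$, and the resulting Poisson equation are exactly what underlies the paper's Lemma~\ref{lem:approxstationarity}. Where you diverge is in the closure step, and there the argument is not yet complete.

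The gap is the one you yourself flag. To pass from the $\O(\Dt^{p+2})$ residual in the approximate eigenequation to a bound on $\psi_\Dt-\hat\psi_\Dt$, you need to invert $(\Qdt^W)^*-\e^{\Dt\lambdadt}$ on a complement of $\mathrm{Span}(h_W)$, with operator norm of order $\Dt^{-1}$ uniformly in $\Dt$. Assumption~\ref{as:stability} only provides this for the limiting operator $\A_1^*+W-\lambda$, and the minorisation of Assumption~\ref{as:minorization} is, as you note, exponentially weak in $1/\Dt$. Your proposed Kato-type transfer of the spectral gap is plausible in spirit, but it is not free: one would need the adjoint expansion $(\Qdt^W)^* = \Id + \Dt\At_1^* + \cdots + \Dt^{p+2}\Rm_{W,\Dt}^*$ with $\Rm_{W,\Dt}^*$ controlled as an operator on a function space, whereas Assumption~\ref{as:dlsemigroupW} only bounds $\Rm_{W,\Dt}$ from $C^m$ to $C^0$, so its adjoint a priori maps into distributions. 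Making the perturbation argument rigorous would therefore require either strengthening Assumption~\ref{as:dlsemigroupW} or a separate regularity argument for $\psi_\Dt$.

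The paper circumvents both issues by never leaving the test-function side. Instead of inverting $(\Qdt^W)^*-\e^{\Dt\lambdadt}$ spectrally, it builds an \emph{algebraic} approximate right inverse $S_\Dt^W$ of $\Pi_W\big((\Qdt^W-\e^{\Dt\lambdatdt})/\Dt\big)\Pi_W$ by a truncated Neumann series in $\Dt$ around $\Pi_W(\A_1+W-\lambda)\Pi_W$ (Lemma~\ref{lem:approxoperator}); this uses only Assumption~\ref{as:stability} and keeps every remainder of the form $R_{W,\Dt}\varphi$ with $\varphi\in\S$. A projector identity (Lemma~\ref{lem:step2}) then converts integrals against $\nuwdt$ into integrals against $(1+\Dt^p f)\nuw$, up to a discrepancy involving $\e^{\Dt\lambdadt}-\e^{\Dt\lambdatdt}$. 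That discrepancy is handled separately by constructing an approximate eigenvector $\hwdt=\hh+\Dt u_1+\cdots+\Dt^p u_p$ of $\Qdt^W$ itself, with corrections at \emph{every} order $1,\dots,p$ (Lemma~\ref{lem:step3}); this is where Assumption~\ref{as:spectralconsistency} enters, and it yields the eigenvalue estimate~\eqref{eq:approxeigen} without any spectral theory for the discrete operator. Your approach, by contrast, stays on the adjoint side where $h_W$ is already exact to order~$p$, which is more economical conceptually but shifts the burden to the functional-analytic closure you leave open.
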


Note that the denominator in the second term on the right-hand side of the definition of~$\tilde{g}$ is positive thanks to Proposition~\ref{as:eigen}. In general, in~\eqref{eq:dlsemigroupgeneral}, we expect $\At_k$ to be $(\Lc+W)^k/k!$ (which corresponds to a scheme of weak order $k$), in which case~\eqref{eq:invariance} holds for $a_k=\lambda^k/k!\neq 0$ (see~\eqref{eq:a_k_lambda^k} below for a proof of the latter equality). This factor comes from the fact that $\Qdt^W$ does not
conserve probability. Indeed, for the evolution
operator $\Qdt$ of a Markovian dynamics, one always has
\[
\intX \Qdt\mathds{1} \, d\nuw = 1.
\]
On the other hand, considering~\eqref{eq:dlsemigroupgeneral} and applying~\eqref{eq:invariance} 
to $\varphi=\mathds{1}$ leads to
\[
\intX \Qdt^W \mathds{1}\, d\nuw = 1 + \Dt\, a_1 + \hdots + \Dt^p a_p +\Dt^{p+1}r_{W,\Dt},
\]
where $r_{W,\Dt}$ is a remainder term which is uniformly bounded for $\Dt$ sufficiently small. This is the manifestation at the discrete level of the fact that, over a timestep $\Dt$, the dynamics increases or decreases approximately the probability mass by a factor $\e^{\Dt \lambda}$. The relation~\eqref{eq:invariance} should be compared to the invariance relation~\eqref{eq:condition_avg_inv_meas_higher_order} for $W=0$.

\subsubsection{Proof of Theorem~\ref{theo:numana}}

The proof of Theorem~\ref{theo:numana} relies on four lemmas which allow to easily conclude the proof. We follow the same strategy as for the error analysis of the invariant probability measure proposed in~\cite{leimkuhler2016computation,lelievre2016partial} but additionnal technical difficulties arise due to the non-linearity of the stationarity equation~\eqref{eq:stationarity}. The first step (Lemma~\ref{lem:approxstationarity}) is to construct the leading correction  term $f$. We next use a projector in Lemma~\ref{lem:step2} to relate the exact stationary measure $\nuw$ and its approximation $\nuwdt$. An a priori estimate on the approximate eigenvalue defined in~\eqref{eq:lambdadt} is then provided in Lemma~\ref{lem:step3}. Finally, an approximate inverse operator is constructed in Lemma~\ref{lem:approxoperator}. In the proofs and also in the statements below, the remainders may change from line to line in the calculation, but we do not change the notation for convenience. There are two types of remainders: terms of the form $R_{W,\Dt}\varphi$ where $R_{W,\Dt}$ is a differential operator satisfying~\eqref{eq:uniformbound}, and functions $r_{W,\Dt}$ such that, for any $k \geq 1$, there is $K>0$ and $\Dt^*$ for which $\|r_{W,\Dt}\|_{C^k} \leq K$ when $0 < \Dt \leq \Dt^*$.

To begin with, we give the expression of the leading correction term $f$. It relies on
an approximate reformulation of~\eqref{eq:stationarity} which leads to an expression similar
to~\eqref{eq:shortstationarity} up to a remainder of order~$\Dt^{p+1}$.

\begin{lemma}
\label{lem:approxstationarity}
Under the assumptions of Theorem~\ref{theo:numana}, for any $\varphi\in\S$,
\begin{equation}
\label{eq:approxstationary}
\intX \left(\Qdt^W\varphi\right)(1+\Dt^p f)\, d\nuw 
=\left( \intX \Qdt^W\mathds{1}(1+\Dt^p f)\,d\nuw\right) \left( \intX 
\varphi\,(1+\Dt^p f)\,d\nuw \right) + \Dt^{p+2} 
R_{W,\Dt}\varphi,
\end{equation}
where $R_{W,\Dt}$ is a uniformly bounded remainder (in the sense of~\eqref{eq:uniformbound}) 
and $f$ is defined in~\eqref{eq:leading}.
\end{lemma}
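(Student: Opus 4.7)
The plan is to substitute the expansion~\eqref{eq:dlsemigroupgeneral} of $\Qdt^W$ into both sides of~\eqref{eq:approxstationary} and identify the residual order by order in $\Dt$ up to $\Dt^{p+1}$, showing it reduces exactly to the Poisson equation defining $f$ in~\eqref{eq:leading}. Setting $\At_0=\Id$ and $a_0=1$, and using repeatedly the invariance~\eqref{eq:invariance} together with $\intX f\,d\nuw=0$ and $\At_1\mathds{1}=W$, the expansion
\begin{align*}
\intX(\Qdt^W\varphi)(1+\Dt^p f)\,d\nuw &= \left(\sum_{k=0}^{p}\Dt^k a_k\right)\!\intX\!\varphi\,d\nuw + \Dt^p\!\intX\!\varphi f\,d\nuw \\
&\quad + \Dt^{p+1}\!\left[\intX\!\At_{p+1}\varphi\,d\nuw + \intX\!(\At_1\varphi)f\,d\nuw\right] + \O(\Dt^{p+2})
\end{align*}
holds, with a parallel expansion for the two factors on the right-hand side. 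Contributions of order $\Dt^k$ with $k\leq p$ cancel trivially, so only the $\Dt^{p+1}$ residual needs to be dealt with.

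Specializing~\eqref{eq:invariance} to $k=1$, $\varphi=\mathds{1}$ and invoking Proposition~\ref{prop:intlambda} yields $a_1=\lambda$. Writing $\At_1=\A_1+W$ and transferring the operators onto $\varphi$ via the $L^2(\nu)$ adjoint (using $d\nuw=\hw\,d\nu$), matching the two sides at order $\Dt^{p+1}$ for every $\varphi\in\S$ becomes equivalent to the Poisson equation
\[
(\A_1^*+W-\lambda)(f\hw) = -\At_{p+1}^*\hw + C(f)\,\hw, \qquad C(f) := \intX\At_{p+1}\mathds{1}\,d\nuw + \intX Wf\,d\nuw.
\]
This is the identity that must be verified for the $f$ prescribed by~\eqref{eq:leading}.

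Assumption~\ref{as:stability} guarantees invertibility of $\A_1^*+W-\lambda$ on $\Shw$, which forces $\hw$ to span the $L^2(\nu)$ kernel of $\A_1^*+W-\lambda$; in particular $(\A_1^*+W-\lambda)\hw=0$, so the shift $f_0\mapsto f=f_0-\intX f_0\,d\nuw$ leaves the left-hand side untouched. The Fredholm solvability condition $\tilde g\in\Shw$ for~\eqref{eq:leading} is verified by pairing with $\hh$ and using $(\A_1+W)\hh=\lambda\hh$ from Assumption~\ref{as:spectralconsistency}, which pins the constant in~\eqref{eq:leading} precisely to $C' := \intX\At_{p+1}\hh\,d\nuw/\intX\hh\,d\nuw$.

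The final step, and the main obstacle, is to establish the self-consistency identity $C(f)=C'$, which is a priori circular since $C(f)$ itself depends on $f$. The circularity is resolved by integrating the Poisson equation~\eqref{eq:leading} against $d\nu$: the term $\intX \A_1^*(\hw f_0)\,d\nu$ vanishes thanks to $\A_1\mathds{1}=0$, and $\intX W\,d\nuw=\lambda$ (Proposition~\ref{prop:intlambda}) yields $\intX Wf_0\,d\nuw-\lambda\intX f_0\,d\nuw = C'-\intX\At_{p+1}\mathds{1}\,d\nuw$; invoking the normalization $\intX f\,d\nuw=0$ then gives $\intX Wf\,d\nuw = C'-\intX\At_{p+1}\mathds{1}\,d\nuw$, i.e., $C(f)=C'$. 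Uniform control of the remainder $R_{W,\Dt}$ is inherited from~\eqref{eq:uniformbound} applied to each operator in the expansion, together with the smoothness of $f$ ensured by the smoothness of $\hw$, $\hh$ and the coefficients of the $\At_k$.
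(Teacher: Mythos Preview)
Your proof is correct and follows essentially the same route as the paper: expand both sides using~\eqref{eq:dlsemigroupgeneral} and~\eqref{eq:invariance}, observe that orders $\leq p$ cancel once $\intX f\,d\nuw=0$ is enforced, and reduce the order-$\Dt^{p+1}$ residual to the Poisson equation defining~$f$. The paper organizes this as ``derive the equation, then solve it'', whereas you verify that the prescribed~$f$ works; your explicit self-consistency check $C(f)=C'$ is the same calculation the paper carries out later, in the proof of Lemma~\ref{lem:step3} (terminal case $k=p$).

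One point of justification needs correcting. The assertion that Assumption~\ref{as:stability} ``forces $\hw$ to span the $L^2(\nu)$ kernel of $\A_1^*+W-\lambda$'' does not follow from invertibility on~$\Shw$ alone: that hypothesis only says the restriction to~$\Shw$ is a bijection onto~$\Shw$, which does not by itself place $\hw$ in the kernel. What actually yields $(\A_1^*+W-\lambda)\hw=0$ is the invariance relation~\eqref{eq:invariance} for $k=1$: it states $\intX(\At_1\varphi)\hw\,d\nu=a_1\intX\varphi\hw\,d\nu$ for every $\varphi\in\S$, hence by duality $(\At_1)^*\hw=a_1\hw$; since $(\At_1)^*=\A_1^*+W$ and $a_1=\intX W\,d\nuw=\lambda$ by Proposition~\ref{prop:intlambda}, the identity follows. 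With this replacement your argument goes through unchanged.
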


The proof of this lemma is presented in Section~\ref{sec:approxstationarity}.
Defining the approximate eigenvalue $\lambdatdt$ by
\begin{equation}
\label{eq:tildelambdadt}
\e^{\Dt \lambdatdt}=\intX \Qdt^W\mathds{1}(1+\Dt^p f)\, d\nuw,
\end{equation}
\eqref{eq:approxstationary} can be rewritten as
\begin{equation}
\label{eq:shortapproxstationary}
\intX \left[ \left( \frac{\Qdt^W - \e^{\Dt \lambdatdt}}{\Dt}\right) \varphi \right]
(1 +\Dt^p f)\, d\nuw = \Dt^{p+1}R_{W,\Dt}\varphi.
\end{equation}
This expression allows to identify the leading order correction term $\Dt^p f$ in $\nu_{W,\Dt} - \nu_W$ and can be thought of as the  approximate counterpart of~\eqref{eq:shortstationarity}. The second step is to use a projector that on the one hand stabilizes in $\Sw$ the operator appearing in~\eqref{eq:shortapproxstationary}, and on the other hand relates the exact stationary measure $\nuw$ and its approximation~$\nuwdt$. For this we introduce the following projectors: for all $\phi \in \S$,
\begin{equation}
\label{eq:proj}
\Pi\phi = \phi - \intX \phi\, d\nu, 
\qquad 
\Pi_W\phi = \phi - \intX\phi\, d\nu_W.
\end{equation}
The operator $\Pi$ is the $L^2(\nu)$ orthogonal projector on $L_0^2(\nu)$, while $\Pi_W$ is a projector on $L_W^2(\nu)$ which is not orthogonal for the canonical scalar product on $L^2(\nu)$. However, it is orthogonal on $L^2(\nuw)$, so that, for all $\psi$, $\phi\in\S$,
\begin{equation}
\label{eq;sym}
\intX (\Pi_W \psi) \phi\, d\nuw = \intX \psi (\Pi_W \phi)\, d\nuw.
\end{equation}
We can then show the following result, whose proof can be found in Section~\ref{sec:step2}.

\begin{lemma}
\label{lem:step2}
Under the assumptions of Theorem~\ref{theo:numana}, it holds, for any $\phi\in\S$,
\begin{equation}
\label{eq:step2}
\begin{aligned}
 \intX\left[\Pi_W \left(\frac{\Qdt^W -\e^{\Dt \lambdadt}}{\Dt}\right) 
\Pi_W\phi\right] d\nuwdt  =& \intX\left[\Pi_W \left(\frac{\Qdt^W 
-\e^{\Dt \lambdatdt}}{\Dt}\right) 
\Pi_W\phi\right](1+\Dt^p f)\, d\nuw \\
&   +\Dt^{p+1}R_{W,\Dt}\phi,  
\end{aligned}
\end{equation}
where $R_{W,\Dt}$ is a uniformly bounded remainder in the sense of~\eqref{eq:uniformbound}.
\end{lemma}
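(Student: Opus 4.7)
The plan is to evaluate each side of~\eqref{eq:step2} independently by invoking a different stationarity relation, and then to observe that the discrepancy between the two approximate eigenvalues $\lambdadt$ and $\lambdatdt$ is harmless because it ends up multiplying an integral of $\Pi_W\phi$ against $\nu_W$, which vanishes by definition of the projector. This reduces the whole statement to a compact bookkeeping identity.

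To handle the left-hand side, I peel off the outer $\Pi_W$ using $\Pi_W\psi = \psi - \intX \psi\, d\nu_W$. The resulting difference $\intX (A\Pi_W\phi)\,d\nuwdt - \intX (A\Pi_W\phi)\,d\nu_W$, with $A = (\Qdt^W - \e^{\Dt\lambdadt})/\Dt$, has its first term killed exactly by the stationarity equation~\eqref{eq:shortstationarity} applied to the test function $\Pi_W\phi \in \S$. I treat the right-hand side in parallel: expanding the outer $\Pi_W$, and noting that $\intX(1+\Dt^p f)\,d\nu_W = 1$ because $f\in\Sw$, produces $\intX (B\Pi_W\phi)(1+\Dt^p f)\,d\nu_W - \intX (B\Pi_W\phi)\,d\nu_W$ with $B = (\Qdt^W - \e^{\Dt\lambdatdt})/\Dt$; the approximate stationarity~\eqref{eq:shortapproxstationary}, applied again to $\Pi_W\phi$, converts the first integral into a uniformly bounded remainder of size $\Dt^{p+1}$ in the sense of~\eqref{eq:uniformbound}.

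Subtracting the two reductions leaves only $\intX (B-A)\Pi_W\phi\, d\nu_W$ modulo the remainder. Now $B-A$ is multiplication by the \emph{scalar} $(\e^{\Dt\lambdadt}-\e^{\Dt\lambdatdt})/\Dt$, so this integral equals that scalar times $\intX \Pi_W\phi\,d\nu_W = 0$. This cancellation, which is precisely what makes the outer choice $\Pi_W$ (rather than $\Pi$) crucial, is the conceptual heart of the lemma.

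There are really only two hygiene points to check, and neither is an obstacle: first, $\Pi_W\phi$ stays in $\S$ since $\Pi_W$ only subtracts a constant, so both stationarity identities legitimately apply to it; second, the subtracted constant inside the application of $R_{W,\Dt}$ in~\eqref{eq:shortapproxstationary} only contributes a multiple of $R_{W,\Dt}\mathds{1}$, which is bounded uniformly in $\Dt$ and can be absorbed into a final operator $R_{W,\Dt}\phi$ of the claimed form. I do not expect a genuine difficulty in this step; the main thing to get right is ordering the two stationarity relations so that the potentially dangerous difference $\lambdadt-\lambdatdt$ is quarantined against the zero-mean function $\Pi_W\phi$.
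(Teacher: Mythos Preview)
Your proposal is correct and follows essentially the same route as the paper's own proof: both arguments expand the outer $\Pi_W$ on each side, invoke the exact stationarity~\eqref{eq:shortstationarity} and the approximate stationarity~\eqref{eq:shortapproxstationary} (applied to $\Pi_W\phi$) to collapse one integral on each side, and then observe that the residual discrepancy is the scalar $(\e^{\Dt\lambdadt}-\e^{\Dt\lambdatdt})/\Dt$ multiplied by $\intX \Pi_W\phi\,d\nu_W = 0$. Your hygiene remarks about $\Pi_W\phi\in\S$ and absorbing the remainder are exactly the minor points the paper handles implicitly.
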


Here, we see that two different operators appear inside the integrals because the factors $\e^{\Dt \lambdadt}$ and $\e^{\Dt \lambdatdt}$ are different. The next lemma shows that these quantities are the same up to terms of order $\Dt^{p+2}$. As mentioned earlier, this is an important difference with the analysis in the case $W = 0$. Some a priori estimate on the eigenvalue is required to conclude the proof, whereas, for the unweighted case, the largest eigenvalue of the evolution operator is $1$ with eigenvector $\mathds{1}$ both for the continuous process and its discretization. The proof, provided in Section~\ref{sec:step3}, relies on building an approximate eigenfunction for the operator $\Qdt^W$. Similar estimates were obtained in the Diffusion Monte Carlo context in analytically solvable cases in~\cite{mella2000time}. 

\begin{lemma}
\label{lem:step3}
Under the assumptions of Theorem~\ref{theo:numana}, there exist $\Dt^*>0$, $c>0$ and 
functions $u_1,\hdots,u_p\in\Sw$ such that the function $\hwdt = \hh + \Dt\, u_1+\hdots 
+ \Dt^p\, u_p$ satisfies
\begin{equation}
\label{eq:approxeigenQw}
\Qdt^W \hwdt = \e^{\Dt \lambdatdt}\hwdt + \Dt^{p+2} r_{W,\Dt}, 
\qquad 
\intX \hwdt \, d\nu = 1,
\end{equation}
where $\|r_{W,\Dt}\|_{C^0} \leq c$ for all $0<\Dt \leq \Dt^*$. As a consequence, there exist $\Dt'$ and $C>0$ such that
\begin{equation}
\label{eq:approxeigen}
\e^{\Dt \lambdadt} = \e^{\Dt \lambdatdt} + \Dt^{p+2}\tilde{r}_{W,\Dt},
\end{equation}
with $| \tilde{r}_{W,\Dt} | \leq C$ for all $0<\Dt\leq \Dt'$.
\end{lemma}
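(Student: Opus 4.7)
The plan is to construct an approximate eigenfunction $\hwdt$ of $\Qdt^W$ by an order-by-order expansion in $\Dt$, and then deduce \eqref{eq:approxeigen} by integrating the approximate eigenvalue relation against $\nuwdt$ and invoking the stationarity identity \eqref{eq:stationarity}. As a preliminary, I expand $\lambdatdt$: combining \eqref{eq:tildelambdadt} with \eqref{eq:dlsemigroupgeneral}, the identity $\At_1\mathds{1} = W$ from \eqref{eq:consistentintegration}, Proposition~\ref{prop:intlambda}, the invariance relations \eqref{eq:invariance}, and $\intX f\, d\nuw = 0$, one obtains
\begin{equation*}
e^{\Dt\lambdatdt} = 1 + \Dt\lambda + \sum_{k=2}^{p}\Dt^k a_k + \Dt^{p+1}\left( \intX \At_{p+1}\mathds{1}\, d\nuw + \intX W f\, d\nuw\right) + \O(\Dt^{p+2}),
\end{equation*}
which I write compactly as $e^{\Dt\lambdatdt} = \sum_{j=0}^{p+1}\Dt^j \beta_j + \O(\Dt^{p+2})$.

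I then seek $\hwdt = \hh + \sum_{k=1}^{p}\Dt^k u_k$ with $u_k \in \Sw$. Inserting this ansatz in the residual $\Qdt^W \hwdt - e^{\Dt\lambdatdt}\hwdt$ and matching the coefficient of $\Dt^m$: orders $0$ and $1$ are trivial (the latter reduces to $(\At_1 - \lambda)\hh = 0$, which holds by Assumption~\ref{as:spectralconsistency}), while for $m \in \{2,\ldots,p+1\}$ I obtain the Poisson equation
\begin{equation*}
(\At_1 - \lambda)\, u_{m-1} = \sum_{j=2}^{m}(\beta_j - \At_j)\, u_{m-j}, \qquad u_0 := \hh.
\end{equation*}
Assumption~\ref{as:stability} supplies a unique solution $u_{m-1} \in \Sw$ as soon as the right-hand side belongs to $\Sw$; elliptic regularity then yields smoothness.

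The heart of the proof is verifying this solvability. Since $u_{m-j} \in \Sw$ for $1 \leq m-j \leq p-1$, relation \eqref{eq:invariance} gives $\intX \At_j u_{m-j}\, d\nuw = a_j \intX u_{m-j}\, d\nuw = 0$ for each $j \in \{2,\ldots,p\}$, so only the term $j = m$ (involving $u_0 = \hh$) survives, leading to the compatibility condition
\begin{equation*}
\beta_m = \frac{\intX \At_m \hh\, d\nuw}{\intX \hh\, d\nuw}.
\end{equation*}
For $m \leq p$, a second use of \eqref{eq:invariance} applied to $\hh$ gives $\beta_m = a_m$, matching the expansion of $\lambdatdt$ above. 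The delicate case is $m = p+1$: one must check that $\intX \At_{p+1}\hh\, d\nuw / \intX \hh\, d\nuw$ equals $\intX \At_{p+1}\mathds{1}\, d\nuw + \intX W f\, d\nuw$. I would establish this by pairing the defining Poisson equation \eqref{eq:leading} for $f_0$ against $\mathds{1}$ in $L^2(\nu)$: the adjoint identity $\intX (\At_1^*\psi)\, d\nu = \intX \psi W\, d\nu$ (which follows from $\At_1\mathds{1} = W$) transforms the left-hand side into $\intX W f_0\, d\nuw - \lambda \intX f_0\, d\nuw$, and the constant $\intX f_0\, d\nuw$ is reabsorbed via $f = f_0 - \intX f_0\, d\nuw$. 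This is the main obstacle and precisely the reason for the form chosen for $\tilde{g}$ in \eqref{eq:leading}.

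With $\hwdt$ in hand (normalized by an overall factor $1/\intX \hwdt\, d\nu = 1 + \O(\Dt)$, whose contribution to the approximate eigenvalue equation is of order $\Dt^{p+2}$ and absorbed in $r_{W,\Dt}$), we have $\Qdt^W \hwdt = e^{\Dt\lambdatdt}\hwdt + \Dt^{p+2} r_{W,\Dt}$ with $\intX \hwdt\, d\nu = 1$. Integrating against $\nuwdt$ and applying \eqref{eq:stationarity} on the left gives
\begin{equation*}
e^{\Dt\lambdadt} \intX \hwdt\, d\nuwdt = e^{\Dt\lambdatdt} \intX \hwdt\, d\nuwdt + \Dt^{p+2} \tilde{r}_{W,\Dt}.
\end{equation*}
Since $\hh > 0$ on the compact set $\X$ and the $u_k$ are smooth, $\intX \hwdt\, d\nuwdt$ is bounded below by a positive constant uniformly in $\Dt$ small; dividing yields \eqref{eq:approxeigen}.
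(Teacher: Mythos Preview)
Your proposal is correct and follows essentially the same route as the paper's proof: an order-by-order construction of the approximate eigenvector, with the solvability at each step reduced to checking that the right-hand side lies in $\Sw$, the terminal compatibility condition at order $p+1$ verified by integrating the Poisson equation~\eqref{eq:leading} for $f_0$, and the estimate~\eqref{eq:approxeigen} obtained by integrating against $\nuwdt$ and invoking the stationarity relation together with the strict positivity of $\hh$. The only cosmetic difference is that you pair~\eqref{eq:leading} with $\mathds{1}$ via the adjoint identity $\intX \At_1^*\psi\, d\nu = \intX \psi W\, d\nu$, whereas the paper writes things in terms of $\A_1^*$ and uses $\A_1\mathds{1}=0$; the two computations are equivalent.
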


Once we have reached this point, it is possible to replace the eigenvalue 
$\e^{\Dt\lambdadt}$ by $\e^{\Dt\lambdatdt}$ in Lemma~\ref{lem:step2}.
The last step is to build an approximate inverse of the operator
\[
\Pi_W \left( \frac{\Qdt^W - \e^{\Dt \lambdatdt}}{\Dt} \right) \Pi_W,
\]
as provided in the next lemma (see Section~\ref{sec:approxoperator} for the proof).

\begin{lemma}
\label{lem:approxoperator}
Under the assumptions of Theorem~\ref{theo:numana}, for any $0<\Dt \leq \Dt^*$, there is an 
operator $S_{\Dt}^W:\Sc \to \Sc$ for which 
\begin{equation}
\label{eq:approxoperator}
\forall\,\varphi\in\Sw, 
\qquad
\Pi_W \left( \frac{\Qdt^W - \e^{\Dt \lambdatdt}}{\Dt} \right) \Pi_W S_{\Dt}^W\varphi = \varphi + \Dt^{p+1}R_{W,\Dt}\varphi,
\end{equation}
where $R_{W,\Dt}:\Sc \to \Sc$ is a uniformly bounded remainder in the sense
of~\eqref{eq:uniformbound}, and $S_{\Dt}^W$ admits the following uniform bounds: for
any $k \geq 0$, there exist $K > 0$ and $m \in \mathbb{N}$ (depending on~$k$) such that 
\[
\forall\, \Dt \in (0,\Dt^*], \qquad \left\|S_{\Dt}^W\varphi\right\|_{\C^k} \leq K \left\|\varphi\right\|_{\C^m}.
\]
\end{lemma}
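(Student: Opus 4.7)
The plan is to construct $S_{\Dt}^W$ as a truncated formal power series in $\Dt$ up to order $p$ and then verify by order-by-order matching that it inverts the operator $\Pi_W M_\Dt \Pi_W$ to the required precision, where
\[
M_\Dt := \frac{\Qdt^W - \e^{\Dt\lambdatdt}}{\Dt}.
\]
First I would expand $M_\Dt = \sum_{k=0}^{p} \Dt^k M_k + \Dt^{p+1} R^{(0)}_{W,\Dt}$, where each $M_k$ is a differential operator with smooth coefficients. Combining Assumption~\ref{as:dlsemigroupW} with the scalar expansion of $\e^{\Dt\lambdatdt}$, obtained from~\eqref{eq:tildelambdadt} using~\eqref{eq:invariance}, the identity $\intX f\,d\nuw = 0$, and the relation $a_1 = \intX W\,d\nuw = \lambda$ of Proposition~\ref{prop:intlambda}, yields in particular $M_0 = \At_1 - \lambda\,\Id = \A_1 + W - \lambda$.

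Next I would build $S_{\Dt}^W = \sum_{j=0}^{p} \Dt^j S_j$ iteratively so that each $S_j$ maps $\Sw$ into $\Sw$. The leading term $S_0 := (\A_1 + W - \lambda)^{-1}$ is well-defined on $\Sw$ by Assumption~\ref{as:stability} and satisfies $\Pi_W M_0 \Pi_W S_0 \varphi = \Pi_W \varphi = \varphi$ for $\varphi \in \Sw$. For $n \in \{1,\dots,p\}$, I would set
\[
S_n \varphi = -(\A_1 + W - \lambda)^{-1}\, \Pi_W \sum_{k=1}^{n} M_k\, S_{n-k}\varphi,
\]
which lies in $\Sw$ because $\Pi_W$ projects onto $\Sw$, where the resolvent is available. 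A direct computation then shows that for every $n\in\{1,\dots,p\}$,
\[
M_0 S_n\varphi + \sum_{k=1}^{n} M_k S_{n-k}\varphi = (\Id - \Pi_W)\sum_{k=1}^{n} M_k S_{n-k}\varphi
\]
is a constant function, which is annihilated by the outer $\Pi_W$. The remaining terms of combined order $\geq p+1$ and the contribution of $\Dt^{p+1} R^{(0)}_{W,\Dt}$ gather into $\Dt^{p+1} R_{W,\Dt}\varphi$, which proves~\eqref{eq:approxoperator}.

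Uniform bounds on $S_{\Dt}^W$ in the $C^k$ norms would then follow from iterating, a finite number of times, the standard elliptic regularity estimate for the resolvent $(\A_1 + W - \lambda)^{-1}$ (loss of only finitely many derivatives, by the same arguments as in Proposition~\ref{prop:inversibility}), together with the continuity of each $M_k$ and of $\Pi_W$ between appropriate $C^\ell$ spaces. The same reasoning yields the uniform bound~\eqref{eq:uniformbound} on the remainder $R_{W,\Dt}$.

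The main obstacle is the bookkeeping needed to keep everything inside the correct spaces: the resolvent $(\A_1 + W - \lambda)^{-1}$ is only defined on $\Sw$, which forces the insertion of $\Pi_W$ inside each $S_n$, and one must simultaneously check that this does not spoil the matching of orders in $\Dt$ in the final expansion (the key point being that each non-cancelled residue is a constant, hence killed by the outer projector). A related delicate step is that $\lambdatdt$ itself depends on $\Dt$, so one must first derive from~\eqref{eq:tildelambdadt} a genuine power-series expansion of $\lambdatdt$ up to order $\Dt^{p-1}$ with $\Dt$-independent coefficients; otherwise the $M_k$ appearing in the expansion of $M_\Dt$ would fail to be honest differential operators and the order-by-order matching would break down.
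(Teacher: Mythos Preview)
Your proposal is correct and follows essentially the same approach as the paper. The paper expands $\Pi_W\big((\Qdt^W-\e^{\Dt\lambdatdt})/\Dt\big)\Pi_W = A + \Dt B_\Dt + \Dt^{p+1}R_{W,\Dt}$ with $A=\Pi_W(\A_1+W-\lambda)\Pi_W$ and then writes the approximate inverse as the truncated Neumann series $A^{-1}\sum_{n=0}^p(-1)^n(B_\Dt A^{-1})^n$, retaining terms up to order $\Dt^p$; your recursive definition $S_n=-(\A_1+W-\lambda)^{-1}\Pi_W\sum_{k=1}^n M_k S_{n-k}$ is exactly the term-by-term unfolding of that same Neumann series, and your observation that the non-cancelled residues are constants killed by the outer $\Pi_W$ is the explicit version of what the paper leaves implicit in ``it is then easy to check that~\eqref{eq:approxoperator} is satisfied.''
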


We now have all the tools to prove Theorem~\ref{theo:numana}. First, plugging the 
estimate~\eqref{eq:approxeigen} obtained in Lemma~\ref{lem:step3} in the error 
expansion~\eqref{eq:step2} obtained in Lemma~\ref{lem:step2} leads to, for any $\phi\in\S$,
\begin{equation}
  \label{eq:laststep}
  \begin{aligned}
    \intX\left[\Pi_W \left(\frac{\Qdt^W -\e^{\Dt \lambdatdt}}{\Dt}\right) 
      \Pi_W\phi\right] d\nuwdt & = \dps \intX\left[\Pi_W \left(\frac{\Qdt^W 
        -\e^{\Dt \lambdatdt}}{\Dt}\right) 
      \Pi_W\phi\right](1+\Dt^p f)\, d\nuw \dps \\
    & \quad +\Dt^{p+1} R_{W,\Dt}\phi,
  \end{aligned}
\end{equation}
where $R_{W,\Dt}$ satisfies~\eqref{eq:uniformbound}. We next consider the approximate inverse 
operator $S_{\Dt}^W$ built in Lemma~\ref{lem:approxoperator}, and set 
$\phi=S_{\Dt}^W\Pi_W \varphi$ in~\eqref{eq:laststep}. Therefore, for any $\varphi \in \S$,
\[
\intX (\Pi_W\varphi)\, d\nuwdt = \intX (\Pi_W \varphi) (1+\Dt^p f)\, d\nuw + \Dt^{p+1} 
\widetilde{R}_{W,\Dt}\varphi=  \Dt^p \intX (\Pi_W \varphi)   f \, d\nuw + \Dt^{p+1} 
\widetilde{R}_{W,\Dt}\varphi,
\]
where $\widetilde{R}_{W,\Dt}$ satisfies~\eqref{eq:uniformbound}. Since $f$ has average $0$ with respect to $\nuw$, this gives
\[
\intX \varphi\, d\nuwdt = \intX \varphi\, d\nuw + \Dt^p \intX \varphi f \, d\nuw +\Dt^{p+1} \widetilde{R}_{W,\Dt}\varphi,
\]
which concludes the proof of Theorem~\ref{theo:numana}.

%---------- Alternative formula for eigenvalue
\subsection{Alternative error estimate for the principal eigenvalue}
\label{sec:alternative}

We present in this section a useful application of Theorem~\ref{theo:numana}, which provides an 
error estimate for the approximation of the principal eigenvalue~$\lambda$ of the 
operator $\Lc + W$. The choice $\varphi \equiv W$ allows to compute this eigenvalue by 
ergodic averages, as shown in Proposition~\ref{prop:intlambda} and Corollary~\ref{cor:eigencv}. As a result, this eigenvalue can be approximated using Theorem~\ref{theo:numana}, whose application to $\varphi\equiv W$ gives
\[
\intX W\, d\nuwdt = \lambda + \Dt^p \intX Wf\, d\nuw + \Dt^{p+1} r_{W,\Dt},
\]
where $r_{W,\Dt}$ is uniformly bounded for $\Dt$ small enough. Although this formula can be 
used in simulations to estimate $\lambda$, we present an error estimate for an alternative 
approximation more commonly used in practice. We will also see in Section~\ref{sec:secondorder} 
that this alternative 
formula can be more accurate than the estimate based on averaging~$W$.

\begin{theorem}
\label{theo:average}
Suppose that Assumption~\ref{as:dlsemigroupW} holds, with a numerical scheme consistent at order $p$ (that is, $\At_k=(\Lc+W)^k/k!$ for $1\leq k \leq p$). Then there exist $\Dt^*>0$ and $C>0$ such that
\begin{equation}
\label{eq:loglambda}
\lambdadt=\frac{1}{\Dt}\log \left[\intX \Qdt^W\mathds{1}\, d\nuwdt \right]
= \lambda +\Dt^{p}\left(\lambda_{p+1}-\frac{\lambda^{p+1}}{(p+1)!}\right) 
+ \Dt^{p+1} r_{\Dt,W},
\end{equation}
with $|r_{\Dt,W}|\leq C$ for any $0< \Dt \leq \Dt^*$, and
\begin{equation}
\label{eq:lambdap1}
\lambda_{p+1}= \intX \At_{p+1}\mathds{1}\,d\nuw +  \intX Wf\, d\nuw.
\end{equation}
\end{theorem}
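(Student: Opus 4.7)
The plan is to expand $\intX \Qdt^W \mathds{1}\, d\nuwdt$ as a power series in $\Dt$ and then take the logarithm. First I would apply Theorem~\ref{theo:numana} with the test function $\varphi = \Qdt^W \mathds{1}$ to convert the integral against $\nuwdt$ into an integral against $\nuw$ (with an $f$-correction of order $\Dt^p$ and a remainder of order $\Dt^{p+1}$). Since $\Qdt^W \mathds{1}$ and all its derivatives are uniformly bounded in $\Dt$ by Assumption~\ref{as:dlsemigroupW}, the remainder term from Theorem~\ref{theo:numana} applied to this (\(\Dt\)-dependent) test function is genuinely $\O(\Dt^{p+1})$.

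Next, I would substitute the expansion $\Qdt^W \mathds{1} = \mathds{1} + \sum_{k=1}^{p+1} \Dt^k \At_k \mathds{1} + \O(\Dt^{p+2})$ on both sides. The $\Dt^p$-correction $\Dt^p \intX (\Qdt^W \mathds{1}) f\, d\nuw$ simplifies because $\intX f\, d\nuw = 0$, leaving the contribution $\Dt^{p+1} \intX (\At_1 \mathds{1}) f\, d\nuw = \Dt^{p+1} \intX W f\, d\nuw$ at leading order (using $\At_1\mathds{1} = W$). For the main integral $\intX \Qdt^W \mathds{1}\, d\nuw$, the key identity is
\begin{equation*}
\intX \At_k \mathds{1}\, d\nuw = \frac{1}{k!}\intX (\Lc+W)^k \mathds{1}\, d\nuw = \frac{\lambda^k}{k!}, \qquad 1 \le k \le p,
\end{equation*}
which follows by taking the $L^2(\nu)$-adjoint, using $(\Lc+W)^* = \Lc^* + W$ and the eigenvalue relation $(\Lc^* + W)h_W = \lambda h_W$, together with $\intX h_W\, d\nu = 1$. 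Combining these pieces yields
\begin{equation*}
\intX \Qdt^W \mathds{1}\, d\nuwdt = \sum_{k=0}^{p} \frac{(\Dt \lambda)^k}{k!} + \Dt^{p+1}\!\left(\intX \At_{p+1}\mathds{1}\, d\nuw + \intX W f\, d\nuw\right) + \O(\Dt^{p+2}).
\end{equation*}

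Finally, I would recognize the first sum as the truncated Taylor expansion of $\e^{\Dt\lambda}$ up to order $\Dt^{p+1}\lambda^{p+1}/(p+1)!$, so that
\begin{equation*}
\intX \Qdt^W \mathds{1}\, d\nuwdt = \e^{\Dt \lambda}\left[1 + \Dt^{p+1}\!\left(\lambda_{p+1} - \frac{\lambda^{p+1}}{(p+1)!}\right) + \O(\Dt^{p+2})\right].
\end{equation*}
Taking the logarithm (which is justified since the bracketed expression is close to $1$ for $\Dt$ small) and dividing by $\Dt$ produces the announced expansion for $\lambdadt$, the $\log(1+x) = x + \O(x^2)$ expansion absorbing higher-order contributions into $\Dt^{p+1} r_{\Dt,W}$.

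The main obstacle I anticipate is bookkeeping rather than conceptual: one has to be careful that the $\Dt^p$ correction from Theorem~\ref{theo:numana}, when combined with the $\Dt$-expansion of $\Qdt^W\mathds{1}$, produces a single coherent $\Dt^{p+1}$ term (the $\intX Wf\, d\nuw$ contribution) and that all other cross-terms are of order $\Dt^{p+2}$ or higher. The essential algebraic input — that $\intX (\Lc+W)^k \mathds{1}\, d\nuw = \lambda^k$ — is the cleanest part of the argument and is precisely what makes the Taylor series of $\e^{\Dt\lambda}$ appear naturally in the expansion.
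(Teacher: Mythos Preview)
There is a genuine gap in your order counting. Theorem~\ref{theo:numana} gives, for any test function $\varphi$,
\[
\intX \varphi \, d\nuwdt = \intX \varphi \, d\nuw + \Dt^p \intX \varphi f \, d\nuw + \Dt^{p+1} R_{W,\Dt}\varphi,
\]
so applying it with $\varphi = \Qdt^W\mathds{1}$ (which is uniformly bounded together with its derivatives) yields a remainder that is only $\O(\Dt^{p+1})$, not the $\O(\Dt^{p+2})$ you wrote. Since the term you want to identify, $\Dt^{p+1}\lambda_{p+1}$, is of the \emph{same} order as this remainder, it is absorbed by it. After taking $\frac{1}{\Dt}\log$, you therefore only obtain $\lambdadt = \lambda + \O(\Dt^p)$, without the explicit leading coefficient $\lambda_{p+1}-\lambda^{p+1}/(p+1)!$.

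The paper avoids this by invoking Lemma~\ref{lem:step3} directly rather than Theorem~\ref{theo:numana}. That lemma gives the sharper estimate
\[
\e^{\Dt\lambdadt} = \e^{\Dt\lambdatdt} + \Dt^{p+2}\tilde r_{W,\Dt},
\qquad
\e^{\Dt\lambdatdt} = \intX \Qdt^W\mathds{1}\,(1+\Dt^p f)\,d\nuw,
\]
which is one order better than what Theorem~\ref{theo:numana} delivers for a generic observable. This extra order comes from the construction of an approximate eigenvector $\hwdt = \hh + \Dt\,u_1 + \cdots + \Dt^p u_p$ for $\Qdt^W$; it is a statement specific to the eigenvalue and is not recoverable by plugging a test function into~\eqref{eq:errordt}. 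Once the $\Dt^{p+2}$ remainder is in hand, the rest of your computation (the identity $\intX \At_k\mathds{1}\,d\nuw = \lambda^k/k!$, the cancellation $\intX f\,d\nuw=0$, recognising the truncated Taylor series of $\e^{\Dt\lambda}$, and expanding the logarithm) is exactly what the paper does.
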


This result is important since it implies that we can approximate the eigenvalue~$\lambda$ by 
computing $\lambdadt$, which is proportional to the logarithm of the average creation of 
probability over a timestep $\Dt$ (given by $\Qdt^W \mathds{1}$) at stationarity. This is the 
reason why we need the coefficients $a_k$ to be correct up to order $p$ (\textit{i.e.} 
$a_k=\lambda^k/k!$) since they represent the creation of probability of the discretized process. 
The estimate~\eqref{eq:loglambda} justifies the use of population based 
dynamics~\cite{giardina2006direct,tailleur2008simulation,nemoto2016population} when the 
underlying continuous diffusions are discretized in time. We illustrate the error 
estimate~\eqref{eq:loglambda} in the numerical simulations reported in 
Section~\ref{sec:application}. 

\begin{proof}
We use Lemma~\ref{lem:step3} to prove the theorem, which highlights the importance of this result in our context. In all this proof, $r_{W,\Dt}$ denotes a smooth function which may change from line to line, but whose $C^0$ norm is always uniformly bounded for sufficiently small timesteps~$\Dt$. From the definition~\eqref{eq:lambdadt} and the estimate~\eqref{eq:approxeigen}, 
\[ 
\lambdadt = \frac{1}{\Dt}\log\left( \e^{\Dt\lambdadt} \right)
 = \frac{1}{\Dt} \log\left( \e^{\Dt\lambdatdt}+ \Dt^{p+2}r_{W,\Dt} \right).
\]
Expanding $\e^{\Dt\lambdatdt}$ defined in~\eqref{eq:tildelambdadt} in powers of~$\Dt$ and recalling that $\At_1\mathds{1}=W$, 
\[
\begin{aligned}
\lambdadt &  = \frac{1}{\Dt} \log \left[ \intX \left( 1 + \Dt W + \Dt^2 \At_2\mathds{1}
+\hdots + \Dt^{p+1} \At_{p+1}\mathds{1} \right) ( 1 + \Dt^p f)\, d\nuw 
+ \Dt^{p+2}r_{W,\Dt} \right]
\\& = \frac{1}{\Dt} \log \left[ 1 +\Dt \lambda + \hdots + \Dt^p \frac{\lambda^p}{p!} + \Dt^{p+1}\intX\left( \At_{p+1}\mathds{1} + Wf\right) d\nuw + \Dt^{p+2}r_{W,\Dt}\right],
\end{aligned}
\]
where we used that $\intX f\, d\nuw =0$ and, in view of~\eqref{eq:eigenrelation},
\begin{equation}
\label{eq:a_k_lambda^k}
\forall\, k\in\{1,\hdots,p\}, 
\qquad 
\intX \At_k \mathds{1} \, d\nuw = \intX \left[\frac{(\Lc + W)^k}{k!} \mathds{1}\right] \hw \, d\nu = \intX \left[\frac{(\Lc^* + W)^k}{k!} \hw\right] d\nu = \frac{\lambda^k}{k!}.
\end{equation}
Therefore, 
\[
\begin{aligned}
\lambdadt &  = \frac{1}{\Dt} \log \left[ \e^{\Dt \lambda} - \sum_{k=p+2}^{+\infty} \Dt^k\frac{\lambda^k}{k!} + \Dt^{p+1}\intX\left( \At_{p+1}\mathds{1}-\frac{\lambda^{p+1}}{(p+1)!} + Wf\right)d\nuw + \Dt^{p+2}r_{W,\Dt}
\right]
\\&  = \frac{1}{\Dt} \log \left[ \e^{\Dt \lambda}+ \Dt^{p+1}\intX\left( \At_{p+1}\mathds{1}-\frac{\lambda^{p+1}}{(p+1)!} 
+ Wf\right)d\nuw + \Dt^{p+2} r_{W,\Dt} \right].
\end{aligned}
\]
Given that $\e^{\Dt\lambda}$ is uniformly bounded for $0<\Dt\leq \Dt^*$ and equal to $1$ at leading order in~$\Dt$, we obtain, by expanding the logarithm,
\[
\lambdadt = \lambda + \Dt^{p}\,\e^{-\Dt \lambda} \left[\intX\left( \At_{p+1}\mathds{1} + Wf\right)d\nuw -\frac{\lambda^{p+1}}{(p+1)!}\right]+ \Dt^{p+1}r_{W,\Dt}.
\]
The result then follows from $\e^{-\Dt \lambda}= 1 + \Dt \, r_{\lambda,\Dt}$ and the definition~\eqref{eq:lambdap1} of~$\lambda_{p+1}$. 
\end{proof}

%---------------- TU lemma --------------
\subsection{TU Lemma}
\label{sec:TU}

In the context of splitting schemes, it may be useful to relate the invariant probability measures of two 
numerical schemes differing by the ordering of the applied operators. This is the purpose 
of a result called ``TU lemma'' in~\cite{leimkuhler2016computation}, which we adapt to our 
context in Lemma~\ref{lem:TUW}. We then state a similar version of this lemma for the
eigenvalues of two such schemes in Proposition~\ref{prop:directTU}.
We will see in Section~\ref{sec:secondorder} that this last result can be combined with 
Theorem~\ref{theo:average} to show that the schemes~\eqref{eq:QW1} and~\eqref{eq:QW2} both 
provide second order estimates of the principal eigenvalue $\lambda$ 
using~\eqref{eq:loglambda}, when the discretization of the process $\Qdt$ is weakly consistent 
of order~2. 

\begin{lemma}
  \label{lem:TUW}
  Consider two numerical schemes for the Feynman--Kac dynamics with associated evolution 
operators $\Qdt^W$ and $\Qtdt^W$ satisfying Assumption~\ref{as:minorization}, and denote 
by $\nuwdt$ and $\nuwtdt$ respectively the associated ergodic measures in the sense of 
Theorem~\ref{theo:feynmankacdiscr}. Assume that the evolution operators are related by two 
operators $\Tdt^W$ and $\Udt^W$, bounded on $\Linfty$, as:
  \begin{equation}
    \label{eq:TUrelationW}
    \forall\,n\geq 1, \qquad \left(\Qtdt^W\right)^{n}= \Tdt^W \left(\Qdt^W\right)^{n} \Udt^W.
  \end{equation}
  Then, for any $\varphi\in\S$,
  \begin{equation}
    \label{eq:TUcvW}
    \intX \varphi\, d\nuwtdt = \frac{\dps\intX \Udt^W\varphi\, d\nuwdt}
          {\dps \intX \Udt^W\mathds{1}\,d\nuwdt}.
  \end{equation}
\end{lemma}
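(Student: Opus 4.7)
The plan is to evaluate $\intX \varphi\, d\nuwtdt$ as a long-time limit for the $\Qtdt^W$ dynamics, insert the factorization~\eqref{eq:TUrelationW}, and reduce to a ratio of $\nuwdt$-integrals via a quantitative ergodic bound for $\Qdt^W$. Pick any initial probability measure $\mu \in \PX$. Theorem~\ref{theo:feynmankacdiscr} applied to $\Qtdt^W$ gives
\begin{equation*}
\intX \varphi\, d\nuwtdt
= \lim_{n \to \infty} \frac{\mu\bigl((\Qtdt^W)^n \varphi\bigr)}{\mu\bigl((\Qtdt^W)^n \mathds{1}\bigr)}
= \lim_{n \to \infty} \frac{\mu\bigl(\Tdt^W (\Qdt^W)^n (\Udt^W \varphi)\bigr)}{\mu\bigl(\Tdt^W (\Qdt^W)^n (\Udt^W \mathds{1})\bigr)},
\end{equation*}
so the problem reduces to computing this last limit.

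The key auxiliary claim is that, for any $\chi \in \Linfty$,
\begin{equation*}
\lim_{n \to \infty} \frac{\mu\bigl(\Tdt^W(\Qdt^W)^n \chi\bigr)}{\mu\bigl(\Tdt^W(\Qdt^W)^n \mathds{1}\bigr)} = \intX \chi\, d\nuwdt.
\end{equation*}
Applied with $\chi = \Udt^W\varphi$ in the numerator and $\chi = \Udt^W \mathds{1}$ in the denominator (both bounded since $\Udt^W$ acts on $\Linfty$), dividing numerator and denominator by the common factor $\mu(\Tdt^W(\Qdt^W)^n\mathds{1})$ produces exactly~\eqref{eq:TUcvW}. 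To prove the claim, specialize the TV bound~\eqref{eq:TVconvergence} to Dirac initial conditions to obtain, uniformly in $q \in \X$,
\begin{equation*}
(\Qdt^W)^n \chi(q) = (\Qdt^W)^n \mathds{1}(q) \left( \intX \chi\, d\nuwdt + \rho_n(\chi,q) \right),
\qquad \|\rho_n(\chi,\cdot)\|_\infty \leq 2\|\chi\|_\infty (1-\varepsilon^2)^n.
\end{equation*}
Applying the bounded operator $\Tdt^W$ and integrating against $\mu$ decomposes the numerator as $\intX \chi\, d\nuwdt \cdot A_n + E_n(\chi)$ with $A_n := \mu(\Tdt^W(\Qdt^W)^n \mathds{1})$ and $|E_n(\chi)| \leq 2\|\Tdt^W\|_{\mathrm{op}} \|\chi\|_\infty (1-\varepsilon^2)^n \sup_q (\Qdt^W)^n\mathds{1}(q)$. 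The auxiliary claim then amounts to showing $E_n(\chi)/A_n \to 0$.

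The main obstacle lies in this last decay, i.e.\ in controlling $A_n$ from below. Iterating the minorization Assumption~\ref{as:minorization} yields $\sup_q (\Qdt^W)^n \mathds{1}(q) \leq \varepsilon^{-2} \inf_q (\Qdt^W)^n \mathds{1}(q)$ for $n \geq 1$. Whenever $\Tdt^W$ preserves nonnegative functions and $\mu(\Tdt^W \mathds{1}) > 0$ — which is automatic for the splitting-type constructions used in Section~\ref{sec:secondorder}, where $\Tdt^W$ is a product of partial propagators and exponential weight factors — one deduces $A_n \geq \inf_q(\Qdt^W)^n \mathds{1}(q)\cdot \mu(\Tdt^W \mathds{1})$, so that $E_n(\chi)/A_n = O((1-\varepsilon^2)^n) \to 0$ geometrically, closing the argument. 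In a fully general setting without positivity, the same conclusion follows from the spectral asymptotics $(\Qdt^W)^n \mathds{1}/\e^{n\Dt\lambdadt} \to C\,\hwdt$ as $n\to\infty$ (a consequence of Lemma~\ref{lem:step3}) upon choosing $\mu$ so that $\mu(\Tdt^W \hwdt) \neq 0$; this refinement is unnecessary for the applications considered in the paper, and passing to the limit in the quotient then yields~\eqref{eq:TUcvW}.
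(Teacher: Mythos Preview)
Your argument is correct under the positivity hypothesis on $\Tdt^W$ that you make explicit, but the paper reaches the conclusion much more directly. The simplification you miss is that $\Tdt^W$ can be absorbed into the initial measure: setting $\mu_1(\phi) = \mu(\Tdt^W \phi)/\mu(\Tdt^W \mathds{1})$, one rewrites
\[
\frac{\mu\bigl(\Tdt^W (\Qdt^W)^n \Udt^W \varphi\bigr)}{\mu\bigl(\Tdt^W (\Qdt^W)^n \Udt^W \mathds{1}\bigr)}
= \frac{\Phi_{\Dt,n}^W(\mu_1)(\Udt^W\varphi)}{\Phi_{\Dt,n}^W(\mu_1)(\Udt^W\mathds{1})},
\]
and a single application of Theorem~\ref{theo:feynmankacdiscr} with initial condition $\mu_1$ gives the limit. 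Your pointwise TV estimate, the $\sup/\inf$ comparison drawn from Assumption~\ref{as:minorization}, and the control of $E_n/A_n$ are all subsumed by this change of initial measure. Both arguments in fact rely on $\Tdt^W$ preserving positivity: you need it for the lower bound on $A_n$, while the paper needs it so that $\mu_1$ is a genuine probability measure; neither fully covers the general bounded-operator hypothesis stated in the lemma. Finally, your closing appeal to Lemma~\ref{lem:step3} is imprecise: that lemma only produces an approximate eigenvector satisfying $\Qdt^W \hwdt = \e^{\Dt \lambdatdt} \hwdt + O(\Dt^{p+2})$, which does not by itself yield the pointwise convergence $(\Qdt^W)^n \mathds{1}/\e^{n\Dt\lambdadt} \to C\,\hwdt$ that you invoke.
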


For the TU lemma stated in~\cite{leimkuhler2016computation}, the typical case of application 
corresponds to $\Qdt= \Udt \Tdt$ and $\Qtdt =  \Tdt \Udt$, with two Markov operators $\Tdt$ 
and $\Udt$. In this case, the relation~\eqref{eq:TUrelationW} holds with a power $n-1$ on 
the right-hand side. For Feynman--Kac semigroups, $\Tdt^W$ and $\Udt^W$ are a priori such 
that $\Tdt^W\mathds{1}\neq \mathds{1}$ and $\Udt^W\mathds{1}\neq \mathds{1}$. A typical case 
of interest is $\Qdt^W=\e^{\Dt W}\Qdt$ and $\Qtdt^W=\Qdt\left( \e^{\Dt W} \cdot \right)$, in 
which case~\eqref{eq:TUrelationW} is satisfied with $\Tdt^W = \e^{-\Dt W}$ and 
$\Udt^W = \e^{\Dt W}$. 

\begin{proof}
For any $\mu\in\PX$ and any $\varphi\in\S$,
\[
\begin{aligned}
\widetilde{\Phi}_{\Dt,n}(\mu)(\varphi)& = \frac{ \mu \left( \left(\Qtdt^W\right)^{n}\varphi\right)}{ \mu \left( \left(\Qtdt^W\right)^{n}\mathds{1}\right)} = \frac{ \mu \left(\Tdt^W \left(\Qdt^W\right)^{n}\Udt^W\varphi\right)}{ \mu \left( \Tdt^W \left(\Qdt^W\right)^{n}\Udt^W\mathds{1} \right)}
\\ & =  \frac{ \left(\mu\Tdt^W\right)  \left( \left(\Qdt^W\right)^{n}\mathds{1}\right)}{\left(\mu\Tdt^W\right) \left(\left(\Qdt^W\right)^{n}\Udt^W\mathds{1} \right)}
\times 
\frac{\left(\mu\Tdt^W\right) \left(\left(\Qdt^W\right)^{n}\Udt^W\varphi\right)} {\left(\mu\Tdt^W\right) \left(\left(\Qdt^W\right)^{n}\mathds{1} \right)}
= \frac{ \Phi_{\Dt,n}(\mu_1)(\Udt^W\varphi)}{\Phi_{\Dt,n}(\mu_1)(\Udt^W\mathds{1})},
\end{aligned}
\]
where $\mu_1 \in \PX$ is defined by 
\[
\forall \phi \in \S, \qquad \mu_1\left(\phi\right) = \frac{\dps \mu\left(\Tdt^W\phi\right)}{\dps \mu\left(\Tdt^W\mathds{1}\right)}.
\]
The result then follows from the ergodic limits
\[
\underset{n\to+\infty}{\lim} \Phi_{\Dt,n}(\mu_1)(\varphi) = \intX \varphi\, d\nuwdt,
\qquad
\underset{n\to+\infty}{\lim} \widetilde{\Phi}_{\Dt,n}(\mu)(\varphi) = \intX \varphi\, d\nuwtdt,
\]
as provided by Theorem~\ref{theo:feynmankacdiscr}. 
\end{proof}

In our framework, the approximate
principal eigenvalue $\lambdadt$ is another important feature of a discretization scheme. In fact, 
under an additional assumption on the operators $\Tdt^W$ and $\Udt^W$, schemes related 
by~\eqref{eq:TUrelationW} share the same approximate eigenvalues in the sense 
of~\eqref{eq:lambdadt}. This is made precise in the following proposition (see 
Section~\ref{sec:proof_directTU} for the proof).

\begin{prop}
\label{prop:directTU}
Fix a timestep $\Delta t > 0$ and 
consider a numerical scheme for the Feynman--Kac dynamics corresponding to an evolution 
operator $\Qdt^W$ satisfying Assumption~\ref{as:minorization}, with associated invariant 
measure~$\nuwdt$ given by Theorem~\ref{theo:feynmankacdiscr}, and eigenvalue $\lambdadt$
defined by~\eqref{eq:lambdadt}.
Consider next a second scheme  corresponding to an operator $\Qtdt^W$ related to~$\Qdt^W$ 
by~\eqref{eq:TUrelationW},
with operators $\Udt^W$ and $\Tdt^W$ bounded on $\Linfty$ and for which there 
exists $\alpha >0$ such that, for any $\varphi \in\S$ with $\varphi \geq 0$, 
\begin{equation}
\label{eq:minoTUW}
\alpha \varphi \leq \Udt^W \varphi \leq \alpha^{-1}\varphi, \quad 
\alpha \varphi \leq \Tdt^W \varphi \leq \alpha^{-1}\varphi.
\end{equation}
Then, $\Qtdt^W$ satisfies Assumption~\ref{as:minorization}, and its invariant probability measure is denoted
by $\nuwtdt$. Moreover, its associated eigenvalue 
$\lambdatdt$ defined by
\begin{equation}
\label{eq:lambdatdt}
\lambdatdt = \frac{1}{\Dt}\log \left[ \intX \Qtdt^W \mathds{1}\, d\nuwtdt \right],
\end{equation}
is such that
\[
\lambdatdt=\lambdadt.
\]
\end{prop}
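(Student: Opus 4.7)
The plan is to reduce $\lambdatdt = \lambdadt$ to the observation that both eigenvalues coincide with the exponential growth rate of $n \mapsto \mu((\Qdt^W)^n \mathds{1})$ for an arbitrary initial $\mu \in \PX$, and to show via~\eqref{eq:TUrelationW} and~\eqref{eq:minoTUW} that inserting $\Tdt^W$ and $\Udt^W$ cannot alter this rate.

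First I verify that Assumption~\ref{as:minorization} is inherited by $\Qtdt^W$, so that Theorem~\ref{theo:feynmankacdiscr} provides $\nuwtdt$ and $\lambdatdt$ is well-defined. The bounds~\eqref{eq:minoTUW} ensure that $\Tdt^W$ and $\Udt^W$ preserve positivity and are comparable to the identity by factors $\alpha^{\pm 1}$. Combining these with the positivity of the kernel of $\Qdt^W$ and the minorization~\eqref{eq:minorization} yields, for any non-negative bounded $\varphi$,
\[
\alpha^2 \varepsilon\, \eta(\varphi) \leq \Qtdt^W \varphi = \Tdt^W \Qdt^W \Udt^W \varphi \leq \alpha^{-2}\varepsilon^{-1}\, \eta(\varphi),
\]
which gives Assumption~\ref{as:minorization} for $\Qtdt^W$ (up to shrinking the constant to lie in $(0,1)$).

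Second I establish a dynamical characterization of both eigenvalues. For any $\mu \in \PX$, the telescoping used in the proof of Lemma~\ref{lem:measuredynamics} combined with the definition~\eqref{eq:generaldiscr} yields
\[
\frac{\mu((\Qdt^W)^{n+1} \mathds{1})}{\mu((\Qdt^W)^n \mathds{1})} = \mu_n(\Qdt^W \mathds{1}), \qquad \mu_n = \Phi_{\Dt,n}^W(\mu).
\]
By the total variation convergence~\eqref{eq:TVconvergence} and the boundedness of $\Qdt^W \mathds{1}$, this ratio converges to $\nuwdt(\Qdt^W \mathds{1}) = \e^{\Dt \lambdadt}$, and a Cesaro argument applied to the increments of $\log \mu((\Qdt^W)^n \mathds{1})$ gives
\[
\frac{1}{n\Dt} \log \mu\!\left((\Qdt^W)^n \mathds{1}\right) \xrightarrow[n\to\infty]{} \lambdadt,
\]
along with the analogous statement for $\Qtdt^W$ and $\lambdatdt$.

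Third I invoke~\eqref{eq:TUrelationW} to write $\mu((\Qtdt^W)^n \mathds{1}) = \mu\!\left(\Tdt^W (\Qdt^W)^n \Udt^W \mathds{1}\right)$. Starting from $\alpha \mathds{1} \leq \Udt^W \mathds{1} \leq \alpha^{-1}\mathds{1}$, propagating through the positivity-preserving $(\Qdt^W)^n$, and then applying~\eqref{eq:minoTUW} to $\Tdt^W$, I obtain the pointwise sandwich
\[
\alpha^2 (\Qdt^W)^n \mathds{1} \leq \Tdt^W (\Qdt^W)^n \Udt^W \mathds{1} \leq \alpha^{-2} (\Qdt^W)^n \mathds{1}.
\]
Integrating against $\mu$, taking logarithms and dividing by $n\Dt$ sends the boundary terms $\pm 2(\log \alpha)/(n\Dt)$ to zero, while the outer expressions tend to $\lambdadt$ and the inner one to $\lambdatdt$, forcing $\lambdatdt = \lambdadt$. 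The only delicate point is the dynamical characterization of $\lambdadt$ as the growth rate of $n \mapsto \mu((\Qdt^W)^n \mathds{1})$; once that is granted, the mechanism is simply that $\Tdt^W$ and $\Udt^W$ contribute only the multiplicative factors $\alpha^{\pm 2}$, hence a sub-exponential perturbation invisible to the logarithmic rate.
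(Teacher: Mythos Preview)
Your proof is correct and follows the same underlying idea as the paper's argument (made explicit in the remark following the proposition): the principal eigenvalue is the exponential growth rate of $n\mapsto \mu((\Qdt^W)^n\mathds{1})$, and bounded multiplicative perturbations by $\Tdt^W$, $\Udt^W$ cannot change this rate. The execution differs, however. The paper integrates against the invariant measures $\nuwdt$ and $\nuwtdt$, for which the growth is \emph{exactly} exponential, i.e.\ $\nuwdt((\Qdt^W)^n\varphi)=\e^{n\Dt\lambdadt}\nuwdt(\varphi)$; it then needs the auxiliary measure bounds $\varepsilon'\eta \leq \nuwdt,\nuwtdt \leq (\varepsilon')^{-1}\eta$ to compare integrals against the two different invariant measures, and concludes by showing $\e^{n\Dt(\lambdadt-\lambdatdt)}$ is uniformly bounded in~$n$. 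You instead establish the growth rate for an \emph{arbitrary} initial $\mu$ via the ratio identity $\mu((\Qdt^W)^{n+1}\mathds{1})/\mu((\Qdt^W)^n\mathds{1})=\mu_n(\Qdt^W\mathds{1})$, the total variation convergence of Theorem~\ref{theo:feynmankacdiscr}, and a Ces\`aro average. This is slightly more elementary in that it bypasses the comparison of $\nuwdt$ and $\nuwtdt$ through~$\eta$, at the cost of invoking the full convergence~\eqref{eq:TVconvergence} rather than only the stationarity relation~\eqref{eq:stationarity}.
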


The eigenvalue $\lambdatdt$ should not be mistaken in this context with the 
definition~\eqref{eq:tildelambdadt}, which serves as an intermediate in the proof of
Theorem~\ref{theo:numana}. A careful inspection of the proof shows that it would be possible
to consider a slightly different assumption~\eqref{eq:minoTUW}. 

\begin{remark}
Although Proposition~\ref{prop:directTU} may look odd at first sight, it has a natural
interpretation in terms of matrices. Indeed, if $A\in\R^{n\times n}$ and
$B \in \R^{n\times n}$ are two square matrices with nonnegative entries, the products
$AB$ and $BA$ share the same real principal eigenvalue.
One can show this by the following argument. For any matrix $M\in\R^{n\times n}$ with nonnegative
entries, the spectral radius  
\[
\rho(M)=\underset{n\to +\infty}{\lim} \| M^n \|^{\frac{1}{n}}
\]
is an eigenvalue of $M$ (see~\cite{schaefer1974banach}). % Section I.2, see ``2.3 Proposition''
This eigenvalue is the equivalent of the principal eigenvalue for the operator~$\Lc+W$ since
it is the eigenvalue of the matrix~$M$ with the largest real part.   
It is easy to see that $\rho(AB)=\rho(BA)$ by noting that
\[
\rho(AB) =\underset{n\to +\infty}{\lim} \| (AB)^n \|^{\frac{1}{n}}
=\underset{n\to +\infty}{\lim} \| A (BA)^{n-1} B \|^{\frac{1}{n}}
\leq \underset{n\to +\infty}{\lim}\| A\|^{\frac{1}{n}} 
\| (BA)^{n-1} \|^{\frac{1}{n}} \| B\|^{\frac{1}{n}} = \rho(BA). 
\]
This leads to $\rho(AB)\leq \rho(BA)$, and, by symmetry, $\rho(BA)\leq \rho(AB)$;
hence $\rho(AB)= \rho(BA)$. In the same way, evolution operators related 
by~\eqref{eq:TUrelationW} share the same principal eigenvalue even though, a priori, they do 
not admit the same invariant probability measures. The proof of Proposition~\ref{prop:directTU}, 
presented in Section~\ref{sec:proof_directTU}, follows a path similar to the one used here for matrices.
\end{remark}

%-----------------------------------------------------------------------
%--------------- Second order 
\subsection{Second order schemes}
\label{sec:secondorder}

We now turn to second order schemes for Feynman--Kac dynamics. They are the most interesting ones in practice, since they can provide an important improvement in the accuracy for a relatively cheap computational overhead. Moreover, in our case, they can be  straightforwardly built from second order schemes for the dynamics~\eqref{eq:overdamped}, as a consequence of Theorem~\ref{theo:numana}.

\begin{lemma}
\label{prop:secondorder}
Suppose that~\eqref{eq:dlsemigroup} and~\eqref{eq:uniformbound} hold with the following expansion for $\Qdt$:
\begin{equation}
\label{eq:secondorder}
\forall\,\varphi\in\S, 
\qquad 
\Qdt\varphi = \varphi + \Dt \Lc\varphi + \Dt^2 \frac{\Lc^2\varphi}{2} + \Dt^3\A_3\varphi + \Dt^4 \Rm_{\Dt}\varphi, 
\end{equation}
where $\A_3$ is a differential operator with smooth coefficients and $\Rm_{\Dt}$ satisfies~\eqref{eq:uniformbound}. Then the operator $\Qdt^W$ defined by
\[
\label{eq:midpoint}
\forall \,\varphi\in\S, \qquad \Qdt^W\varphi= \e^{\frac{\Dt}{2}W} \Qdt\left(\e^{\frac{\Dt}{2}W}\varphi\right), 
\]
satisfies Assumption~\ref{as:dlsemigroupW} with $p=2$:
\begin{equation}
\label{eq:secondorderdl}
\forall \,\varphi\in\S, \qquad \Qdt^W\varphi = \varphi + \Dt (\Lc + W) \varphi +\Dt^2\frac{(\Lc +W)^2\varphi}{2} + \Dt^3\At_3\varphi + \Dt^4 \Rm_{W,\Dt}\varphi,
\end{equation}
where
\[
\At_3\varphi = \A_3\varphi + \frac{W^3\varphi}{6} +\frac{\Lc(W^2\varphi)}{8} + \frac{\Lc^2(W\varphi)}{4} + \frac{W\Lc^2\varphi}{4} + \frac{W\Lc(W\varphi)}{4} + \frac{W^2 \Lc\varphi}{8},
\]
and $\Rm_{W,\Dt}$ satisfies~\eqref{eq:uniformbound}. 
\end{lemma}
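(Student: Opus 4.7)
The plan is to expand both factors in $\Qdt^W\varphi = \e^{\Dt W/2}\,\Qdt\bigl(\e^{\Dt W/2}\varphi\bigr)$ in powers of~$\Dt$ and collect terms up to order~$3$, with a uniformly bounded remainder of order~$4$. Since the multiplication operators $W^k$ preserve~$\S$ and the operators $\A_3$ and $\Rm_{\Dt}$ satisfy the bounds~\eqref{eq:uniformbound}, it will follow that the resulting remainder $\Rm_{W,\Dt}$ also satisfies~\eqref{eq:uniformbound}; this last verification is routine and I would dispatch it briefly at the end.

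Concretely, I would first write
\[
\e^{\frac{\Dt}{2}W}\varphi = \varphi + \frac{\Dt}{2}W\varphi + \frac{\Dt^2}{8}W^2\varphi + \frac{\Dt^3}{48}W^3\varphi + \Dt^4\, r_{W,\Dt}\varphi,
\]
apply~\eqref{eq:secondorder} to this expression to get an expansion of $\Qdt(\e^{\Dt W/2}\varphi)$ up to order~$3$, and finally multiply on the left by the same expansion of $\e^{\Dt W/2}$. At order~$0$ one recovers $\varphi$. At order~$1$ the contributions $\frac{W\varphi}{2} + \Lc\varphi + \frac{W\varphi}{2}$ sum to $(\Lc+W)\varphi$, giving $\At_1 = \Lc+W$ and $\A_1 = \Lc$ as required by~\eqref{eq:consistentintegration}. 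At order~$2$ one gets
\[
\frac{W^2\varphi}{8} + \frac{\Lc(W\varphi)}{2} + \frac{\Lc^2\varphi}{2} + \frac{W^2\varphi}{4} + \frac{W\Lc\varphi}{2} + \frac{W^2\varphi}{8} = \frac{(\Lc+W)^2\varphi}{2},
\]
which is exactly $\At_2$ predicted for a scheme of weak order~$2$.

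The main computation — and the only place where cancellations must be tracked carefully — is the order~$3$ term, which collects four contributions: the order~$3$ part of $\Qdt(\e^{\Dt W/2}\varphi)$ (yielding $\A_3\varphi + \frac{W^3\varphi}{48} + \frac{\Lc(W^2\varphi)}{8} + \frac{\Lc^2(W\varphi)}{4}$), the product of $\frac{\Dt W}{2}$ with the order~$2$ term of $\Qdt(\e^{\Dt W/2}\varphi)$ (giving $\frac{W^3\varphi}{16} + \frac{W\Lc(W\varphi)}{4} + \frac{W\Lc^2\varphi}{4}$), the product of $\frac{\Dt^2 W^2}{8}$ with the order~$1$ term $\Lc\varphi + \frac{W\varphi}{2}$ (giving $\frac{W^2\Lc\varphi}{8} + \frac{W^3\varphi}{16}$), and finally $\frac{\Dt^3 W^3}{48}\varphi$ multiplied by~$\varphi$. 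Summing the $W^3\varphi$ coefficients yields $\tfrac{1}{48}+\tfrac{1}{16}+\tfrac{1}{16}+\tfrac{1}{48}=\tfrac{1}{6}$, while the remaining mixed terms recombine exactly into the announced formula for $\At_3$. The main obstacle is thus just this bookkeeping step; once it is done, writing the leftover contributions as a single operator $\Rm_{W,\Dt}$ and bounding it via~\eqref{eq:uniformbound} together with the smoothness of~$W$ (so that $\|W^k\varphi\|_{\C^m} \leq C\|\varphi\|_{\C^m}$) concludes the proof.
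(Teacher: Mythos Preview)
Your proof is correct; the order-by-order expansion you carry out is exactly the ``direct computation'' that the paper mentions as one of two options before instead sketching the Baker--Campbell--Hausdorff route. The paper writes $\Qdt = \e^{\Dt\Lc} + \Dt^3(\A_3 - \Lc^3/6) + \dots$, applies BCH to $\e^{\Dt W/2}\e^{\Dt\Lc}\e^{\Dt W/2}$ to identify the third-order commutator correction, and then expands the resulting exponential. Your approach is more elementary (no BCH identity needed) and more self-contained, at the price of slightly heavier bookkeeping; the BCH approach makes the structure of the symmetric splitting more transparent and scales better if one wanted higher-order corrections. One cosmetic slip: in your last contribution you write ``$\frac{\Dt^3 W^3}{48}\varphi$ multiplied by~$\varphi$'', where you mean $\frac{W^3}{48}$ multiplied by the order-$0$ term~$\varphi$.
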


The interpretation of this result is the following: when we have a scheme consistent at order 
2 for the dynamics with $W=0$, we immediately obtain a second order scheme for the Feynman--Kac 
dynamics by using the corresponding Markov chain and a trapezoidal rule for the time integral 
in the exponential. Thanks to the consistency at order one ($\At_1=\Lc+W$) and 
Propositions~\ref{as:eigen} and~\ref{prop:inversibility}, the assumptions of 
Theorems~\ref{theo:numana} and~\ref{theo:average}  are immediately satisfied with $p=2$.

\begin{proof}
The expression of~$\At_3$ can be obtained by a direct computation or with the Baker-Campbell-Hausdorff formula (see~\cite{hairer2006geometric}), which is a convenient way to perform the algebra allowing to make precise the various terms in expansions in powers of~$\Dt$. Let us sketch how this is done, and refer to~\cite{leimkuhler2016computation} for strategies of proof in order to make the expansions below rigorous. First, 
\[
\Qdt = \e^{\Dt \Lc} + \Dt^3\left(\A_3-\frac{\Lc^3}{6}\right) + ...
\]
and, by the Baker-Campbell-Hausdorff formula,
\[
\e^{\Dt W/2} \e^{\Dt \Lc} \e^{\Dt W/2} = \e^{S_{\Dt}}, 
\qquad 
S_{\Dt} = \Dt (\Lc+W) + \frac{\Dt^3}{12} \left( -\frac12 \big[W,[W,\Lc]\big] + \big[\Lc,[\Lc,W]\big]\right),
\]
where $[A,B] = AB-BA$ denotes the commutator of two operators $A$ and $B$. Therefore, 
\[
\e^{\Dt W/2} \e^{\Dt \Lc} \e^{\Dt W/2} = \Id + \Dt (\Lc+W) + \frac{\Dt^2}{2} (\Lc+W)^2 + \frac{\Dt^3}{6} (\Lc+W)^3 + \frac{\Dt^3}{12} \left( -\frac12 \big[W,[W,\Lc]\big] + \big[\Lc,[\Lc,W]\big]\right) + ...
\]
The conclusion then follows from
\[
\e^{\Dt W/2} \Qdt \e^{\Dt W/2} = \e^{\Dt W/2} \e^{\Dt \Lc} \e^{\Dt W/2} + \Dt^3\left(\A_3-\frac{\Lc^3}{6}\right) + ...
\]
upon developping the commutators. 
\end{proof}

When we are interested in the computation of the principal eigenvalue with 
Theorem~\ref{theo:average}, we can in fact show that the left-point 
integration~\eqref{eq:QW1} is sufficient for $\lambdadt$ to be correct at order 2 if $\Qdt$ 
is consistent at order 2 (\textit{i.e.}~\eqref{eq:secondorder} holds). In particular, the 
discretization scheme for the Feynman--Kac dynamics need not be consistent at order~2 for 
the eigenvalue to be correct at order~2 (in the same way that the invariant probability measure for 
discretizations of ergodic SDEs can be correct at order~2 even if the discretization itself 
is only weakly consistent at order~1, 
see~\cite{AVZ15,leimkuhler2016computation,lelievre2016partial}). This consequence of 
Proposition~\ref{prop:directTU} is made precise in the following proposition.

\begin{prop}
\label{prop:appliTU}
Consider an evolution operator $\Qdt$ with the following familly of discretizations for 
the Feynman--Kac dynamics:
\[
\Qdt^{W,\delta} = \e^{(1-\delta)W\Dt}\Qdt \e^{\delta W \Dt},\quad \delta \in [0,1].
\]
Suppose that Assumption~\ref{as:minorization} holds for at least one of these schemes, and 
denote by $\lambdadt^{\delta}$ their associated eigenvalues as in~\eqref{eq:lambdadt}. 
Then, $\lambdadt^{\delta}$ is independent of $\delta$. Moreover, when $\Qdt$ 
satisfies~\eqref{eq:secondorder}, the eigenvalue $\lambdadt^{\delta}$ satisfies~\eqref{eq:loglambda}
with $p=2$ for any $\delta\in[0,1]$.
\end{prop}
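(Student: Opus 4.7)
The plan is to apply Proposition~\ref{prop:directTU} in order to reduce both assertions to the symmetric case $\delta = 1/2$, where Lemma~\ref{prop:secondorder} and Theorem~\ref{theo:average} directly apply. The key observation is that the factors $\e^{\alpha W\Dt}$ are all multiplication operators by functions of the same variable and therefore commute with each other and combine via $\e^{(1-\delta)W\Dt}\e^{\delta W\Dt} = \e^{W\Dt}$.

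For the first claim (independence of $\delta$), I would fix $\delta \in [0,1]$ and expand
\[
(\Qdt^{W,\delta})^n
= \e^{(1-\delta)W\Dt}\Qdt\,\bigl(\e^{\delta W\Dt}\e^{(1-\delta)W\Dt}\Qdt\bigr)^{n-1}\e^{\delta W\Dt}
= \e^{(1-\delta)W\Dt}\Qdt\,(\e^{W\Dt}\Qdt)^{n-1}\e^{\delta W\Dt}.
\]
Writing $\e^{(1-\delta)W\Dt} = \e^{-\delta W\Dt}\e^{W\Dt}$ and recognizing $\e^{W\Dt}\Qdt = \Qdt^{W,0}$, this rearranges to
\[
(\Qdt^{W,\delta})^n = \e^{-\delta W\Dt}\,(\Qdt^{W,0})^n\,\e^{\delta W\Dt},
\]
which is precisely the TU relation~\eqref{eq:TUrelationW} with $\Tdt^W = \e^{-\delta W\Dt}$ and $\Udt^W = \e^{\delta W\Dt}$. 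Since $W$ is smooth on the compact set $\X$, setting $M = \|W\|_{C^0}$ yields the uniform bound $\e^{-M\Dt}\varphi \leq \e^{\pm\delta W\Dt}\varphi \leq \e^{M\Dt}\varphi$ for every nonnegative $\varphi$, so hypothesis~\eqref{eq:minoTUW} of Proposition~\ref{prop:directTU} is satisfied with $\alpha = \e^{-M\Dt}$. Proposition~\ref{prop:directTU} then guarantees (i) that Assumption~\ref{as:minorization} propagates from the single $\delta_0$ for which it is assumed to every $\delta \in [0,1]$, and (ii) that $\lambdadt^\delta = \lambdadt^{\delta_0}$ for all $\delta$. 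In particular $\lambdadt^\delta$ is independent of $\delta$.

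For the second claim, thanks to the first part it suffices to prove~\eqref{eq:loglambda} with $p=2$ for one specific value of $\delta$. The natural choice is $\delta = 1/2$, which gives $\Qdt^{W,1/2} = \e^{\Dt W/2}\Qdt \e^{\Dt W/2}$. This is exactly the operator analyzed in Lemma~\ref{prop:secondorder}: under the hypothesis~\eqref{eq:secondorder} on $\Qdt$, that lemma yields Assumption~\ref{as:dlsemigroupW} with $p=2$ and with the weakly consistent identification $\At_k = (\Lc+W)^k/k!$ for $k=1,2$. Combined with the minorization condition (valid for $\delta=1/2$ by the propagation in Step 1), Theorem~\ref{theo:average} applies and produces~\eqref{eq:loglambda} with $p=2$ for $\lambdadt^{1/2}$. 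Since $\lambdadt^\delta = \lambdadt^{1/2}$ by Step 1, the same estimate holds for every $\delta \in [0,1]$.

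The only real technical point is the algebraic manipulation verifying the TU relation, and this is essentially a one-line computation because all the reweighting factors are multiplication operators that commute. The minorization hypothesis~\eqref{eq:minoTUW} is free from the compactness of $\X$, and the rest is a quotation of Lemma~\ref{prop:secondorder} and Theorem~\ref{theo:average}. Thus no substantial obstacle arises; the content of the proposition is really that the $\delta$-dependence drops out at the level of the eigenvalue (even though the invariant measures $\nuwdt^\delta$ do depend on $\delta$, as described by formula~\eqref{eq:TUcvW}).
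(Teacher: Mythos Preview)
Your proof is correct and follows essentially the same route as the paper's. The only cosmetic difference is that the paper relates every $\Qdt^{W,\delta}$ directly to the trapezoidal scheme $\Qdt^{W,1/2}$ via $\Tdt^W=\e^{(1/2-\delta)\Dt W}$, $\Udt^W=\e^{(\delta-1/2)\Dt W}$, whereas you first relate each $\Qdt^{W,\delta}$ to $\Qdt^{W,0}$ and then invoke $\delta=1/2$ for the second-order estimate; both reductions are one-line computations with commuting multiplication operators and lead to the same application of Proposition~\ref{prop:directTU}, Lemma~\ref{prop:secondorder}, and Theorem~\ref{theo:average}.
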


\begin{proof}
The proof is a simple application of Proposition~\ref{prop:directTU}. Consider the scheme
$\Qdt^{W,\delta}$ for a fixed
$\delta\in[0,1]$ and the scheme $\Qtdt^W=\e^{\Dt\frac{W}{2}}\Qdt\e^{\Dt\frac{W}{2}}$, which
corresponds to a trapezoidal approximation of the integral. We can assume without loss of 
generality that $\Qtdt^W$ satisfies Assumption~\ref{as:minorization}.
Then, $\Qdt^{W,\delta}$ is related to $\Qtdt^W$ through~\eqref{eq:TUrelationW} for the 
corresponding operators: 
\[
\Udt^W=\e^{\left(\delta-\frac{1}{2}\right)\Dt W}, \quad \Tdt^W=\e^{\left(\frac{1}{2}
-\delta\right)\Dt W}.
\] 
The operators $\Udt^W$ and $\Tdt^W$ are bounded on $\Linfty$ and 
satisfy~\eqref{eq:minoTUW} with $\alpha=\e^{-\Dt\|W\|_{\Linfty}/2}$. Therefore, by 
Proposition~\ref{prop:directTU}, the eigenvalue $\lambdadt^{\delta}$ associated to $\Qdt^{W,\delta}$
is equal to $\lambdatdt$, the eigenvalue associated to $\Qtdt^W$, and thus does not
depend on $\delta$. Moreover, by Lemma~\ref{prop:secondorder}, if $\Qdt$
satisfies~\eqref{eq:secondorder}, $\Qtdt^W$ satisfies the assumptions of
Theorem~\ref{theo:average} with $p=2$. This shows that the eigenvalue $\lambdadt^{\delta}$
satisfies~\eqref{eq:loglambda} with $p=2$ whatever the integration rule (\textit{i.e.} for
any $\delta\in[0,1]$).
\end{proof}

\begin{remark}
  Proposition~\ref{prop:appliTU} shows that the eigenvalue~$\lambdadt$ can be correct at
  order two even though the scheme only has weak order one. One may wonder
  whether it is also possible to have second order convergence on the invariant measure
  when $\Qdt$ corresponds to a scheme of weak order one. As mentioned in
  Section~\ref{sec:expdiscr} this is the case when $W=0$, see the examples
  in~\cite{leimkuhler2016computation}. Perturbative arguments for
  small $W$ however show that this extra cancellation on the invariant measure cannot happen for
  a non-constant $W$, see~\cite{ferre2019PhD}.
\end{remark}

%-------------------------------- Numerical Applications --------------------------------------
\section{Numerical application}
\label{sec:application}

The goal of this section is to illustrate the error estimates presented in 
Section~\ref{sec:numana} on a toy example. For this, we 
consider~\eqref{eq:overdamped} over the one dimensional torus $\X=\mathbb{T}$ with possibly 
non-gradient drifts:
\begin{equation}
\label{eq:oneDoverdamped}
d q_t = (-  V'(q_t) + \gamma) \, dt + \sigma\, dB_t,
\end{equation}
where $V$ is a smooth potential and $\gamma\in\R$. Let us emphasize that a constant force
is not the gradient of a smooth periodic function. We first make precise in 
Section~\ref{sec:algo} the Monte Carlo algorithm used to compute the Feynman--Kac 
averages. We next describe in Section~\ref{sec:Galerkin} a Galerkin method to compute reference values for the
properties of interest. Note that such a discretization method can be used only for low-dimensional systems;
but, when it can be used, it typically provides more accurate results than stochastic methods.
Finally, we present our numerical results in Section~\ref{sec:results}.

\subsection{Monte Carlo discretization}
\label{sec:algo}

\paragraph{Discretization of the underlying SDE.}
The Euler-Maruyama discretization of the dynamics~\eqref{eq:oneDoverdamped} is given by: 
\begin{equation}
\label{eq:EM}
q^{n+1}=q^n + \big(- V'(q^n)+\gamma \big)\Dt + \sigma\sqrt{\Dt}\, G^n,
\end{equation} 
where $G^n$ are independent and identically distributed one-dimensional standard Gaussian
variables. It is well known that this scheme is weakly consistent of order
one (see for instance~\cite{MT04,debussche2012weak}).
In order to test our results on a second order scheme, we use a discretization
proposed \textit{e.g.}
in~\cite{abdulle2012high,zygalakis2011existence,FS17,trstanova2016mathematical}:
\begin{equation}
\label{eq:modified}
q^{n+1}= q^n - V' \left( q^n +\left(- V'(q^n)+\gamma\right) \frac{\Dt}{2}
+\frac{1}{2}\sigma\sqrt{\Dt}\, G^n 
\right) \Dt + \gamma \Dt 
- \frac{\sigma^2}{8} V'''(q^n) \Dt^2 + \sigma \sqrt{\Dt}\, G^n.
\end{equation}
It can be proved that this scheme is of weak order~$2$. 

\paragraph{Weighted dynamics.}
Once the underlying SDE has been discretized, a Monte Carlo
scheme for approximating the associated Feynman--Kac semigroup~\eqref{eq:generaldiscr} has to be devised.
Several methods have been succefully applied in order to compute Feynman--Kac averages, 
generally referred to as \emph{Sequential Monte Carlo} or \emph{Population Monte Carlo} 
methods~\cite{dou2001seq,del2004feynman,lelievre2010free}.
For simplicity and numerical efficiency, we present here a population method with multinomial 
resampling. More precisions on this familly of algorithms are available 
in~\cite{dou2001seq}, see also~\cite[Chapter~6]{lelievre2010free} in the context of
free energy computation and~\cite{hairer2014improved} in the context of Diffusion Monte Carlo.

The algorithm relies on a dynamics run over a set of replicas of the system. 
At each step, the replicas are updated according to the dynamics prescribed by the evolution 
operator $\Qdt$, and are assigned an importance weight depending on the choice of 
discretization rule for the integral. The replicas are then resampled following a multinomial 
distribution with their respective weights, before computing the desired averages.
This technique prevents the variance of the estimator to increase exponentially in time, 
a common problem when computing directly quantities such as~\eqref{eq:generaldiscr}. We now make 
precise the algorithm.

Consider a population of $M$ replicas $(q_m)_{m=1,\hdots,M}$ initially distributed according
to some probability measure $\mu^{\otimes_M}$ over $\X^M$ and evolving through a Markov
kernel $\Qdt$ with timestep $\Dt>0$. We denote by $\chidt:\X\times\X\to\R$ a weight function to be
chosen later on. The algorithm consists in repeating for each time $0\leq n < \Nit$
the following steps:
\begin{enumerate}[(1)]
\item For $m\in\{1,\hdots,M\}$, evolve the $m^{\mathrm{th}}$ replica as
$\tilde{q}_m^{n+1}\sim \Qdt(q_m^n,\cdot)$;
\item Compute the weight of each replica as $w_m^n=\e^{\chidt(q_m^n,\tilde{q}_m^{n+1})}$;
\item Compute the total creation of mass as
\[
P^n = \sum_{m=1}^M w_m^n,
\]
and the normalized probability vector $p^n\in\R^m$ with components $p_m^n= w_m^n /P^n$, for $m\in\{1,\hdots,M\}$;
\item Resample the replicas $(\tilde{q}_m^{n+1})_{m=1}^M$ according to the multinomial 
distribution associated with $p^n$, which defines a new set of replicas
$(q_m^{n+1})_{m=1}^M$;
\item Compute the estimator
\[
\widehat{\varphi}_n= \frac{1}{M} \sum_{m=1}^M \varphi(q_m^{n+1}).
\]
\end{enumerate}

Until now, we did not specify the choice of function $\chidt$, which depends on the
discretization rule for the integral in~\eqref{eq:feynmankac}. In practice, given a discretization of the SDE
characterized by an operator $\Qdt$, we use the 
schemes defined by the left point integration $\e^{\Dt W}\Qdt$, and by the trapezoidal 
integration $\e^{\Dt \frac{W}{2}}\Qdt\e^{\Dt \frac{W}{2}}$. 
They correspond respectively to the choices:
\begin{equation}
\label{eq:chichoice}
\chidt(q,q')=\Dt\, W(q) \quad \mbox{and}\quad  \chidt(q,q')=\Dt \left(\frac{ W(q) + W(q')}{2}
\right).
\end{equation}
The principal eigenvalue of the operator $\Lc + W$ is then estimated 
with~\eqref{eq:loglambda} through
\begin{equation}
\label{eq:lambdaMC}
\lambdadt=\frac{1}{\Dt}\log\left[\intX \Qdt^W\mathds{1}\ d\nuwdt \right]
\approx \frac{1}{\Dt} \log\left[ \frac{1}{\Nit}\sum_{n=0}^{\Nit-1}P^n \right],
\end{equation}
while the average of $\varphi$ is estimated by
\begin{equation}
\label{eq:phiMC}
\intX \varphi \, d\nuwdt \approx \frac{1}{\Nit} \sum_{n=0}^{\Nit-1}\widehat{\varphi}_n,
\end{equation}
where the $\approx$ sign indicates the approximation arising from the finiteness of the
number $M$ of replicas and of the number $\Nit$ of steps.
We do not take these errors into account  and ensure numerically that they are sufficiently
small in our simulations to observe the bias due to the timestep (this bias being quite
small in practice, this also motivates to study a one-dimensional model, see the numerical
results below). The reader interested
in the convergence rates of this type of algorithm when $M\to +\infty$ and $\Nit\to +\infty$ is
refered \textit{e.g.} to~\cite{del2003particle,dou2001seq,rousset2006control,el2007diffusion}.

%-------------------- Galerkin -------------------
\subsection{Galerkin discretization}
\label{sec:Galerkin}

We now make precise the Galerkin method that can be used to estimate $\lambdadt$ and 
$\intX \varphi\, d\nuwdt$. This discretization provides reference values for the Monte Carlo
method described in Section~\ref{sec:algo}. In particular, when $V=0$ and 
$\gamma=0$, the two methods should give the same result since the Euler scheme~\eqref{eq:EM} is exact in law in this 
specific case.

\paragraph{Choice of the Galerkin basis.}
Since we work with periodic functions, we consider the Galerkin subspace $\mathrm{Span}\{e_{-N},\hdots,e_N\}$ with
\[
e_n(q)=\e^{2 \ic\pi n q}.
\]
The generator of the SDE~\eqref{eq:oneDoverdamped} reads
\[
\Lc=(-V'+\gamma)\partial_q + \frac{\sigma^2}{2}\partial_q^2.
\] 
The operators $\Lc^\dagger$ and~$W$ are represented in this Galerkin subspace
by the matrices $L^N$, $B^N\in\mathbb{C}^{(2N+1)\times (2N+1)}$ defined as
\[
\forall\, m,n\in\{-N,\hdots,0,\hdots,N\}, \qquad L_{n,m}^N= \intX e_n (\Lc^\dagger e_m), \quad B_{n,m}^N = \intX W e_n e_m .
\]
The value of $N$ is chosen sufficiently large for all results to be converged with respect to 
this parameter. The only source of error in the quantities we compute then arises from the finiteness of the timestep 
$\Dt>0$, and possibly numerical quadratures to evaluate certain integrals. 
Our experience shows that $N=30$ is already sufficient for the applications described in Section~\ref{sec:results}.

\paragraph{References quantities for $\Dt = 0$.}
The invariant probability measure $\nu_W$ satisfies the eigenvalue problem $(\Lc^\dagger + W)\nu_W = \lambda \nu_W$.  
We compute a reference approximation $\lambda^N_0$ to~$\lambda$ by computing the eigenvalue of $L^N+B^N$ with the largest real part:
\[
(L^N + B^N)\mathcal{V}_{W,0}^N = \lambda_0^N \mathcal{V}_{W,0}^N.
\]
The associated eigenvector allows to construct the following approximation of~$\nu_W$:
\[
\nu_{W,0}^{N} = \sum_{k=-N}^N [\mathcal{V}_{W,0}^N]_k e_k.
\]
The normalization condition $[\mathcal{V}_{W,0}^N]_0 = 1$ ensures that $\nu_{W,0}^{N}$ has a total mass~1. Averages of observables~$\varphi$ are then estimated by computing the following integral
\[
\intX \varphi(q) \nu_{W,0}^{N}(q) \, dq
\]
using a one-dimensional quadrature rule. 

\paragraph{Reference quantities for $\Dt > 0$.}
We next approximate the evolution operators of the first order scheme
$\e^{\Dt W} \Qdt$ and of the second order one $\e^{\Dt \frac{W}{2}}\Qdt \e^{\Dt \frac{W}{2}}$, 
respectively as
\begin{equation}
\label{eq:matrices}
Q_{\Dt,1}^{W,N}=\e^{\Dt B^N}\e^{\Dt L^N}, \qquad Q_{\Dt,2}^{W,N}=\e^{\Dt \frac{B^N}{2}}
\e^{\Dt L^N} \e^{\Dt \frac{B^N}{2}}.
\end{equation}
For each value of~$\Dt$, we construct the above matrices, and compute their 
respective principal eigenvalues $\Lambda_{\Dt,1}^N$, $\Lambda_{\Dt,2}^N$ and eigenvectors
$\mathcal{V}_{W,\Dt}^{N,1}$, $\mathcal{V}_{W,\Dt}^{N,2}\in\mathbb{C}^{2N+1}$ by diagonalization
(still with the normalization condition $[\mathcal{V}_{W,\Dt}^{N,j}]_0 = 1$ for $j=1,2$).
We then consider the following approximations of the principal eigenvalue $\lambda$ of the Feynman--Kac operator $\Lc +W$, based on~\eqref{eq:loglambda}:
\begin{equation}
\label{eq:vpappli}
\lambda_{\Dt,1}^N=\frac{1}{\Dt}\log \Lambda_{\Dt,1}^N,
\qquad \lambda_{\Dt,2}^N=\frac{1}{\Dt}\log \Lambda_{\Dt,2}^N.
\end{equation}
Averages of $\varphi$ with respect to the invariant probability measure are approximated by the 
following quantity, using the eigenvectors $\mathcal{V}_{W,\Dt}^{N,1}$ and
$\mathcal{V}_{W,\Dt}^{N,2}$: for $j=1,2$,
\begin{equation}
\label{eq:phiappli}
\intX \varphi(q) \nu_{W,\Dt}^{N,j}(q) \, dq, \qquad  \nu_{W,\Dt}^{N,j} = 
\sum_{k=-N}^N 
[\mathcal{V}_{W,\Dt}^{N,j}]_k e_k.
\end{equation}
In view of Theorem~\ref{theo:numana}, we expect the
average of $\varphi$ to converge linearly in $\Dt$ for the first order scheme when $\Dt\to 0$, 
and quadratically for the second order scheme. We also use the TU-lemma to show 
that, by appropriately correcting the first order scheme, we recover the same results as for the second order scheme.
More precisely, we apply~\eqref{eq:TUcvW} with $\Udt^W=\e^{\Dt \frac{W}{2}}$, which leads to the following approximation of the average (estimated in practice using a numerical quadrature):
\begin{equation}
\label{eq:TUappli}
\frac{\dps \intX \e^{\Dt \frac{W(q)}{2}}\varphi(q)\nu_{W,\Dt}^{N,1}(q) \, dq}
{\dps \intX \e^{\Dt \frac{W(q)}{2}}\nu_{W,\Dt}^{N,1}(q) \, dq}.
\end{equation}
On the other hand, from Proposition~\ref{prop:appliTU}, the eigenvalues $\lambda_{\Dt,1}^N$
and $\lambda_{\Dt,2}^N$ should be equal, and therefore $\lambda_{\Dt,1}^N$ need not be corrected. 

\subsection{Numerical results}
\label{sec:results}

\paragraph{Zero-potential case.}
We first choose $V= 0$, $\sigma=\sqrt{2}$, $W(q) = (\cos(2\pi q))^2$ and 
$\varphi(q)=\exp(\cos(2\pi q))$. As mentioned earlier, in this case, the Euler 
scheme~\eqref{eq:EM} is exact in law, so that the only source of error arises from the
integration of the exponential weight. We consider the dynamics represented
by the operator $\e^{\Dt W}\Qdt$ and $\e^{\Dt \frac{W}{2}}\Qdt\e^{\Dt \frac{W}{2}}$
with $\Qdt=\e^{\Dt \Lc}$, and first compare the results of the Galerkin discretization
discussed in Section~\ref{sec:Galerkin}.  The results reported in Figure~\ref{fig:DMC} 
confirm our predictions: the averages of $\varphi$ converge at first and second order for
the first order and second order Galerkin schemes respectively; while the eigenvalues are the same,
as expected from Proposition~\ref{prop:appliTU}, and so both converge at second order. 
In this case, the numerical method based on~\eqref{eq:loglambda} is therefore more accurate
than the one based on averaging $W$  with~\eqref{eq:errordt} to compute the principal
eigenvalue $\lambda$, which would lead to errors of order~1 in the timestep (numerical results
not shown here).

\begin{figure}[h]
  \begin{subfigure}[h]{0.4\textwidth}  
    \includegraphics[width=\textwidth,angle=270]{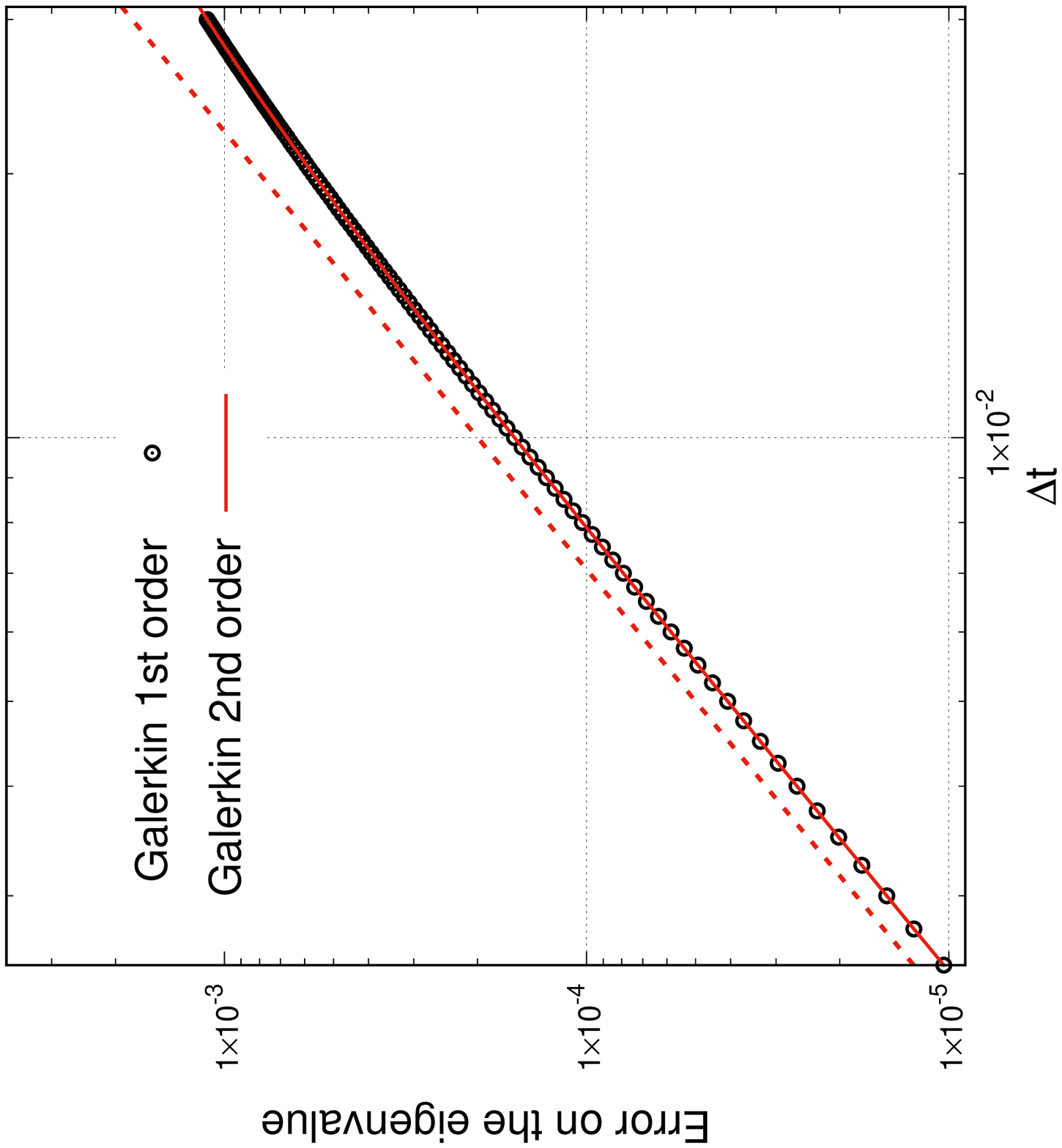}
    \caption{Convergence of the eigenvalue.}
  \end{subfigure}
  \qquad  \hspace{0.5cm}
  %add desired spacing between images, e. g. ~, \quad, \qquad, \hfill etc. 
  %(or a blank line to force the subfigure onto a new line)
  \begin{subfigure}[h]{0.4\textwidth}
    \includegraphics[width=\textwidth,angle=270]{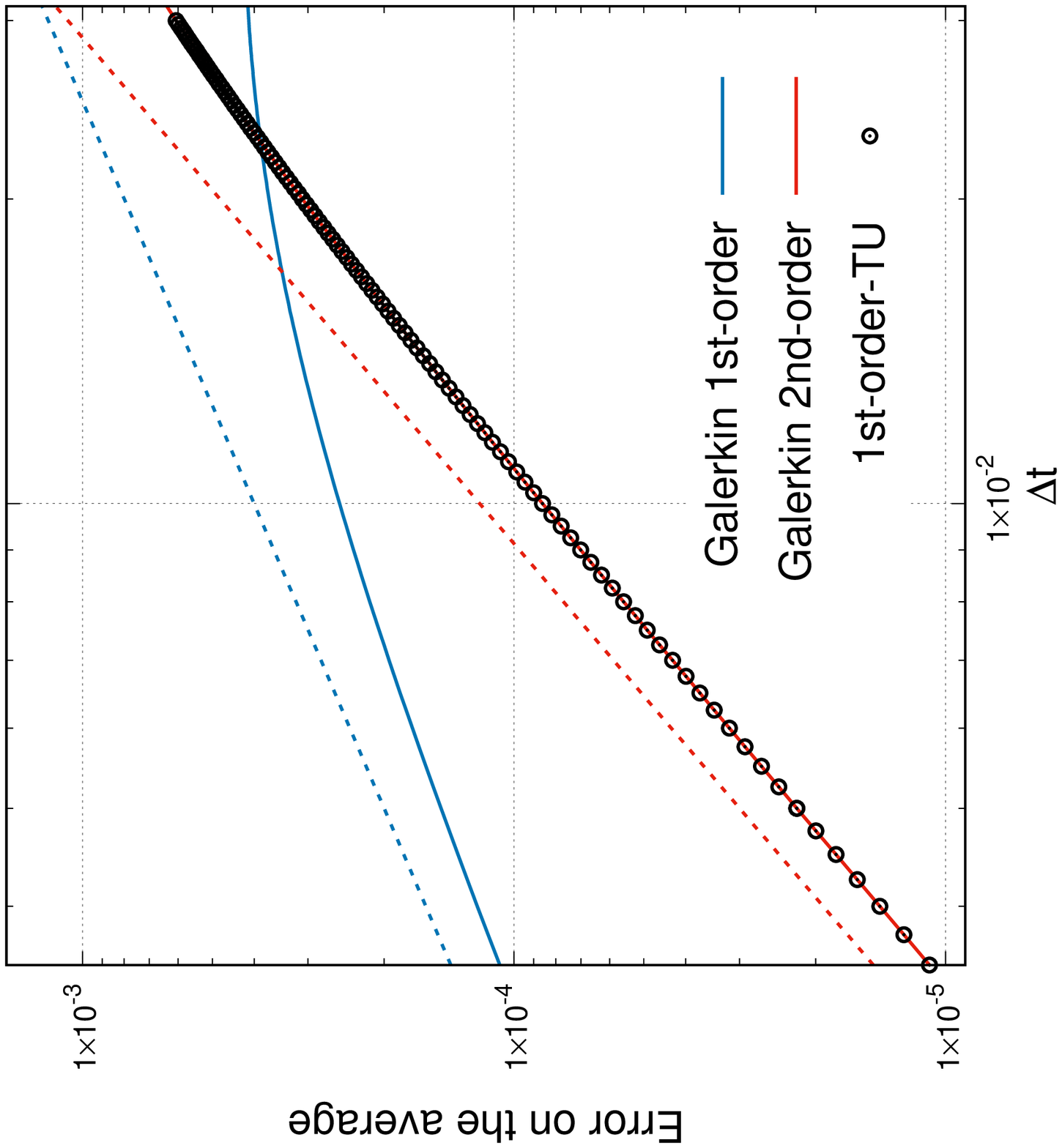}
    \caption{Convergence of the average of $\varphi$. }
  \end{subfigure}
  \caption{Estimated error on the principal eigenvalue and on the average of $\varphi$ with respect
    to the invariant probability measure as a function of the timestep, by Galerkin
    approximation. The eigenvalues are computed with~\eqref{eq:vpappli}.
    The first and second order averages of $\varphi$ correspond
    to~\eqref{eq:phiappli} with $j=1$ and $j=2$ respectively. The first order-TU scheme is
    computed with~\eqref{eq:TUappli}. The dashed lines show reference first and second order
    convergences.}
  \label{fig:DMC}
\end{figure}

We next consider the Monte Carlo scheme presented in Section~\ref{sec:algo}, taking
$M=5\times 10^4$ and an integration time $T=5\times 10^2$, with $\Nit=\floor{\frac{T}{\Dt}}$
for each timestep $\Dt$. We use half of the time for burn-in, 
and average in time over the second half of the simulation. Moreover, for each value of $\Dt$, 
we run~30 realizations in order to reduce the variance of the estimator and to estimate
error bars on the Monte Carlo estimates (not displayed on the pictures).
The choice of the function $\chidt$ depends on 
the scheme through~\eqref{eq:chichoice}. We compare in Figure~\ref{fig:DMC_MC} the results of 
the Monte Carlo algorithm with the Galerkin approximation, which serves as a reference. The 
agreement is very good, up to small errors arising from the finiteness of the population and 
of the simulation time. This result was expected since, given that the integration by the Euler
scheme is exact in law in this case, the Monte Carlo method must match exactly the Galerkin 
approximation provided $N$, $N_{\rm iter}$ and~$M$ are all sufficiently large.

\begin{figure}[h]
  \begin{subfigure}[h]{0.4\textwidth}  
    \includegraphics[width=\textwidth,angle=270]{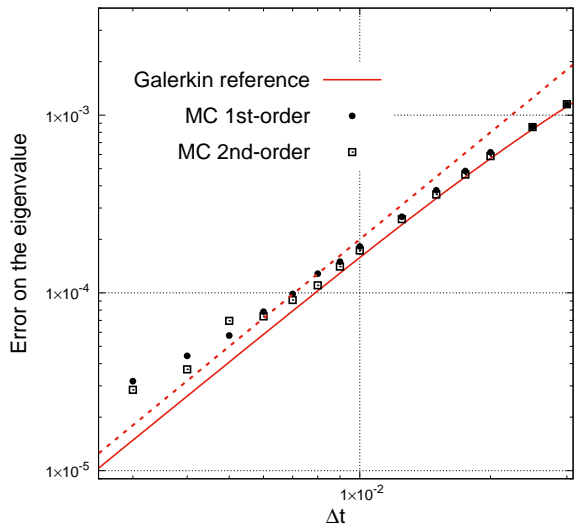}
    \caption{Convergence of the eigenvalue.}
  \end{subfigure}
  \qquad  \hspace{0.5cm}
  %add desired spacing between images, e. g. ~, \quad, \qquad, \hfill etc. 
  %(or a blank line to force the subfigure onto a new line)
  \begin{subfigure}[h]{0.4\textwidth}
    \includegraphics[width=\textwidth,angle=270]{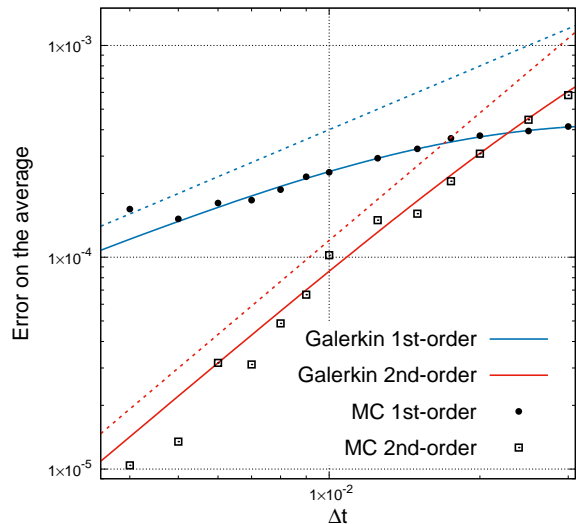}
    \caption{Convergence of the average of $\varphi$. }
  \end{subfigure}
  \caption{Estimation of the error for the principal eigenvalue and the
    average of $\varphi$ with respect
    to the invariant probability measure as a function of the timestep, by Monte Carlo simulation
    (with comparison to Galerkin, see Figure~\ref{fig:DMC}). The 
    eigenvalues and the averages of $\varphi$ are computed with~\eqref{eq:lambdaMC} 
    and~\eqref{eq:phiMC} respectively. The first and second order schemes are relative to
    the choice of the weight $\chidt$ in~\eqref{eq:chichoice}. The dashed lines show
    reference first and second convergences. For very small values of the error on the eigenvalue,
    we observe the bias due to the finite size of the population.}
  \label{fig:DMC_MC}
\end{figure}

\paragraph{Situation with a strong potential.}
We next show an application with a non-zero drift by setting $V(q)=\cos(2\pi q)$ and
$\gamma =1$. Let us recall that this dynamics is non-reversible since a constant function
is not the gradient of a smooth periodic potential. The other parameters are left unchanged.
Concerning the Galerkin approximation, we consider the two schemes described in 
Section~\ref{sec:Galerkin}, and characterized by the matrices defined in~\eqref{eq:matrices}.
For these schemes, the eigenvalues are the same and converge at second order (so we only
consider one scheme), while the averages of $\varphi$ converge at first and second
order respectively.

For the Monte Carlo algorithm described in Section~\ref{sec:algo}, we consider the three
following schemes: 
\begin{itemize}
\item $\Qdt$ is discretized with the Euler scheme~\eqref{eq:EM}, and $\chidt(q,q')=\Dt W(q)$
is chosen as the left point integration; in this case, the eigenvalue and the average of $\varphi$ 
converge at order one, so the scheme is referred to as \emph{first order}.
\item $\Qdt$ is discretized with the second order scheme~\eqref{eq:modified}, and we
set $\chidt(q,q')=\Dt W(q)$; in this case, the eigenvalue converges at second order whereas the average of
$\varphi$ converges
at first order only, so the scheme is referred to as \emph{hybrid scheme}.
\item $\Qdt$ is discretized with the second order scheme~\eqref{eq:modified}, and we
set $\chidt(q,q')=\Dt \left(W(q)/2 + W(q')/2\right)$, which corresponds to a trapezoidal rule
for the time integral; in this case, both the eigenvalue and the average of $\varphi$
converge at order two, so we refer to this scheme as \emph{second order}.
\end{itemize}

We present the numerical results obtained with the various schemes we consider in Figures~\ref{fig:strong_eig} (eigenvalues) and~\ref{fig:phi_strong_potential} (averages of~$\varphi$): 
\begin{itemize}
\item Concerning the eigenvalues computed with the Monte Carlo method,
we indeed observe first order convergence for the first order scheme, and second order convergence
for the hybrid and second order schemes. In particular, the results of the hybrid and the second
order scheme are exactly the same.
The Galerkin method also converges at second order, but with 
a much smaller prefactor. This is due to the fact that in this case most of the error is due to
the discretization of the dynamics rather than the discretization of the time integral. 
\item Concerning the average of $\varphi$, the first order scheme converges at order one, while
the hybrid and second order scheme converge at order two. We would have expected the hybrid
scheme to converge at first order but, once again, this is due to the fact that most of the error is due
to the discretization of the dynamics, and not to the time integral -- as shown by the
results of the Galerkin method, which amounts to observing the error due to the discretization
of the time integral only. We indeed observe first and second order convergence for the Galerkin approximation,
but we see that the error is orders of magnitude smaller than the one of the Monte Carlo
approximation. This explains why the Monte Carlo hybrid and second order schemes seem to provide the same results.
\end{itemize}

\begin{figure}[h]
  \centering
  \includegraphics[width=0.5\textwidth,angle=270]{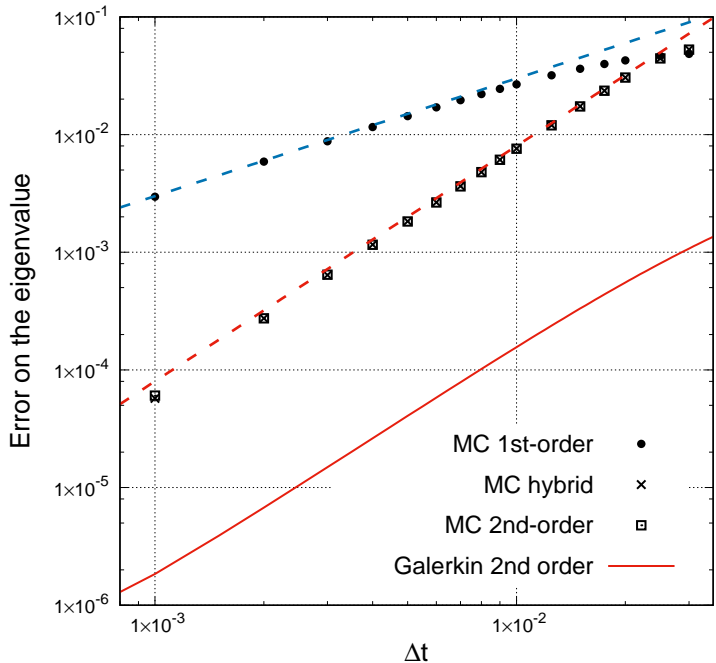}
  \caption{\label{fig:strong_eig} Estimation of the error on the principal eigenvalue as a
    function of the timestep, by Monte Carlo simulation and Galerkin approximation,
    for $V(q)=\cos(2\pi q)$. The Monte Carlo estimates of the  eigenvalues are computed
    with~\eqref{eq:lambdaMC}, while the Galerkin approximations of the eigenvalues 
    are obtained with~\eqref{eq:vpappli} for $j=1$ and $j=2$. The dashed lines show
  reference linear and quadratic convergences to zero.}
\end{figure}

\begin{figure}[h]
  \begin{subfigure}[h]{0.4\textwidth}  
    \includegraphics[width=\textwidth,angle=270]{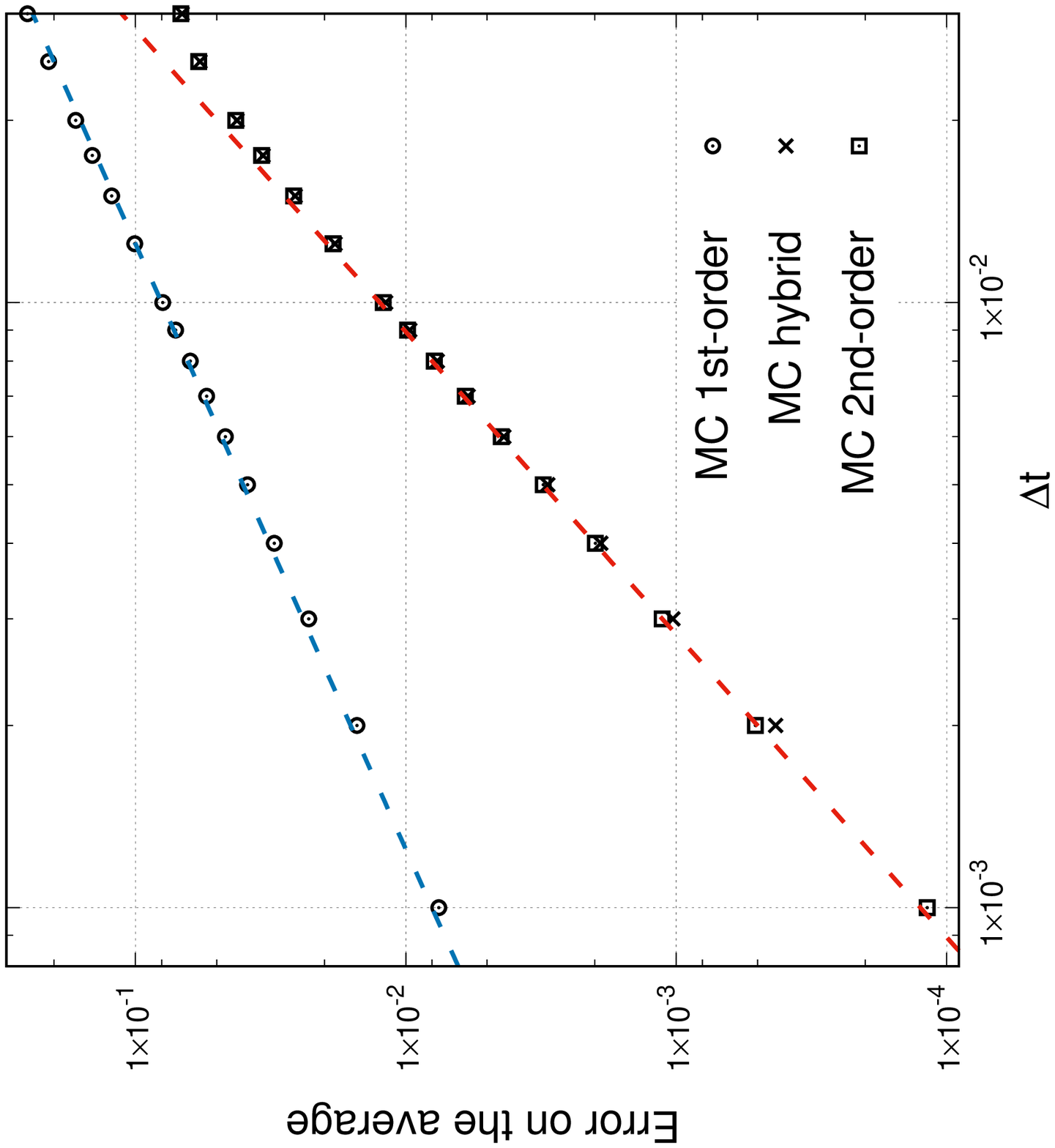}
    \caption{Convergence of the averages.}
  \end{subfigure}
  \qquad  \hspace{0.5cm}
  %add desired spacing between images, e. g. ~, \quad, \qquad, \hfill etc. 
  %(or a blank line to force the subfigure onto a new line)
  \begin{subfigure}[h]{0.4\textwidth}
    \includegraphics[width=\textwidth,angle=270]{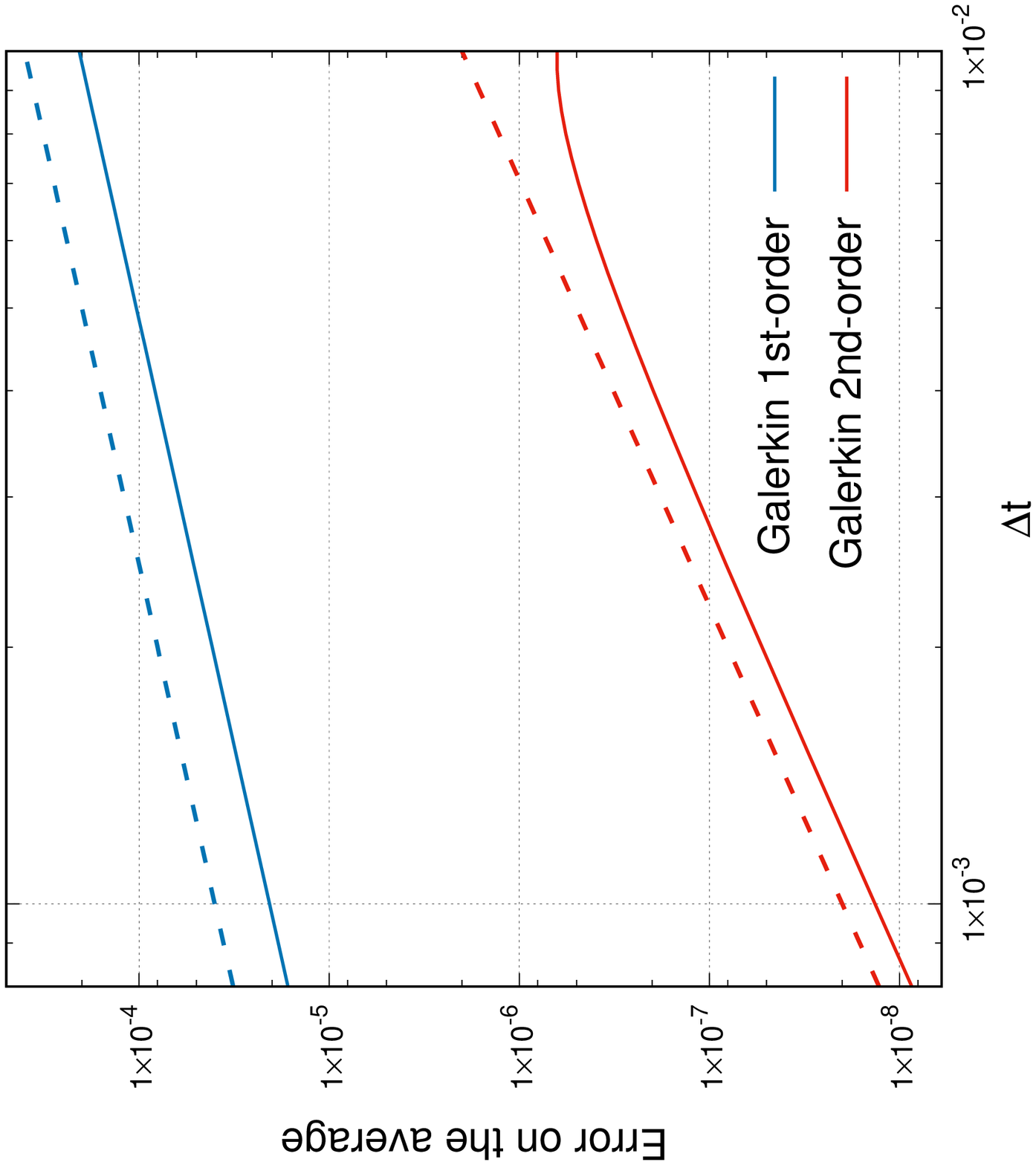}
    \caption{Zoom on the results obtained with Galerkin.}
  \end{subfigure}
  \caption{\label{fig:phi_strong_potential} Estimation of the error on the average of $\varphi$
    with respect to the invariant probability measure as a function of the timestep, by Monte
    Carlo simulation and Galerkin
    approximation, for $V(q)=\cos(2\pi q)$. The Monte Carlo estimates of 
    the averages of $\varphi$ are computed with~\eqref{eq:phiMC}, while the Galerkin approximation
    is obtained with~\eqref{eq:phiappli} for $j=1$ and $j=2$. The dashed lines show
    reference linear and quadratic convergences to zero.}
\end{figure}

\paragraph{Situation with a weak potential.}
In order to obtain a better trade-off between the error due to the discretization of the
dynamics and of the time integral, we run simulations with the same parameters as in the
previous situation but with a smaller potential energy $V(q)=0.02 \cos(2\pi q)$. The
results are the following:
\begin{itemize}
\item All the eigenvalues now seem to converge
at second order (see Figure~\ref{fig:phi_weak_potential}~(a)). This is due to the fact that the error due to the discretization of the
dynamics is very small, and that the discretization of the time integral, which gives
the dominant error term, always leads to an effective second order convergence. 
\item The behaviour of the average of $\varphi$ is more interesting (see Figure~\ref{fig:phi_weak_potential}~(b)). The Galerkin first and second order schemes provide first and second order
convergence respectively. The hybrid scheme exhibits a first
order convergence, that matches the Galerkin first order scheme for small timesteps. This result
can be expected since the two schemes match at order one. The first order scheme also converges
at first order but with a larger prefactor, which is due to the discretization of the 
dynamics. On the other hand, the second order Monte Carlo scheme converges at second order,
like the Galerkin second order scheme.
\end{itemize}

\begin{figure}[h]
  \begin{subfigure}[h]{0.4\textwidth}  
    \includegraphics[width=\textwidth,angle=270]{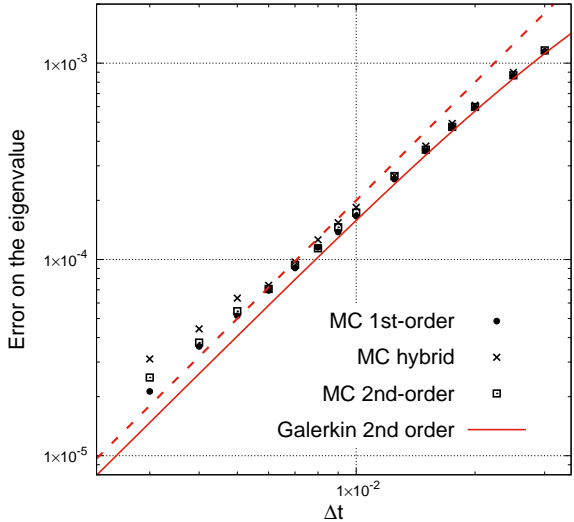}
        \caption{Convergence of the eigenvalue.}
  \end{subfigure}
  \qquad  \hspace{0.5cm}
  %add desired spacing between images, e. g. ~, \quad, \qquad, \hfill etc. 
  %(or a blank line to force the subfigure onto a new line)
  \begin{subfigure}[h]{0.4\textwidth}
    \includegraphics[width=\textwidth,angle=270]{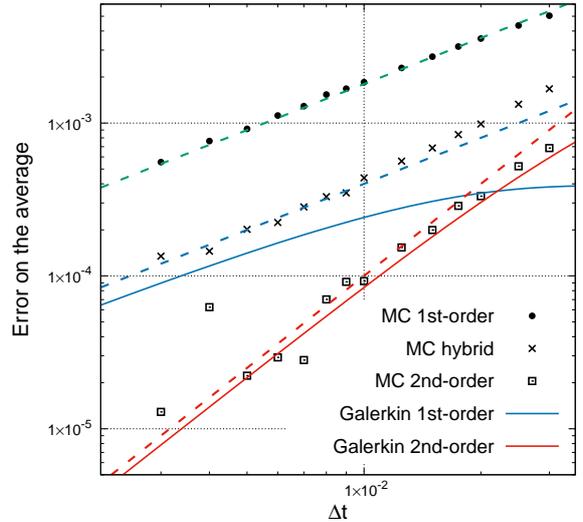}
    \caption{Convergence of the average.}
  \end{subfigure}
  \caption{\label{fig:phi_weak_potential}
    Estimation of the error on the principal eigenvalue and the
    average of $\varphi$ with respect to the invariant probability measure as a function of the
    timestep, for $V(q)=0.02 \cos(2\pi q)$. The Monte Carlo estimates of
    the averages of $\varphi$ are computed with~\eqref{eq:phiMC}, while the Galerkin approximations
    are obtained with~\eqref{eq:phiappli} for $j=1$ and $j=2$. The dashed lines show reference
    first and second order convergences. For very small values of the error on the eigenvalue,
    we observe the bias due to the finite size of the population. We also observe that the
  error on the average of~$\varphi$ becomes noisier below~$10^{-4}$.}
\end{figure}

\paragraph{Conclusion.}
The numerical applications we presented show the validity of our analysis on a simple test case.
However, we observe numerically that the prefactor of the leading error term depends on the 
choice of parameters. This has the consequence that some schemes may effectively seem to
exhibit an improved  order of convergence than expected, while they actually have a small
prefactor at leading order, depending on the discretization at hand. This observation
also motivates the study of a one-dimensional model: not only can the Galerkin discretization
be made sufficiently accurate by considering a very large number of basis functions, but
we can also run sufficiently long Monte Carlo simulations in order for the statistical
error to be negligible compared to the bias arising from the time step discretization.
Although the order of convergence would be harder to
observe for higher dimensional systems, the framework is still applicable and 
we refer to~\cite{lim2017fast} and references therein for examples in high dimension.

%--------------------------------------------------------
\section{Possible extensions}
\label{sec:extensions}

The analysis and simulations we performed in this work were done for SDEs with a non-degenerate 
noise on a torus. We however believe that most of our results could be extended to more general 
settings. The first issue is to study the long time behaviour of Feynman--Kac
dynamics and their discretizations when considering unbounded configuration spaces and/or
SDEs with degenerate noise such as inertial Langevin dynamics.
We recently addressed this problem in~\cite{ferre2018more} by using weighted function space in
the spirit of~\cite{hairer2011yet}. This provides criteria in terms of
growth conditions on~$b$ and~$W$ for extending Proposition~\ref{theo:continuousconvergence} and
Theorem~\ref{theo:feynmankacdiscr}, for instance, and H\"ormander-type conditions for dynamics
with degenerate noise.

%discretization 
%schemes (it is indeed well known by now that the existence of invariant probability measures 
%for numerical discretizations of SDEs with non globally Lipschitz drifts on unbounded spaces 
%requires some care, see for instance the example provided 
%in~\cite{mattingly2002ergodicity}).

Since the long time behaviour of unbounded dynamics has been studied, we would like to 
extend the error estimates on the ergodic properties of the dynamics
presented in Section~\ref{sec:numana} in this unbounded setting.
However, in the functional framework of~\cite{ferre2018more}, the stability property
(Assumption~\ref{as:stability}), which
is crucial for our analysis to hold, should be rephrased as the invariance
of a functional space (containing unbounded functions) under the action of
$\hw^{-1}(\Lc^* + W - \lambda)^{-1}(\hw \cdot)$. In the case $W=0$, which does not involve 
the eigenvector $\hw$ (since $h_0 =\mathds{1}$), this is already a quite technical result to 
obtain (see~\cite{kopec2014weak,kopec2015weak}). Here, the presence of the eigenvector $\hw$ 
adds a significant difficulty, which leaves the situation open.

Finally, in the context of large deviations, one is often interested in computing the rate
function, which is the Fenchel transform of the eigenvalue~$\lambda$ associated to a particular
function~$W$, see~\cite{dembo2010large}. It is an interesting and non-trivial problem to
transpose our error estimates on~$\lambda$ to error estimates on the rate function.

%Another point concerning the convergence of numerical schemes is the scaling of the
%factor $\varepsilon$ with $\Dt$, as noted in Remark~\ref{rem:minorization}. It would be
%interesting to prove that the convergence rate of the numerical scheme is uniform in the 
%timestep, in the sense that there would exist $C,\delta,\Dt^*>0$ such that 
%\[
%\forall \, n \geq 0, \quad \forall\, \Dt \in (0,\Dt^*), \qquad \| \mu_n - \muinfty \| \leq C \, \e^{-\delta n \Dt},
%\]
%for an appropriate norm. Such results can be proved when $W=0$, relying on the approach 
%provided in~\cite{hairer2011yet}, see for instance~\cite{BH13,leimkuhler2016computation,FS17}. 

%------------------------------------- Proofs -------------------------------------------------
\section{Proofs}
\label{sec:proofs}

%----- convergence au niveau continu ------------
\subsection{Proof of the results of Section~\ref{sec:continuous}}
\label{sec:proof_continuous}

Let us first give a result which shows that it suffices to prove Proposition~\ref{theo:continuousconvergence} for probability measures which admit a positive and bounded density with respect to the Lebesgue measure. This results relies on the regularizing properties of the underlying diffusion.

\begin{lemma}
\label{lem:P_alpha}
For any $\alpha>0$, denote by $\mathcal{P}_{\alpha}(\X)$ the subspace of probability measures which admit a smooth density with respect to the Lebesgue measure, and whose density is bounded below by $\alpha>0$ and bounded above by $1/\alpha$. Then there exists $\alpha_* > 0$ such that $\Phi_1(\mu) \in \mathcal{P}_{\alpha_*}(\X)$ for any $\mu \in \PX$.
\end{lemma}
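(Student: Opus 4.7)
The plan is to compute an explicit expression for the density of $\Phi_1^W(\mu)$ using the Feynman--Kac transition kernel, and then to invoke standard elliptic/parabolic regularity and heat-kernel bounds on the torus.

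First, I would introduce the kernel $k(q_0,q)$ of the unnormalised semigroup $P_1^W$ at time $t=1$, so that
\[
\left(P_1^W \varphi\right)(q_0) = \E_{q_0}\!\left[\varphi(q_1)\,\e^{\int_0^1 W(q_s)\,ds}\right] = \intX k(q_0,q)\,\varphi(q)\,dq.
\]
Using the definition~\eqref{eq:feynmankac}, for any test function $\varphi$,
\[
\Phi_1^W(\mu)(\varphi) = \frac{\dps \intX d\mu(q_0)\intX k(q_0,q)\varphi(q)\,dq}{\dps \intX d\mu(q_0)\intX k(q_0,q')\,dq'},
\]
so that by Fubini the measure $\Phi_1^W(\mu)$ has density
\[
\rho(q) = \frac{\dps \intX k(q_0,q)\,d\mu(q_0)}{\dps \intX\!\intX k(q_0,q')\,dq'\,d\mu(q_0)}
\]
with respect to the Lebesgue measure on $\X$.

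Next I would prove two properties of $k$: (i) it is smooth in $q$ and (ii) it is bounded above and below by positive constants, uniformly in $(q_0,q) \in \X\times\X$. For (i), as a function of $q$, the kernel satisfies the forward equation $\partial_t k = (\Lc\dag + W)k$; since $\Lc\dag + W$ is a uniformly elliptic second-order operator with smooth coefficients on the compact manifold $\X$, parabolic regularity implies that $q\mapsto k(q_0,q)$ belongs to $\Cinfty(\X)$, with all derivative bounds uniform in $q_0$ by compactness. For (ii), since $W$ is continuous on the compact set $\X$, it is bounded between two constants $w_- \leq W \leq w_+$, and comparing expectations gives $\e^{w_-}p(q_0,q)\leq k(q_0,q)\leq \e^{w_+}p(q_0,q)$, where $p$ is the transition density at time 1 of the unweighted dynamics~\eqref{eq:overdamped}. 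Standard Gaussian-type bounds (or the parabolic Harnack inequality) for the non-degenerate elliptic diffusion on the torus provide positive constants $c_-\leq p(q_0,q)\leq c_+$ uniformly in $(q_0,q)$.

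Combining these two facts, the numerator of $\rho$ is bounded between $\e^{w_-}c_-$ and $\e^{w_+}c_+$, and the denominator lies in the same range multiplied by $|\X|$, yielding
\[
\frac{\e^{w_-}c_-}{\e^{w_+}c_+\,|\X|} \leq \rho(q) \leq \frac{\e^{w_+}c_+}{\e^{w_-}c_-\,|\X|}
\]
uniformly in $q$ and $\mu$. Together with the smoothness of $\rho$ (obtained by differentiating under the integral, which is justified by the uniform bounds on the derivatives of $k$ in $q$), this gives the existence of an $\alpha_*>0$ independent of $\mu$ for which $\Phi_1^W(\mu)\in\mathcal{P}_{\alpha_*}(\X)$. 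There is no serious obstacle here: the argument is essentially bookkeeping once the classical smoothness and two-sided bounds on the heat kernel $p$ of the non-degenerate diffusion on the torus are invoked; the role of $W$ is purely multiplicative thanks to its boundedness.
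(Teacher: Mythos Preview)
Your argument is correct and follows essentially the same route as the paper: both introduce the integral kernel of $P_1^W$, invoke parabolic regularity for smoothness, transfer two-sided bounds from the unweighted heat kernel via the boundedness of $W$, and then read off uniform upper and lower bounds on the density of $\Phi_1^W(\mu)$. Your version is slightly more explicit (writing out the comparison $\e^{w_-}p\leq k\leq \e^{w_+}p$ and justifying differentiation under the integral), but there is no substantive difference in strategy.
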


\begin{proof}
Note that, for any $\varphi\in\S$,
\[
\left(P_t^W \varphi\right)(q) = \intX p_t^W(q,q') \varphi(q') \, dq',
\]
where $p_t^W$ is the integral kernel of the semigroup $\e^{t (\Lc+W)}$. By parabolic regularity
(see for instance~\cite{evans2010partial}), the integral kernel is smooth for any $t>0$. It is
also positive when $W=0$ by the irreducibility properties of the underlying non-degenerate
diffusion: there exists $\eta > 0$ such that (setting $t=1$)
\[
\forall \, (q,q') \in \X^2, \qquad \eta \leq p_1^0(q,q') \leq \frac1\eta.
\]
Given that $W$ is bounded, a similar property holds for $p_t^W$: there exists $\alpha > 0$ such that 
\[
\forall\, (q,q') \in \X^2, \qquad \sqrt{\alpha} \leq p_1^W(q,q') \leq \frac{1}{\sqrt{\alpha}}.
\]
Since, for any bounded measurable function $\varphi$,
\[
\Phi_1(\mu)(\varphi)= \frac{\mu(P_1^W\varphi)}{\mu(P_1^W\mathds{1})} = 
\frac{1}{\mu(P_1^W\mathds{1})} \int_\X \int_\X \varphi(q') p_1^W(q,q') \, \mu(dq)dq',
\]
it follows that $\Phi_1(\mu)$ has a smooth density with respect to the Lebesgue measure, denoted by $F_{1,\mu}$:
\[
F_{1,\mu}(q) = \frac{1}{\mu(P_1^W\mathds{1})}  \int_\X p_1^W(q',q) \, \mu(dq').
\]
Moreover, since $\mu(P_1^W\mathds{1}) \geq \sqrt{\alpha}$, it holds
\[
\alpha \leq F_{1,\mu} \leq \frac1\alpha,
\]
which gives the claimed result. 
\end{proof}

We can now provide the proof of Proposition~\ref{theo:continuousconvergence}.

\begin{proof}[Proof of Proposition~\ref{theo:continuousconvergence}]
In view of the semigroup property $\Phi_t(\mu)=\Phi_{t-1}(\Phi_1(\mu))$ when $t \geq 1$, it 
is sufficient by Lemma~\ref{lem:P_alpha} to prove the result for measures 
$\mu \in \mathcal{P}_\alpha(\X)$, where $\alpha >0$. The proof is conducted in two steps: we 
first prove a convergence result for the linear semigroup $P_t^{W-\lambda}$ in $L^2(\nu)$ and any times $t \geq 0$, and then rely on the fact that any probability measure in $\mathcal{P}_\alpha(\X)$ is equivalent to~$\nu$ to obtain~\eqref{eq:feynmankaccv}.

Introduce the projector (different from the one defined in~\eqref{eq:proj})
\[
\widehat{\Pi}_W \varphi = \varphi - \hat{h}_W \frac{\big\langle \varphi, h_W \big\rangle_{L^2(\nu)}}
{\left\langle \hat{h}_W, h_W \right\rangle_{L^2(\nu)}}
 = \varphi - \hat{h}_W \frac{\int_\X \varphi \, d\nu_W}
{\int_\X \hat{h}_W \, d\nu_W}. 
\]
A simple computation shows that $\widehat{\Pi}_W$ commutes with $\Lc+W$ and $P_t^{W-\lambda}$. It is easily seen that the spectrum of the operator $\widehat{\Pi}_W (\Lc+W-\lambda)\widehat{\Pi}_W$ is 
% cf. si (\Lc+W-\lambda)f = Ef, on applique \widetilde{\Pi}_W des deux cotes, et on voit que \widetilde{\Pi}_W f est encore vect pp ; sauf pour f = h_W qui s'annule...
\[
\sigma(\Lc+W-\lambda) \setminus \{ 0 \} \subset \Big\{z \in \mathbb{C}, \, \mathrm{Re}(z) \leq -\delta_W \Big\},
\]
and that the associated semigroup satisfies $\e^{t \widehat{\Pi}_W (\Lc+W-\lambda)\widehat{\Pi}_W} = P_t^{W-\lambda} \widehat{\Pi}_W$. By the Hille--Yosida theorem (see for instance~\cite{Pazy,DautrayLions5}), there exists therefore a constant $C>0$ such that, for any $\varphi\in L^2(\nu)$, 
\begin{equation}
\label{eq:HY}
\forall\, t \geq 0, \qquad \left\| P_t^{W-\lambda} \widehat{\Pi}_W \varphi \right\|_{L^2(\nu)} \leq C\, \e^{-\delta_W t} \| \varphi \|_{L^2(\nu)}.
\end{equation}

We now show that~\eqref{eq:HY} implies the convergence result~\eqref{eq:feynmankaccv} for the class of probability measures $\mathcal{P}_{\alpha}(\X)$. For a given $\mu\in\mathcal{P}_{\alpha}(\X)$ and $\varphi\in L^2(\nu)$, 
\[
\begin{aligned}
\mu\left[ \left|P_t^{W-\lambda}\widehat{\Pi}_W \varphi\right|\right] 
&= \intX \left| P_t^{W-\lambda}\widehat{\Pi}_W \varphi \right| \, d\mu \leq \frac{1}{\alpha} \intX \left| P_t^{W-\lambda}\widehat{\Pi}_W \varphi
 \right|  \, \frac{d \nu}{\inf_{\X}\nu}
\\ & \leq \frac{1}{\alpha \inf_{\X}\nu} \left\| P_t^{W-\lambda}\widehat{\Pi}_W \varphi \right\|_{L^2(\nu)} \leq \frac{C }{\alpha \inf_{\X}\nu} \e^{-\delta_W t} \left\| \varphi\right\|_{L^2(\nu)},
\end{aligned}
\]
where we used a Cauchy-Schwarz inequality on $L^2(\nu)$ to go from the first to the
second line. The latter computation shows that, for any $\mu\in \mathcal{P}_{\alpha}(\X)$ 
and $\varphi\in L^2(\nu)$, there are functions $a_t,b_t$ for which 
\[
\mu\left(P_t^{W-\lambda}\varphi\right) = \frac{\dps \intX \hh\,d\mu}{\dps \int_\X \hat{h}_W \, d\nu_W} \intX \varphi\, d\nuw +a_t, 
\qquad 
\mu\left(P_t^{W-\lambda}\mathds{1}\right)= \frac{\intX \hh\,d\mu}{\dps \int_\X \hat{h}_W \, d\nu_W} +b_t,
\]
with $|a_t|\leq K \|\varphi \|_{L^2(\nu)} \e^{-\delta_W t}$  
and $| b_t | \leq K \e^{-\delta_W t}$ for some constant $K>0$ independent of $\mu$ and $\varphi$. 
Moreover, there exists $\varepsilon >0$ such that $\varepsilon \leq \hh\leq 1/\varepsilon$.
Note also that $|b_t| \leq \varepsilon^2$ for $t \geq \ln(K/\varepsilon^2)/\delta_W$ and that
\[
\frac{\intX \hh \,d\mu}{\int_\X \hat{h}_W \, d\nu_W} \geq \varepsilon^2.
\]
Since 
\[
\Phi_t^W(\mu)(\varphi) = \frac{\mu\left(P_t^{W}\varphi\right)}{\mu\left(P_t^{W}\mathds{1}\right)} = \frac{\mu\left(P_t^{W-\lambda}\varphi\right)}{\mu\left(P_t^{W-\lambda}\mathds{1}\right)},
\]
it follows that, for $t \geq \ln(2K/\varepsilon)/\delta_W$,
\[
\begin{aligned}
\left| \Phi_t^W(\mu)(\varphi) - \intX \varphi\,d\nuw \right| & =
\left| \frac{ \intX \hh \,d\mu \intX \varphi \, d\nuw + a_t \int_\X \hat{h}_W \, d\nu_W}{\intX \hh \,d\mu+ b_t \int_\X \hat{h}_W \, d\nu_W}  - \intX \varphi\,d\nuw \right|
= \left| \frac{\left(a_t - b_t \intX \varphi \, d\nuw\right)\int_\X \hat{h}_W \, d\nu_W}{\intX \hh \,d\mu+b_t\int_\X \hat{h}_W \, d\nu_W} \right| \\
&
\leq \frac{1}
     { \frac{\intX \hh \,d\mu }{\int_\X \hat{h}_W \, d\nu_W} -|b_t| }
  \left( |a_t| + |b_t| \left|\intX  \varphi \, d\nuw\right|
  \right)\\
& \leq \frac{K}{2\varepsilon^2} \left(\left\|\varphi \right\|_{L^2(\nu)} + \left|\intX \varphi \, d\nuw\right| \right) \e^{-\delta_W t}.
\end{aligned}
\]
The inequality 
\[
\left|\intX \varphi \, d\nuw\right| = \left|\intX \varphi h_W \, d\nu\right| \leq \|h_W\|_{L^2(\nu)} \|\varphi \|_{L^2(\nu)}
\]
allows to obtain the desired conclusion.
\end{proof}

Let us conclude this section with the proof of Proposition~\ref{prop:inversibility}.

\begin{proof}[Proof of Proposition~\ref{prop:inversibility}]
The exponential convergence result~\eqref{eq:HY} implies that the operator $\widehat{\Pi}_W(\Lc + W - \lambda)\widehat{\Pi}_W$ is invertible on $\mathrm{Ran}(\widehat{\Pi}_W) = L_W^2(\nu)$ % cf. \widetilde{\Pi}_W \varphi = \varphi equivalent to <\varphi,h_W> = 0 ; and \varphi in Ran(\widetilde{\Pi}_W) equivalent to Pi_W \varphi = \varphi (=\widetilde{\Pi}_W \psi for some \psi)
with inverse given by
\[
\Big( \widehat{\Pi}_W(\Lc + W - \lambda)\widehat{\Pi}_W \Big)^{-1} = - \int_0^{+\infty} P_t^{W - \lambda}\widehat{\Pi}_W \, dt.
\]
The solution to $(\Lc + W - \lambda)u=g$ with $g\in L_W^2(\nu)$ then admits a unique solution in $L_W^2(\nu)$. By elliptic regularity, $u \in \Sw$ when $g \in \Sw$. The result for $\Lc^* + W - \lambda$ can be obtained by a similar reasoning.
\end{proof}

Note that, alternatively, it would have been possible to resort to the Fredholm alternative to prove Proposition~\ref{prop:inversibility}.

%----- Total variation convergence of the discrete semigroup
\subsection{Proof of Theorem~\ref{theo:feynmankacdiscr}}
\label{sec:feynmankacdiscr}

Theorem~\ref{theo:feynmankacdiscr} is a rewriting of~\cite[Corollary 2.5]{del2001stability},
which is stated in the context of a finite state space. In order for the paper to be self-contained,
we prove Theorem~\ref{theo:feynmankacdiscr} in our setting of continuous
but compact state space, and in the simplified case of a time-homogeneous Markov chain,
adapting the arguments of~\cite{del2001stability}.
The idea is to prove some contraction property using the Dobrushin coefficient defined
in Appendix~\ref{sec:dobrushin} and the reformulation~\eqref{eq:mupath} below of the semigroup.
We work on the space of  probability measures $\PX$ endowed with the total variation distance.

Define the weights
\[
g_n = (\Qdt^W)^{n}\mathds{1},
\]
and the Markov operator $S_n$ as
\begin{equation}
\label{eq:Sn}
(S_n\varphi)(q) = \frac{ \Qdt^W(g_n \varphi)(q)}{(\Qdt^W g_n)(q)}.
\end{equation}
The dynamics~\eqref{eq:generaldiscr} can then be rephrased as 
\begin{equation}
\label{eq:mupath}
\mu_n(\varphi) = \frac{ \mu\left(g_n (K_n\varphi)   \right) }{\mu(g_n)}, \quad
K_{n+1}=S_n K_n, \quad K_0=\Id.
\end{equation}
This equality can be proved by induction. The result is clear for $n=0$. For $n=1$, we have
that (with $\mu_0 = \mu$)
\[
\frac{ \mu\left(g_1 (K_1\varphi)   \right) }{\mu(g_1)}
= \frac{ \mu\left(\Qdt^W\mathds{1} \, S_0\varphi   \right) }{\mu\left(\Qdt^W\mathds{1}\right)}
= \frac{ \mu\left( \Qdt^W\varphi \right) }{\mu\left(\Qdt^W \mathds{1}\right)}=\mu_1(\varphi).
\]
Assuming that $\mu_n$ satisfies~\eqref{eq:mupath} at rank $n$, using~\eqref{eq:generaldiscr} and
recalling the definition~\eqref{eq:def_mu_Q}, 
\[
\mu_{n+1}(\varphi) = \frac{\mu\left( (\Qdt^W)^{n+1}(\varphi) \right)}
{\mu\left( (\Qdt^W)^{n+1}\mathds{1} \right)}
= \dps \frac{\left(\mu\Qdt^W\right)\left( (\Qdt^W)^{n}(\varphi) \right)}
{\left(\mu\Qdt^W\right)\left( (\Qdt^W)^{n}\mathds{1} \right)}
 =  \Phi_{\Dt,n}^W\left(\mu\Qdt^W\right)(\varphi),
\]
so that, using the recursion hypothesis and $\Qdt^W g_n = g_{n+1}$, it follows
\[
\begin{aligned}\dps
\mu_{n+1}(\varphi) = & \dps \frac{\dps \left(\mu \Qdt^W\right)\left(g_n (K_n\varphi)   
\right) }{(\mu \Qdt^W)(g_n)}
  =   \dps  \frac{ \mu\left( \Qdt^W(g_n (K_n\varphi))   \right) }{\mu(\Qdt^Wg_n)}
\\ =  & \dps  \frac{ \mu\left( \Qdt^W(g_n) S_n(K_n\varphi)\right)  }{\mu(\Qdt^Wg_n)}
  =  \dps  \frac{\dps \mu\left( g_{n+1} (K_{n+1}\varphi)   \right) }
{\dps \mu\left(g_{n+1}\right)},
\end{aligned}
\]
which concludes the recurrence.

We next introduce the familly of operators $T_n:\PX\to\PX$ defined by:
\[
\label{eq:Tn}
\forall\,\mu\in\PX, \quad \forall\, \varphi\in\S, \qquad
(\mu T_n)(\varphi)=\frac{\mu\left( g_n \varphi \right) }{\mu(g_n)},
\]
so that from~\eqref{eq:mupath} we have $\mu_n =\Phi_{\Dt,n}^W(\mu)=  \mu T_n K_n$. 
Using the definitions of Appendix~\ref{sec:dobrushin}, we obtain, for two initial
measures $\mu,\nu\in\PX$,
\[
\|\mu_n - \nu_n \|\TV =\|\mu T_n K_n-\nu T_n K_n \|\TV \leq \vertiii{K_n}
\,  \| \mu T_n - \nu T_n \|\TV. 
\]
Given that $T_n:\PX\to\PX$, we can bound $\|\mu T_n - \nu T_n \|\TV$ by $2$. The next step
consists in studying the contraction induced by the operator $K_n=S_{n-1} S_{n-2} \hdots S_1$,
where $S_k$ is defined in~\eqref{eq:mupath}. We have
\[
\vertiii{K_n} \leq \prod_{k=0}^{n-1}\vertiii{S_k},
\]
so that, using the relationship~\eqref{eq:contractiondobrushin},
\[
\|\mu_n - \nu_n \|\TV \leq 2 \prod_{k=0}^{n} \left( 1 - \alpha(S_k) \right).
\]
The last step consists in using Assumption~\ref{as:minorization} in order to obtain a 
lower bound on $\alpha(S_k)$ independent of~$k$. First, for all $q\in\X$ and
$A\subset\X$,
\[
S_k(q,A)= \frac{\Qdt^W(g_k \mathds{1}_A )(q)}{\Qdt^W(g_k)(q)} \geq \varepsilon^2 
\frac{\eta(g_k \mathds{1}_A )}{\eta(g_k)}.
\]
Then, it follows from definition~\eqref{eq:dobrushin} that
\[
\label{eq:lowerbound}
\alpha(S_k) = \inf_{\substack{ q,q'\in\X \\ \{A_i\}_{1 \leq i \leq m} \subset\X }} \! \left\{\sum_{i=1}^m \min \left(  S_k(q,A_i), S_k(q',A_i)\right) \right\}
\geq \varepsilon^2 \underset{ \{A_i\}_{1 \leq i \leq m} \subset\X}{\inf}\left\{\frac{\eta\left(g_k \sum_{i=1}^m \mathds{1}_{A_i} \right) } {\eta(g_k)} \right\} = \varepsilon^2, 
\]
since the infimum is taken over partitions $(A_i)_{i=1}^m$ of $\X$.
As a result, we obtain that, for all measures $\mu,\nu\in\PX$,
\begin{equation}
\label{eq:contractionTV}
\left\| \Phi_{\Dt,n}^W(\mu) - \Phi_{\Dt,n}^W(\nu) \right\|\TV \leq 2 \left(1-\varepsilon^2
\right)^n.
\end{equation}
Setting $\nu=\mu_m$ for $m\in\N$ and using the semigroup property, we get
\begin{equation}
\label{eq:exp_Cauchy_like}
\left\| \Phi_{\Dt,n}^W(\mu) - \Phi_{\Dt,n+m}^W(\mu) \right\|\TV \leq 2 \left(1-\varepsilon^2\right)^n,
\end{equation}
so that $(\mu_n)_{n\geq 1}$ is a Cauchy sequence in $\PX$. By completeness of $\PX$ for the
total variation norm, we can conclude that, for any initial measure $\mu$, 
there exists $\muinfty$ such that $\mu_n\to\muinfty$ in total variation norm. Then using the 
one step formulation of the dynamics~\eqref{eq:etadynamics} and the semigroup property, we 
obtain with the choice $\nu=\K\mu$,
\[
\left\| \Phi_{\Dt,n}^W(\mu) - \K \Phi_{\Dt,n}^W(\mu) \right\|\TV \leq 
2 \left(1-\varepsilon^2\right)^n,
\]
so that, taking $n\to\infty$ and using the continuity of $\K$ on $\PX$ endowed with the total variation norm, it follows that $\muinfty=\K\muinfty$. Passing to the limit $m\to+\infty$ in~\eqref{eq:exp_Cauchy_like}, 
\[
\| \mu_n - \muinfty \|\TV \leq 2 \left(1-\varepsilon^2\right)^n.
\]
Finally, it follows from~\eqref{eq:contractionTV} that the limit $\muinfty$ does not depend on 
the initial measure $\mu$.

%----- Proofs related to Theorem numana --------------------------
\subsection{Proofs related to Theorem~\ref{theo:numana}}

%---------------------------------------------------------------
\subsubsection{Proof of Lemma~\ref{lem:approxstationarity}}
\label{sec:approxstationarity}

The idea is to approximate at leading order the stationary measure $\nuwdt$ as $(1+\Dt^p f)\nuw$, since we expect the invariant probability measure to be correct at order $p$. We start from the stationarity equation~\eqref{eq:approxstationary} and search for a function~$f \in \S$ and a remainder~$R_{W,\Dt}:\S\to\R$ satisfying~\eqref{eq:uniformbound} such that, for all $\phi \in \S$,
\begin{equation}
\label{eq:intermediate}
\intX (\Qdt^W \phi)(1+\Dt^p f)\, d\nuw -\left( \intX \Qdt^W\mathds{1}(1+\Dt^p f )
\,d\nuw\right) \left( \intX \phi (1 +\Dt^p f )\, d\nuw\right) = \Dt^{p+2}R_{W,\Dt}\phi.
\end{equation}
In view of the expansion~\eqref{eq:dlsemigroupgeneral} of~$\Qdt^W$ and of the invariance relation~\eqref{eq:invariance}, the first term
of the left hand side is
\[
\begin{aligned}
& \intX \left(\phi + \Dt\At_1\phi + \hdots +\Dt^p \At_p\phi +\Dt^{p+1}\At_{p+1}\phi +\Dt^{p+2} \Rm_{W,\Dt} \phi \right) \left( 1 +\Dt^p f\right) d\nuw \\
& = \left( 1 + a_1\Dt + \hdots + a_p \Dt^p \right)\intX\phi\, d\nuw +\Dt^p\intX \phi f\, d\nuw + \Dt^{p+1}\intX \left( \At_{p+1}\phi +f(\At_1\phi)\right) d\nuw +\Dt^{p+2} R_{W,\Dt}\phi,
\end{aligned}
\]
where $R_{W,\Dt}$ gathers the terms of order at least $p+2$, and is uniformly bounded in $\Dt$ for $0<\Dt\leq\Dt^*$ in the sense of~\eqref{eq:uniformbound} when $f\in\S$. On the other hand, the second term on the left hand side of~\eqref{eq:intermediate} can be written as, using again~\eqref{eq:invariance},
\[
\begin{aligned}
&  \left( 1 + \Dt a_1 + \hdots + \Dt^p a_p + \Dt^p \intX f\, d\nuw 
+\Dt^{p+1} \intX \left(\At_{p+1}\mathds{1} + f (\At_1\mathds{1})\right) \,d\nuw \right)
\intX \phi(1+\Dt^p f)\,d\nuw \\
& \qquad + \Dt^{p+2}R_{W,\Dt}\phi \\
& = \left( 1 +\Dt a_1 + \hdots + \Dt^p a_p\right) \intX \phi\, d\nuw 
+\Dt^p \left( \int\phi\, d\nuw \intX f d\nuw + \intX \phi f\, d\nuw \right) \\
& +\Dt^{p+1} \left( \intX \At_{p+1}\mathds{1}\,d\nuw \intX \phi\, d\nuw
+a_1 \intX \phi f \, d\nuw + \intX f(\At_1\mathds{1})\,d\nuw \intX \phi\, d\nuw
\right) + \Dt^{p+2}R_{W,\Dt}\phi,
\end{aligned}
\]
where $R_{W,\Dt}$ is uniformly bounded in $\Dt$ in the sense of~\eqref{eq:uniformbound} when $f\in\S$. We can now equate the different orders in powers of $\Dt$ on both sides of~\eqref{eq:intermediate} and choose $f$ such that only a remainder of order $p+2$ remains. The terms $a_k\Dt^k\intX\phi\, d\nuw$ cancel, so the first non-trivial condition to be satisfied to eliminate terms of order $\Dt^p$ reads
\[
\intX \phi f\, d\nuw =\left( \intX\phi\, d\nuw\right)\left( \intX f\, d\nuw\right) 
+ \intX \phi f\, d\nuw.
\]
This equality is satisfied for all $\phi\in\S$ if and only if (take \textit{e.g.} $\phi=f$)
\begin{equation}
\label{eq:fortho}
\intX f\, d\nuw =0.
\end{equation}
The condition arising from the equality of terms of order $\Dt^{p+1}$ is
\[
\intX \left(\At_{p+1} \phi + f (\At_1\phi) \right)\hw \, d\nu = a_1 \intX \phi f \, d\nuw
+ \left(\intX \left( (\At_{p+1}\mathds{1}) +f(\At_1\mathds{1}) \right) \hw \, d\nu\right)
\intX \phi\, d\nuw.
\]
Using that $\At_1\mathds{1}=W$ along with condition~\eqref{eq:invariance}, we have $a_1=\lambda$. In addition, taking adjoints in $L^2(\nu)$ and recalling $\At_1 = \A_1+W$, 
\[
\intX \phi \left( (\At_{p+1})^* \hw + (\A_1^* +W - \lambda)(\hw f) \right) d\nu
= \left( \intX \left( (\At_{p+1})^*\hw + (\A_1^* + W)(\hw f)\right) d\nu\right)
 \intX \phi\, d\nuw.
\]
Moreover, in view of~\eqref{eq:fortho}, one can subtract~$\left(  \lambda \intX f \hw\,d\nu \right) \left(\intX \phi\, d\nuw\right)$ from the right hand side of last equation. Finally, we obtain the following equation (with unknown $f$): for all $\phi \in \S$, 
\begin{equation}
  \label{eq:fequation}
  \intX \phi \left((\At_{p+1})^* \hw + (\A_1^* +W - \lambda)(\hw f) \right) d\nu = 
  \left(\intX (\At_{p+1})^*\hw + (\A_1^* +W - \lambda)(\hw f) \, d\nu \right) \intX \phi \, d\nuw.
\end{equation}
By Assumption~\ref{as:stability}, the operator $\A_1^* +W-\lambda$ is invertible on $\Shw$ and leaves this space invariant. We can therefore define a solution $f_0$ to the following equation:
\begin{equation}
\label{eq:f0}
\left\{
\renewcommand{\arraystretch}{2.5} 
\begin{array}{l}
\dps (\A_1^* + W  - \lambda)(\hw f_0) = \tilde{g},
\\ \dps
\tilde{g}= - (\At_{p+1})^*\hw + \hw \frac{ \intX \left((\At_{p+1})^* \hw\right)\hh d\nu}
{ \intX\hh \hw d\nu}
\in\Shw. 
\end{array}
\right.
\end{equation}
The function $h_W f_0$ is uniquely defined in~$\Shw$ by Assumption~\ref{as:stability} since 
$\tilde{g}$ has average $0$ with respect to $\nuhw$, and one can check that it is indeed solution 
of~\eqref{eq:fequation}. Since the eigenvector $\hw$ is regular with $\hw >0$, the function 
$f_0$ belongs to $\S$. However, $f_0$ is not a priori of average $0$ with respect to $\nuw$, so 
that condition~\eqref{eq:fortho} is not satisfied. We can however consider the 
function~$f_{\alpha}= f_0 + \alpha$, which is still such that~\eqref{eq:fequation} holds. 
The choice $\alpha=-\intX f_0\,d\nuw$ ensures that~\eqref{eq:fortho} is satisfied. This provides 
the solution~\eqref{eq:leading} and concludes the proof.

%---------------------------------------------------------------
\subsubsection{Proof of Lemma~\ref{lem:step2}}
\label{sec:step2}

We start by considering~\eqref{eq:shortstationarity} and~\eqref{eq:shortapproxstationary} for $\varphi=\Pi_W\phi$ with $\phi\in\S$:
\begin{equation}
\label{eq:stationarities1}
\intX\left[ \left(\frac{\Qdt^W -\e^{\Dt \lambdadt}}{\Dt}\right) 
 \Pi_W \phi\right] d\nuwdt = 0,
\end{equation}
and
\begin{equation}
\label{eq:stationarities2}
 \intX \left[ \left( \frac{\Qdt^W - \e^{\Dt \lambdatdt}}{\Dt}\right) \Pi_W \phi \right]
(1 +\Dt^p f)\, d\nuw = \Dt^{p+1} R_{W,\Dt}\phi.
\end{equation}
We next stabilize the operator in $\Sw$ by another application of the projector $\Pi_W$.
First,
\begin{equation}
\label{eq:intermediate1}
\begin{aligned}
& \intX\left[ \Pi_W \left(\frac{\Qdt^W -\e^{\Dt \lambdadt}}{\Dt}\right) 
\Pi_W \phi\right] d\nuwdt  \\
& \qquad = \intX\left[ \left(\frac{\Qdt^W -\e^{\Dt \lambdadt}}{\Dt}\right) 
\Pi_W \phi\right] d\nuwdt - \intX\left[ \left(\frac{\Qdt^W -\e^{\Dt \lambdadt}}{\Dt}\right) 
\Pi_W \phi\right] d\nuw \\ 
& \qquad = - \dps \intX\left[ 
\left(\frac{\Qdt^W -\e^{\Dt \lambdadt}}{\Dt}\right) 
\Pi_W \phi\right] d\nuw,
\end{aligned}
\end{equation}
thanks to~\eqref{eq:stationarities1}. Second, since $f$ has average~0 with respect to~$\nu_W$,
\[
\begin{aligned}
& \intX \left[\Pi_W \left( \frac{\Qdt^W - \e^{\Dt \lambdatdt}}{\Dt}\right) \Pi_W \phi \right]
(1 +\Dt^p f)\, d\nuw  \\
& \qquad  = \dps \intX \left[\left( \frac{\Qdt^W - \e^{\Dt \lambdatdt}}{\Dt}\right) \Pi_W \phi \right] (1 +\Dt^p f)\, d\nuw - \left(\intX \left[ \left( \frac{\Qdt^W - \e^{\Dt \lambdatdt}}{\Dt}\right) \Pi_W \phi \right] d\nuw\right) \intX (1 + \Dt^p f)  d\nu_W \\
& \qquad = \dps \intX \left[\left( \frac{\Qdt^W - \e^{\Dt \lambdatdt}}{\Dt}\right) \Pi_W \phi \right] (1 +\Dt^p f)\, d\nuw - \intX \left[ \left( \frac{\Qdt^W - \e^{\Dt \lambdatdt}}{\Dt}\right) \Pi_W \phi \right] d\nuw. 
\end{aligned}
\]
In view of~\eqref{eq:stationarities2}, the first term of the right hand side
of the above equation is a remainder of order $\Dt^{p+1}$. Therefore,
\begin{equation}
\label{eq:intermediate2bis}
\begin{aligned}
& \intX \left[\Pi_W \left( \frac{\Qdt^W - \e^{\Dt \lambdatdt}}{\Dt}\right) \Pi_W \phi \right]
(1 +\Dt^p f)\, d\nuw  = - \intX \left[ \left( \frac{\Qdt^W - \e^{\Dt \lambdatdt}}{\Dt}\right) \Pi_W \phi \right] d\nuw 
+ \Dt^{p+1} R_{W,\Dt}\phi
\\ & \qquad = -\intX \left[ \left( \frac{\Qdt^W - \e^{\Dt \lambdadt}}{\Dt}\right) \Pi_W \phi \right] d\nuw 
+ \Dt^{p+1}  R_{W,\Dt}\phi + \left( \frac{\e^{\Dt \lambdatdt} - \e^{\Dt \lambdadt}}{\Dt}\right) \intX  \Pi_W \phi\,  d\nuw 
\\ & \qquad = -\intX \left[ \left( \frac{\Qdt^W - \e^{\Dt \lambdadt}}{\Dt}\right) \Pi_W \phi \right] d\nuw  + \Dt^{p+1} R_{W,\Dt}\phi,
\end{aligned}
\end{equation}
since $\Pi_W \phi$ has average $0$ with respect to $\nuw$. Combining~\eqref{eq:intermediate2bis}
with~\eqref{eq:intermediate1},
\[
\begin{aligned}
\intX\left[ \Pi_W \left(\frac{\Qdt^W -\e^{\Dt \lambdadt}}{\Dt}\right) 
\Pi_W \phi\right] d\nuwdt & =  \intX \left[\Pi_W \left( \frac{\Qdt^W - 
\e^{\Dt \lambdatdt}}{\Dt}\right) \Pi_W \phi \right](1 +\Dt^p f) d\nuw  \\  
& \qquad + \Dt^{p+1}R_{W,\Dt}\phi,
\end{aligned}
\]
where $R_{W,\Dt}$ satisfies~\eqref{eq:uniformbound}. This concludes the proof of the lemma.

%------------ approximate eigenvector -------------
\subsubsection{Proof of Lemma~\ref{lem:step3}}
\label{sec:step3}

The first part of the proof of Lemma~\ref{lem:step3} consists in constructing an approximate eigenvector $\hwdt$ of $\hh$ for the evolution operator $\Qdt^W$. We use to this end Assumption~\ref{as:spectralconsistency} and~\eqref{eq:invariance}, as well as the definition of the leading order correction $f$ in~\eqref{eq:leading}. More precisely, we consider $\hwdt=u_0 + \Dt\, u_1 + \hdots + \Dt^p u_p\in$ and look for functions $u_1,\hdots,u_p\in\Sw$ and $u_0 \in \S$ with $\intX u_0 \, d\nu = 1$ such that
\begin{equation}
\label{eq:eigenQ}
\Qdt^W \hwdt = \e^{\Dt \lambdatdt}\hwdt +\Dt^{p+2} r_{W,\Dt},
\end{equation}
with $\|r_{W,\Dt}\|_{\Linfty} \leq C $ for $0<\Dt\leq \Dt^*$. Recall that, by~\eqref{eq:tildelambdadt},
\[
\e^{\Dt \lambdatdt}=\intX \Qdt^W\mathds{1}(1+\Dt^p f )\, d\nuw.
\]
Expanding the left hand side of~\eqref{eq:eigenQ} using~\eqref{eq:dlsemigroupgeneral} leads to
\begin{equation}
\label{eq:lhs3}
\Qdt^W \hwdt=\sum_{k=0}^{p+1} \Dt^k \At_k \hwdt + \Dt^{p+2}\Rm_{W,\Dt}\hwdt = 
\sum_{k=0}^{p+1} \Dt^{k} \sum_{m=0}^{k} \At_m u_{k-m}
+ \Dt^{p+2}\Rm_{W,\Dt}\hwdt, 
\end{equation}
with the convention $\At_0=\Id$ and $u_{p+1}=0$. The right hand side of~\eqref{eq:eigenQ} can be expanded as
\begin{equation}
\label{eq:rhs3}
\begin{aligned}
& \e^{\Dt \lambdatdt}\hwdt \\
& = \left[\intX \left(1 + \Dt \At_1\mathds{1} + \hdots + \Dt^{p+1} \At_{p+1}\mathds{1} + \Dt^{p+2}\Rm_{W,\Dt}\mathds{1}\right)(1+\Dt^p f)\, d\nuw \right]  \left( u_0 + \Dt\, u_1 + \hdots + \Dt^p u_p\right) \\
& = \dps \left[ 1 + \Dt\intX \At_1\mathds{1}\,d\nuw + \hdots + \Dt^p \intX \At_p\mathds{1}\,d\nuw + \Dt^{p+1}  \intX\left( \At_{p+1}\mathds{1} + f \At_1 \mathds{1}\right)d\nuw   + \Dt^{p+2}r_{W,\Dt} \right] \\ 
& \dps \qquad \qquad \times  \left( u_0 + \Dt\, u_1 + \hdots  + \Dt^p u_p\right) \\ 
& = \sum_{k=0}^{p+1} \Dt^{k} \sum_{m=0}^{k}  \lambda_m u_{k-m} + \Dt^{p+2}r_{W,\Dt},
\end{aligned}
\end{equation}
where we introduced $\lambda_0 = 1$,
\begin{equation}
\label{eq:lambdam1}
\forall\, m\in \{1,\hdots,p\}, \qquad \lambda_m=\intX \At_m \mathds{1}\, d\nuw,
\end{equation}
and $\lambda_{p+1}$ is defined in~\eqref{eq:lambdap1}:
\[
\lambda_{p+1} = \intX \At_{p+1} \mathds{1}\,d\nuw + \intX Wf\, d\nuw.
\]
We see from~\eqref{eq:invariance} that $\lambda_m=a_m$ for $m\in\{1,\hdots,p\}$, with in particular $\lambda_1=\intX W d\nuw=\lambda$.

We now build the functions $u_m$ by induction. Let us show the first steps of the recurrence, before proceeding to the general argument. Plugging~\eqref{eq:lhs3} and~\eqref{eq:rhs3} in~\eqref{eq:eigenQ}, the equality of terms of order~1 leads to the trivial equality $u_0=u_0$. Equating terms of order~$\Dt$ gives
\[
\At_1 u_0 + \At_0 u_1 = \lambda_1 u_0 + \lambda_0 u_1,
\]
so that, using $\At_0=\Id$, $\lambda_0=1$, $\At_1=\A_1+W$ and $\lambda_1=\lambda$,
\[
(\A_1 + W)u_0=\lambda u_0.
\]
In view of Assumption~\ref{as:spectralconsistency}, we can conclude that $u_0=\hh$. The identification of terms of order $\Dt^2$ in~\eqref{eq:lhs3}-\eqref{eq:rhs3} leads to
\[
\At_2 u_0 + \At_1 u_1 + \At_0 u_2  = \lambda_2 u_0 + \lambda_1 u_1 + \lambda_0 u_2,
\]
which can be rewritten as
\begin{equation}
\label{eq:defu1}
(\A_1 + W - \lambda)u_1 = g_{1,0}, \qquad g_{1,0}= - \At_2 \hh + \lambda_2\hh,
\end{equation}
where the expression of $\lambda_2$ is given by~\eqref{eq:lambdam1} when $p \geq 2$ and 
by~\eqref{eq:lambdap1} when $p=1$. In order to prove that~\eqref{eq:defu1} is well-posed, it 
is sufficient to show that $g_{1,0}$ belongs to $\Sw$. We show in fact in the sequel that each 
function~$u_k$ is solution to a Poisson equation similar to~\eqref{eq:defu1} with a right-hand side that always belongs to~$\Sw$. 

Let us now present the inductive construction to any order, until the terminal case $k=p$, showing in particular the well-posedness of the equations defining each mode $u_k$. This construction is reminiscent of techniques used to build the expansion of the invariant probability measure in $\Dt$ in related works, in particular~\cite{debussche2012weak}. Suppose that we have built functions $u_0, \dots, u_{k} \in \Sw$ for some $k\geq 1$. Inserting again~\eqref{eq:lhs3} and~\eqref{eq:rhs3} into~\eqref{eq:eigenQ} and equating terms of order $\Dt^{k+1}$ then leads to
\begin{equation}
\label{eq:ukrec}
 \sum_{m=0}^{k+1} \At_{k+1-m} u_m = \sum_{m=0}^{k+1} \lambda_{k+1-m} u_m.
\end{equation}
For $m=k+1$, we have $\At_0 u_{k+1}=u_{k+1}$ on the left hand side and
$\lambda_0 u_{k+1} = u_{k+1}$ on the right hand side, so that the terms of order
$k+1$ compensate. Taking aside the terms of order $m=k$ leads to the equation:
\begin{equation}
\label{eq:ukeq}
(\A_1 +W - \lambda)u_k = \sum_{m=0}^{k-1} g_{k,m}, 
\qquad 
g_{k,m} = -\At_{k+1-m} u_m + \lambda_{k+1-m} u_m.
\end{equation} 
A sufficient condition for the solution $u_k$ to exist in $\Sw$ is that $g_{k,m}\in\Sw$ 
for $m\in\{0,\hdots,k-1\}$. For $m\in\{1,\hdots,k-1\}$, a sufficient condition for that is 
that $\At_{k+1-m} u_m$ has average~0 with respect to~$\nu_W$, which is 
clear from~\eqref{eq:invariance} and the fact that $u_m \in \Sw$. It therefore only remains 
to show that $g_{k,0}=-\At_{k+1}\hh + \lambda_{k+1}\hh$ belongs to $\Sw$. Two cases have to 
be distinguished here:
\begin{enumerate}[(a)]
\item if $k<p$, then $k+1\leq p$ and we can still use the invariance 
relation~\eqref{eq:invariance} applied to $\phi\equiv \hh$, along with the fact 
that $\lambda_{k+1}=a_{k+1}$:
\[
\intX g_{k,0}\, d\nuw = - \intX (\At_{k+1}\hh)\, d\nuw + \lambda_{k+1} \intX \hh\, d\nuw
= - a_{k+1}\intX \hh\, d\nuw + a_{k+1}\intX \hh\, d\nuw = 0.
\]

\item in the terminal case $k=p$, we cannot use~\eqref{eq:invariance} and $\lambda_{p+1}$
has a different expression (recall~\eqref{eq:lambdap1}). Let us compute this expression
explicitly. In view of~\eqref{eq:leading},
\[
\intX Wf\, d\nuw  =  \intX W f_0 \hw\, d\nu - \left( \intX f_0\, d\nuw \right) \left( \intX W \,d\nuw\right),
\]
and, given that $Wf_0\hw =\tilde{g} + \lambda\hw f_0 - \A_1^* (\hw f_0)$ 
and $\intX W\, d\nuw=\lambda$,
\[
\intX Wf\, d\nuw  =\intX \tilde{g}\, d \nu + \lambda \intX f_0 \hw\, d\nu 
- \intX \A_1^*(\hw f_0) d\nu
- \lambda \intX f_0 \hw\, d\nu.
\]
Since $\A_1\mathds{1}=0$, 
\[
\intX \A_1^*(\hw f_0) d\nu = \intX (\A_1\mathds{1}) \hw f_0\, d\nu = 0.
\]
Finally, using the expression of $\tilde{g}$ in~\eqref{eq:leading} and $\intX \hw\, d\nu=1$,
\[
\begin{aligned}
\intX Wf \,d\nuw & =\dps \intX \tilde{g}\, d\nu = - \intX (\At_{p+1})^*\hw \, d\nu + \intX \hw\, d\nu\, 
\frac{\dps \intX (\At_{p+1}\hh)\, d\nuw}
{\dps \intX \hh\, d\nuw}
\\ & \dps = - \intX \At_{p+1} \mathds{1} \,d\nuw + \frac{\dps \intX (\At_{p+1}\hh)\, d\nuw}
{\dps \intX \hh\, d\nuw}.
\end{aligned}
\]
From this calculation, we obtain, with~\eqref{eq:lambdap1},
\[
\lambda_{p+1}= \intX \At_{p+1}\mathds{1} d\nuw + \intX Wf\, d\nuw =
\frac{\dps \intX \At_{p+1}\hh \, d\nuw}
{\dps\intX \hh\, d\nuw},
\]
so that
\[
\intX g_{p,0}\, d\nuw = - \intX (\At_{p+1}\hh) d\nuw + \lambda_{p+1}\intX \hh\, d\nuw
= 0.
\]
\end{enumerate}
Therefore, for any $k\in\{0,\hdots,p\}$ and any $m\in\{0,\hdots,k-1\}$, it holds $g_{k,m}\in\Sw$. This allows to conclude that the equations~\eqref{eq:ukeq} are well-posed in $\Sw$ and~\eqref{eq:eigenQ} is satisfied.

We are now in position to conclude the proof. Inserting~\eqref{eq:eigenQ} in the stationarity equation~\eqref{eq:stationarity},
\[
\intX \Qdt^W\hwdt\, d\nuwdt= \intX\left( \e^{\Dt \lambdatdt}\hwdt + \Dt^{p+2}r_{W,\Dt}\right) d\nuwdt
= \e^{\Dt \lambdadt} \intX \hwdt\, d\nuwdt,
\]
so that
\[
\e^{\Dt \lambdadt}= \e^{\Dt \lambdatdt} 
+ \Dt^{p+2} \frac{\dps \intX r_{W,\Dt} \,d\nuwdt}{\dps \intX \hwdt\, d\nuwdt}.
\]
At this stage, it suffices to prove that the remainder term is uniformly of order
$\Dt^{p+2}$ for $\Dt$ sufficiently small. We note to this end that
 $\hwdt= \hh + \Dt\, u_1 + \hdots + \Dt^p u_p$, where the functions $u_1,\dots,u_p$ are regular and
$\hh >0$. Given that the state space $\X$ is compact, there exists $\varepsilon >0$ such that
$\hh \geq \varepsilon > 0$. This implies in particular that there exists $\Dt'>0$ 
such that, for any $0<\Dt \leq \Dt'$, it holds $\hwdt \geq \varepsilon/2>0$. We also
know that there exists $\Dt^*>0$ and $C>0$ such that, for any $0<\Dt \leq \Dt^*$, 
it holds $\| r_{W,\Dt} \|_{\Linfty} \leq C$. As a result, for $0<\Dt \leq \min(\Dt',\Dt^*)$,
\[
 \left| \frac{\dps \intX r_{W,\Dt}\, d\nuwdt}{\dps \intX \hwdt\, d\nuwdt}\right|
\leq \frac{\dps \intX |r_{W,\Dt}|\, d\nuwdt}{\dps \intX \hwdt\, d\nuwdt} \leq 
\frac{2C}{\varepsilon},
\]
which gives the claimed result. 

%----------------------------------------------------------------------------------------
\subsubsection{Proof of Lemma~\ref{lem:approxoperator}}
\label{sec:approxoperator}

We follow the strategy outlined in~\cite{leimkuhler2016computation,lelievre2016partial}, which uses a truncated inverse 
series expansion. The first step is to use the expansion of the 
eigenvalue $\e^{\Dt \lambdatdt}$ as in the proof of Lemma~\ref{lem:step3}:
\[
\begin{aligned}
\e^{\Dt \lambdatdt} &= \intX \Qdt^W\mathds{1} (1 +\Dt^p f)\, d\nuw
\\ &= \dps \intX \left(1 + \Dt \At_1\mathds{1} + 
\Dt^2 \At_2\mathds{1} +
\hdots 
+ \Dt^{p+1} \At_{p+1}\mathds{1} + \Dt^{p+2}\Rm_{W,\Dt} \mathds{1}\right)
   (1+\Dt^p f)\,d\nuw 
\\ 
&= 1 + \Dt \lambda + \Dt^2 \lambda_2 + \hdots + \Dt^p \lambda_p
+ \Dt^{p+1} \lambda_{p+1} + \Dt^{p+2} r_{W,\Dt},
\end{aligned}
\]
where the coefficients $\lambda_m$ are defined in~\eqref{eq:lambdam1}-\eqref{eq:lambdap1}, and
there exists $C>0$ such that $|r_{W,\Dt}| \leq C$ for $0<\Dt\leq \Dt^*$. 
This expression, combined with the expansion~\eqref{eq:dlsemigroupgeneral} of $\Qdt^W$ leads to:
\[
\label{eq:operatorAB}
\Pi_W \left( \frac{\Qdt^W - \e^{\Dt \lambdatdt}}{\Dt} \right) \Pi_W = A + \Dt B_{\Dt} + \Dt^{p+1}
R_{W,\Dt}, 
\]
with
\[
\label{eq:defAB}
A= \Pi_W (\A_1 + W - \lambda) \Pi_W, \quad B_{\Dt} =\Pi_W(\At_2 - \lambda_2) \Pi_W + \hdots 
+ \Dt^{p-1}\Pi_W ( \At_{p+1} - \lambda_{p+1} ) \Pi_W.
\]
The operator~$A$ is invertible on $\Sw$ by Assumption~\ref{as:stability}.
Now we are back to the setting of~\cite{leimkuhler2016computation,lelievre2016partial} and 
it suffices to write the formal series expansion of the inverse of 
$A+\Dt B_{\Dt}=(\Id +\Dt B_{\Dt}A^{-1})A$ up to order $p$ by setting 
\[
\widetilde{S}_{\Dt}^W = A^{-1}\sum_{n=0}^p (-1)^n \left(B_{\Dt} A^{-1}\right)^n,
\] 
and then only retaining the terms of order at most $\Dt^{p+1}$ in this expression. More
precisely, denoting $C_k=\Pi_W (\At_k - \lambda_k) \Pi_W$, we find
\[
S_{\Dt}^W=A^{-1} - \Dt A^{-1}C_2A^{-1} + \Dt^2\left( A^{-1}C_2A^{-1}C_2A^{-1} - A^{-1}C_3A^{-1}
\right) +\Dt^3 \mathcal{C}_3 + \hdots + \Dt^p \mathcal{C}_p,
\]
where the operators $\mathcal{C}_k$ are defined using the operators $C_k$ and $A^{-1}$.
The operator $S_{\Dt}^W$ is well defined and leaves $\Sw$ invariant since each 
$\mathcal{C}_k$ consists in a finite number of applications of operators of the form $C_kA^{-1}$ and a final application of
$A^{-1}=\Pi_W (\A_1 + W - \lambda)^{-1} \Pi_W$. It is then easy to check that, by construction, 
the equality~\eqref{eq:approxoperator} is satisfied.

%----------------------------------
\subsection{Proof of Proposition~\ref{prop:directTU}}
\label{sec:proof_directTU}

We first show that, if $\Qdt^W$ satisfies Assumption~\ref{as:minorization} with a reference
probability measure~$\eta$, then $\Qtdt^W$ satisfies Assumption~\ref{as:minorization} with the 
same measure $\eta$. By
Assumption~\ref{as:minorization}, there exist $\varepsilon>0$ and a measure $\eta\in\PX$ such that, for any
bounded measurable nonnegative function~$\varphi$,
\begin{equation}
\label{eq:intineq}
\varepsilon \eta(\varphi) \leq \Qdt^W\varphi \leq \varepsilon^{-1} \eta(\varphi),
\end{equation}
so that, applying $\Udt^W$ on the right of~$\Qdt^W$ and $\Tdt^W$ on the left, 
\[
\varepsilon  \eta(\Udt^W \varphi) \Tdt^W\mathds{1} \leq \Tdt^W \Qdt^W\Udt^W\varphi  
\leq \varepsilon^{-1} \eta( \Udt^W \varphi) \Tdt^W\mathds{1}.
\]
Using~\eqref{eq:minoTUW} leads to
\[
\varepsilon\alpha^2  \eta( \varphi) \leq  \Qtdt^W\varphi
\leq \alpha^{-2} \varepsilon^{-1} \eta(\varphi),
\]
so that $\Qtdt^W$ satisfies Assumption~\ref{as:minorization}. In view of 
Theorem~\ref{theo:feynmankacdiscr}, the scheme $\Qtdt^W$ admits a unique invariant probability measure
$\nuwtdt$ and an eigenvalue $\lambdatdt$ defined by~\eqref{eq:lambdatdt} . Now, 
integrating~\eqref{eq:intineq} with respect to~$\nuwdt$ and using~\eqref{eq:stationarity} gives
\[
\varepsilon \eta(\varphi) \leq\ \e^{\Dt \lambdadt}\nuwdt(\varphi) \leq \varepsilon^{-1} \eta(\varphi).
\]
The same reasoning holds for $\nuwtdt$. There exists therefore $\varepsilon'>0$ for which the following inequalities hold in the sense of positive measures:
\begin{equation}
\label{eq:measurebounds}
\varepsilon' \eta \leq \nuwdt \leq \frac{1}{\varepsilon'} \eta, \qquad \varepsilon' \eta \leq \nuwtdt \leq \frac{1}{\varepsilon'} \eta.
\end{equation}

We are now in position to prove the equality of the eigenvalues $\lambdadt$ and 
$\lambdatdt$ defined respectively by~\eqref{eq:lambdadt} and~\eqref{eq:lambdatdt}.
From~\eqref{eq:stationarity}, it holds, for any $\varphi \in\S$, 
\[
\intX \left(\Qdt^W\right)^n\varphi \, d\nuwdt = \left( \intX \Qdt^W\mathds{1}\, d\nuwdt\right) \left( \intX \left(\Qdt^W\right)^{n-1}\varphi\, d\nuwdt\right) = \left( \intX \Qdt^W\mathds{1}\, d\nuwdt\right)^n \left( \intX \varphi\, d\nuwdt\right).
\]
Applying this last relation to $\Udt^W\varphi$ for $\varphi\in\S$ and using
the definition of $\lambdadt$,
\[
\intX \left(\Qdt^W\right)^n U_{\Dt}^W \varphi \, d\nuwdt = \left( \intX \Qdt^W\mathds{1}\, d\nuwdt\right)^n \left( \intX  U_{\Dt}^W\varphi\, d\nuwdt\right)
=\e^{n\Dt\lambdadt}  \intX  U_{\Dt}^W\varphi\, d\nuwdt.
\]
Similarly,
\[
\intX \left(\Qtdt^W\right)^n\varphi \, d\nuwtdt = \left( \intX \Qtdt^W\mathds{1}\, d\nuwtdt\right)^n \left( \intX \varphi\, d\nuwtdt\right)
=\e^{n\Dt\lambdatdt}  \intX  \varphi\, d\nuwtdt.
\]
It then follows that, for any positive $\varphi\in\S$,
\begin{equation}
\label{eq:exp_n_lambda}
\e^{n\Dt (\lambda_{\Dt}-\lambdatdt)} = \frac{\dps \intX \left(\Qdt^W\right)^n U_{\Dt}^W\varphi \, d\nuwdt}{\dps \intX \left(\Qtdt^W\right)^n\varphi \, d\nuwtdt} \times \frac{\dps \intX \varphi\, d\nuwtdt}{\dps \intX  U_{\Dt}^W\varphi\, d\nuwdt} = \frac{\dps \intX \left(\Qdt^W\right)^n U_{\Dt}^W\varphi \, d\nuwdt}{\dps \intX T_{\Dt}^W \left(\Qdt^W\right)^n U_{\Dt}^W \varphi \, d\nuwtdt} \times \frac{\dps \intX \varphi\, d\nuwtdt}{\dps \intX  U_{\Dt}^W\varphi\, d\nuwdt}.
\end{equation}
It remains to note that the right hand side of~\eqref{eq:exp_n_lambda} is uniformly 
bounded in~$n$. Indeed, denoting by $\psi_n=\left(\Qdt^W\right)^n U_{\Dt}^W\varphi$ 
for a positive $\varphi\in\S$, we obtain using~\eqref{eq:minoTUW} and~\eqref{eq:measurebounds}:
\[
0 \leq \frac{\dps \intX \psi_n \, d\nuwdt}{\dps \intX T_{\Dt}^W \psi_n \, d\nuwtdt} 
\leq  \frac{\dps \intX \psi_n \, d\nuwdt}{\dps \intX\alpha \psi_n \, d\nuwtdt} 
\leq \frac{\dps\intX \psi_n (\varepsilon')^{-1} d\eta}
{\dps \alpha \intX \psi_n \varepsilon' d\eta}
\leq  \frac{1}{\alpha (\varepsilon')^2},
\]
this bound being independant of $n$. Similarly, 
\[
0\leq \frac{\dps \intX \varphi \, d\nuwtdt}{\dps \intX U_{\Dt}^W \varphi \, d\nuwdt} 
\leq \frac{1}{\alpha (\varepsilon')^2}.
\]
Therefore, the right-hand side of~\eqref{eq:exp_n_lambda} is uniformly bounded for all 
$n \geq 0$, which proves that $\lambdadt \leq \lambdatdt$ by taking the limit
$n\to+\infty$. A similar reasoning leads to $\lambdatdt \leq \lambdadt$, hence
$\lambdadt = \lambdatdt$.

%----------------------------------
\subsection*{Acknowledgments}
The authors are grateful to Mathias Rousset for his help in understanding Feynman--Kac models.
We also thank Frédéric Cérou, Jonathan Mattingly, Julien Roussel and Hugo Touchette, Jonathan
Weare for fruitful discussions, and the anonymous referees for their useful comments. The
PhD fellowship of Gr\'egoire Ferr\'e is
partly funded by the Bézout Labex, funded by ANR, reference ANR-10-LABX-58. 
 The work of Gabriel Stoltz was funded in part by the Agence Nationale de la Recherche,
under grant ANR-14-CE23-0012 (COSMOS) and by the European Research Council under the European
Union's Seventh Framework Programme (FP/2007-2013)/ERC Grant Agreement number 614492. We also
benefited from the scientific environment of the Laboratoire International Associ\'e between the
Centre National de la Recherche Scientifique and the University of Illinois at Urbana-Champaign.

\appendix

\section{Markov contractions and Dobrushin coefficients}
\label{sec:dobrushin}

Denoting by $\MX$ is the set of measures over $\X$, we define $\Mz=\{\eta \in\MX \, |\, \eta(\X)=0\}$ the
set of (unsigned) measures with zero mass. The contraction norm of a Markov operator $Q:\PX\to\PX$ is
\[
\label{eq:contractrioncoeff}
\vertiii {Q}  := \underset{\eta \in \Mz }{\sup} \frac{ \| \eta Q \|\TV }
{\| \eta \|\TV } = \underset{\mu,\nu\in\PX}{\sup} \frac{ \| \mu Q -\nu Q \|\TV }
{\| \mu - \nu \|\TV },
\]
the second equality coming from the fact that all elements in $\Mz$ are proportional to the difference of two probability measures. In particular,
\[
\| \mu Q - \nu Q\|\TV \leq \vertiii {Q} \, \| \mu - \nu \|\TV.
\]
A fundamental tool~\cite{del2000branching,del2004feynman,del2001stability} 
for the study of Feynman--Kac type semigroups~\eqref{eq:generaldiscr} and introduced
by Dobrushin~\cite{dobrushin1956centralI,dobrushin1956centralII} is the so-called
 Dobrushin ergodic coefficient, which can be defined for a Markov operator $Q$ as:
\begin{equation}
\label{eq:dobrushin}
\alpha(Q) = \inf_{\substack{ q,q'\in\X\\ \{A_i\}_{1 \leq i \leq m}\subset\X }} \left\{ \sum_{i=1}^m \min \left(  Q(q,A_i), Q(q',A_i)\right) \right\},
\end{equation}
where the infimum in the last equality runs over points $q,q'\in\X$ and all 
partitions $(A_i)_{i=1}^m$ of $\X$. If we interpret $Q(q,A_i)$ as the probability of going 
from $q$ into the set $A_i$, we see that this coefficient provides information on the mixing 
properties of the operator $Q$. The link between this coefficient and the contraction properties 
of $Q$ is made precise by the following 
relationship~\cite{dobrushin1956centralI,dobrushin1956centralII}:
\begin{equation}
\label{eq:contractiondobrushin}
\vertiii{Q} = 1 - \alpha(Q).
\end{equation}
As a result, a minorization condition on $Q$ translates into a contraction of the operator through
its ergodic coefficient $\alpha(Q)$. Relation~\eqref{eq:contractiondobrushin} is essentially
obtained by a Hahn decomposition of measures of zero mass, as made precise 
in~\cite{dobrushin1956centralI,dobrushin1956centralII}. %, Section 3.

%------------------- BIBLIO ------------------------
\bibliographystyle{abbrv}
%\bibliography{bibliography}
%\bibliography{../../../../Bibliography/bibliography}

\end{document}